\def\bt{\begin{thm}}
\def\et{\end{thm}}
\def\bl{\begin{lem}}
\def\el{\end{lem}}
\def\bd{\begin{defi}}
\def\ed{\end{defi}}
\def\bc{\begin{cor}}
\def\ec{\end{cor}}
\def\bp{\begin{proof}}
\def\ep{\end{proof}}
\def\br{\begin{rem}}
\def\er{\end{rem}}
\newtheorem{thm}{Theorem}[section]
\newtheorem{prop}[thm]{Proposition}
\newtheorem{lem}[thm]{Lemma}
\newtheorem{defn}[thm]{Definition}
\newtheorem{rem}[thm]{Remark}
\newtheorem{cor}[thm]{Corollary}
\newtheorem{question}[thm]{Question}
\numberwithin{equation}{section}
\newcommand{\fraction}[2]{\frac{\textstyle #1}{\textstyle #2}}
\newcommand{\vmin}{V_{\theta}}
\newcommand{\Tmin}{T_{\theta}}
\newcommand{\cohomology}{H^{1,1}(X,\mathbb{R})}
\newcommand{\cohomologyp}{H^{p,p}(X,\mathbb{R})}
\newcommand{\psef}{H^{1,1}_{psef}(X,\mathbb{R})}
\newcommand{\nef}{H^{1,1}_{nef}(X,\mathbb{R})}
\newcommand{\bigc}{H^{1,1}_{big}(X,\mathbb{R})}
\newcommand{\energy}{\mathcal{E}(X,\theta)}
\newcommand{\energyc}{\mathcal{E}_{\chi}(X,\theta)}
\newcommand{\energyp}{\mathcal{E}^p(X,\theta)}
\newcommand{\energypc}{\mathcal{E}^p(\alpha)}
\newcommand{\psh}{Psh(X,\theta)}
\newcommand{\jac}{Jac_{\omega}(f)}
\newcommand{\del}{\partial}
\newcommand{\bthm}{\begin{thm}}
\newcommand{\ethm}{\end{thm}}
\newcommand{\bstp}{\begin{stp}}
\newcommand{\estp}{\end{stp}}
\newcommand{\blemma}{\begin{lemma}}
\newcommand{\elemma}{\end{lemma}}
\newcommand{\bprop}{\begin{prop}}
\newcommand{\eprop}{\end{prop}}
\newcommand{\bpf}{\begin{pf}}
\newcommand{\epf}{\end{pf}}
\newcommand{\bdefn}{\begin{defn}}
\newcommand{\edefn}{\end{defn}}
\newcommand{\brk}{\begin{rmrk}}
\newcommand{\erk}{\end{rmrk}}
\newcommand{\bcrl}{\begin{crl}}
\newcommand{\ecrl}{\end{crl}}
\title[Equidistribution towards the Green Current]{Equidistribution towards the Green Current\\ in Big Cohomology Classes}
\author{Turgay Bayraktar}
\date{\today}
\address{Mathematics Department, Johns Hopkins University Baltimore,  21218 MD USA}
\email{bayraktar@jhu.edu}
\keywords{ Equidistribution, Meromorphic map, Green current}
\subjclass[2000]{37F10, 32H50, 32H04}
\begin{document}

\begin{abstract}
We study limiting distribution of the sequence of pull-backs of smooth $(1,1)$ forms and positive closed currents %and pre-images of subvarieties 
by meromorphic self-maps of compact K\"ahler manifolds.
\end{abstract}
%\begin{abstract}
%Let $f:X\to X$ be a dominant meromorphic map of a compact K\"ahler manifold and $\lambda>1$ such that $f^*\alpha=\lambda\alpha$ for some $\alpha\in \psef.$ Under natural dynamical assumptions on $f$, we provide sufficient conditions on positive closed currents in $\alpha$ whose pull-backs converge to the Green current associated to the class $\alpha.$%We consider the asymptotic pre-images of positive closed $(1,1)$ currents under the pull-back operator induced by iterates of $f.$ We provide sufficient conditions on 
%We study the equidistribution problem in codimension one for meromorphic maps of compact K\"ahler manifolds. 
%\end{abstract}

\maketitle
%\tableofcontents
\section{Introduction}
Let $f:\mathbb{P}^1\to \mathbb{P}^1$ be a rational map of degree $\lambda\geq 2$ and $\nu$ be any probability measure on $\mathbb{P}^1.$ A theorem of Brolin \cite{Br}, Lyubich \cite{Lu} and Freire-Lopez-Ma{\~n}{\'e} \cite{FLM} asserts that the sequence of pre-images $\lambda^{-n}(f^n)^*\nu$ converges weakly to the measure of maximal entropy $\mu_f$ if and only if $\nu(\mathcal{E}_f)=0$ where $\mathcal{E}_f$ is an (possibly empty) exceptional set which contains at most two points. Recall that \textit{Green current} $T_f$ for a holomorphic map $f:\Bbb{P}^k \to \Bbb{P}^k$ of degree $\lambda\geq 2$ is defined to be the weak limit of the sequence $\lambda^{-n}(f^n)^*\omega_{FS}$ where $\omega_{FS}$ denotes the Fubini-Study form on $\Bbb{P}^k.$ In \cite{FJ,FJ07} Favre and Jonsson provided a complete characterization of the exceptional set for holomorphic endomorphisms of $\mathbb{P}^2$ stating that there exists an exceptional set $\mathcal{E}_f$ containing at most three lines and finitely many points which are totally invariant such that $\lambda^{-n}(f^n)^*S$ converges weakly to the Green current $T_f$ if and only if the positive closed current $S$ does not put any mass on the components of $\mathcal{E}_f$.  The papers \cite{DS08,Pa} also give sufficient conditions for similar results in higher dimensions. More generally, equidistribution problem has been studied for various category of maps within the last two decades (see for instance \cite{FS,RS,G03,G04,DS06,B}). However, unlike the rational maps of $\Bbb{P}^1,$ for bidegree $(1,1)$ currents there is no general existence theorem for the Green currents nor a complete characterization of the exceptional set of positive closed currents on compact K\"ahler manifolds.\\ \indent   %In the case of rational endomorphisms of $\pk$ various equidistribution results obtained in \cite{FS,RS,G03,G04,DS06}.\\ \indent %proved the existence of a finite collection $\mathcal{E}_{DS}$ of totally irreducible algebraic subsets of $\pk$ such that if $S$ does not charge any component of $\mathcal{E}_{DS}$ then $d^{-n}(f^n)^*S$ converges weakly to the Green current $T_f.$ Later, Parra \cite{Pa} obtained finer results in this direction.     
 %In the case of algebraically stable rational endomorphisms of $\Bbb{P}^k,$ Russakovskii and Shiffman \cite{RS} proved that there exists a pluripolar set $\mathcal{E}\subset (\Bbb{P}^k)^*$ such that for every hyperplane $H \not\in \mathcal{E}$ $$\lambda^{-n}(f^n)^*([H]-\omega_{FS})\to 0$$
 %in the sense of currents, %equidistributes towards the Green current $T_f$ 
   %Similar results were obtained in \cite{DS06} when the ambient space is an arbitrary projective manifold. On the other hand, Guedj \cite{G03} showed that for any positive closed $(1,1)$ current $S$ on $\Bbb{P}^k$ with identically zero Lelong numbers $\lambda^{-n}(f^n)^*S$ converges weakly to the Green current $T_f.$\\ \indent
 The purpose of this work is to study the limiting distribution of smooth bidegree $(1,1)$ forms and positive closed currents under the pull-back operator induced by meromorphic endomorphisms of compact K\"ahler manifolds.\\ \indent  %prove analogous results in the setting of meromorphic mappings of compact K\"ahler manifolds.
 Our setting as follows. Let $X$ be a compact K\"ahler manifold of dimension $k$ and $f:X\to X$ be a dominant meromorphic map. Then $f$ induces a linear map $f^*:\cohomology \to~\cohomology$ which is, in general, not compatible with the dynamics of $f.$  We say that $f$ is \textit{1-regular} if $(f^n)^*=(f^*)^n$ for each $n=1,2\dots$ In the sequel, we assume that $f$ is 1-regular and $\lambda_1(f)>1$ where $\lambda_1(f)$ denotes the spectral radius of $f^*.$ Recall that a class $\alpha \in \cohomology$ is called pseudo-effective (psef) if it can be represented by a positive closed current, we say that $\alpha$ is big if it can be represented by a strictly positive current. The set of psef classes forms a salient closed convex cone. It follows from a Perron-Frobenius type argument that there exists a class $\alpha \in \psef$ such that $f^*\alpha=\lambda_1(f)\alpha$. In general, the class $\alpha$ can not be represented by a smooth positive form (see \cite{B} for examples of such  mappings). %It follows that under a natural dynamical assumption (see Theorem \ref{B}) the sequence $\frac{1}{\lambda_1(f)^{n}}(f^n)^*\theta$ converges to a positive closed current $T_{\alpha}$ for every smooth representative $\theta$ of $\alpha.$  
  \begin{thm}\label{A}
Let $f:X\to X$ be a 1-regular dominant meromorphic map such that $\lambda:=\lambda_1(f)>1$ is a simple eigenvalue of $f^*$ with $f^*\alpha=\lambda\alpha$ . If $\alpha \in \psef$ can be represented by a positive current with identically zero Lelong numbers then there exists a positive closed current $T_{\alpha}\in \alpha$ such that for every smooth form $\theta\in \alpha$ $$\frac{1}{\lambda^n}(f^n)^*\theta \to T_{\alpha}$$ in the sense of currents as $n\to\infty.$ Moreover, for every positive closed current $S\in \alpha$ with identically zero Lelong numbers on $X$ $$\frac{1}{\lambda^n}(f^n)^*S \to T_{\alpha}$$ in the sense of currents as $n\to\infty$.
\end{thm}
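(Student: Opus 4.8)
The strategy is to use a $dd^c$-potential approach, exploiting the $1$-regularity assumption which guarantees $(f^n)^* = (f^*)^n$ on cohomology, together with the simplicity of the eigenvalue $\lambda$. Fix a smooth form $\theta \in \alpha$. Since $f^*\alpha = \lambda\alpha$, the form $\frac{1}{\lambda}f^*\theta$ is smooth (away from the indeterminacy set, but its class is $\alpha$) and cohomologous to $\theta$, so there is a quasi-potential $u_1$ with $\frac{1}{\lambda}f^*\theta = \theta + dd^c u_1$; iterating and using $1$-regularity, $\frac{1}{\lambda^n}(f^n)^*\theta = \theta + dd^c\bigl(\sum_{j=1}^{n} \frac{1}{\lambda^{j}} u_1\circ f^{j-1}\bigr)$ up to the usual care with indeterminacy. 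The plan is to show the telescoping sum $U_n := \sum_{j\ge 1}^n \lambda^{-j} u_1\circ f^{j-1}$ converges in $L^1(X)$: the maps $f^j$ have controlled volume growth (the mass of $\frac{1}{\lambda^n}(f^n)^*\theta$ is $\lambda^{-n}\langle (f^*)^n\alpha, \omega^{k-1}\rangle = \langle \alpha,\omega^{k-1}\rangle$, bounded), and $\|u_1\circ f^{j-1}\|_{L^1}$ grows at most sub-exponentially in $j$ relative to $\lambda$ because $\lambda>1$ is the \emph{spectral radius}; the geometric factor $\lambda^{-j}$ then forces convergence. The limit $U_\infty \in L^1(X)$ gives $T_\alpha := \theta + dd^c U_\infty$, a positive closed current (positivity by passing to the limit of the positive currents $\frac{1}{\lambda^n}(f^n)^*\theta$), and $T_\alpha$ does not depend on the choice of $\theta$ since two choices differ by $dd^c$ of a bounded quantity whose pull-backs are killed by $\lambda^{-n}$.

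For the second statement, let $S \in \alpha$ be a positive closed current with identically zero Lelong numbers. Write $S = \theta + dd^c \varphi$ for a $\theta$-psh function $\varphi$. Then $\frac{1}{\lambda^n}(f^n)^*S = \frac{1}{\lambda^n}(f^n)^*\theta + dd^c\bigl(\frac{1}{\lambda^n}\varphi\circ f^n\bigr)$, so by the first part it suffices to prove $\frac{1}{\lambda^n}\varphi\circ f^n \to 0$ in $L^1(X)$. This is where the \emph{zero Lelong number} hypothesis is essential: by a Demailly-type regularization / comparison argument (or directly, via Skoda's integrability theorem and the fact that a quasi-psh function with zero Lelong numbers everywhere has $e^{-\epsilon\varphi} \in L^1_{loc}$ for all $\epsilon>0$, giving uniform integrability under pull-back), one controls $\int_X |\varphi\circ f^n|\, \frac{1}{\lambda^n}(f^n)^*\omega^{k-1}\wedge\omega$ and hence $\|\varphi\circ f^n\|_{L^1}$ grows sub-exponentially compared to $\lambda^n$. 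Dividing by $\lambda^n$ gives the claim. Note that for the smooth-form statement the hypothesis on $\alpha$ having a representative with zero Lelong numbers enters precisely to guarantee $T_\alpha$ is well-defined as a genuine current in the class (and to start the comparison estimates), while for arbitrary $S$ we need it for $S$ itself.

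The main obstacle is the analytic control of pull-backs near the indeterminacy set $I(f)$: the potentials $u_1\circ f^{j-1}$ and $\varphi\circ f^n$ are a priori badly behaved there, and the identity $\frac{1}{\lambda^n}(f^n)^*\theta = \theta + dd^c U_n$ must be justified as an equality of currents on all of $X$, not just on $X\setminus I(f^n)$ — one uses that $U_n$ extends across the (codimension $\ge 2$) indeterminacy locus by its $L^1$ bound, invoking the extension theorem for quasi-psh functions across small sets. The second genuine difficulty is the quantitative estimate $\|\varphi\circ f^n\|_{L^1} = o(\lambda^n)$: this requires combining the volume-growth bound (which is exactly the conservation of mass coming from $1$-regularity) with a uniform Skoda-type integrability estimate that is uniform along the orbit, and this is the step where the hypothesis that \emph{all} Lelong numbers vanish — not merely those at generic points — is used in full strength.
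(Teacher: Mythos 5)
There is a genuine gap, in both halves, and it is the same one: you never establish (or invoke) a quantitative volume-contraction estimate for the iterates, which is where all the real work in this theorem lies. For the first part, your telescoping series $U_n=\sum_{j\le n}\lambda^{-j}u_1\circ f^{j-1}$ does not converge for the reason you give. The potential $u_1$ of $\frac{1}{\lambda}f^*\theta-\theta$ is an unbounded (difference of quasi-psh) function, and the general bound for such compositions is $\|u_1\circ f^{j-1}\|_{L^1}\le C\,\delta_1(f^{j-1})\asymp\lambda^{j}$ (Guedj's estimate, used as \cite[Proposition 1.3]{G04} in the paper); this is sharp in general, so the terms $\lambda^{-j}u_1\circ f^{j-1}$ are merely \emph{bounded} in $L^1$, not summable, and "$\lambda$ is the spectral radius'' gives no sub-exponential gain. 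This is exactly why no general existence theorem for Green $(1,1)$ currents of meromorphic maps is available: the paper does not reprove existence but quotes its Theorem 4.1 (the author's earlier result), which requires the dynamical condition $(\star)$: $\lambda^{-n}V_\theta\circ f^n\to 0$ in $L^1$. Verifying $(\star)$ under the zero-Lelong-number hypothesis is the actual content, and your sketch never uses that hypothesis in the first part in any checkable way.

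For the second part your reduction to $\lambda^{-n}\varphi\circ f^n\to0$ in $L^1(X)$ coincides with the paper's, but the proof you propose for it is circular at the decisive step: Skoda integrability of $\varphi$ (from vanishing Lelong numbers) only controls $Vol(\{\varphi<-s\})$, and says nothing about $Vol(\{\varphi\circ f^n<-t\lambda^n\})$ unless you can transfer volumes through $f^n$. The paper does this with Theorem 3.1: $Vol(f^n(\Omega))\ge (C_1 Vol(\Omega))^{C_2\lambda^n}$, proved by writing $\log|Jac_\omega(f^n)|$ as a difference of quasi-psh functions with $dd^c\ge -c\lambda^n\omega$ and applying a uniform Skoda estimate; combined with the inclusion $f^n(\{\varphi\circ f^n<-t\lambda^n\})\subset\{\varphi<-t\lambda^n\}$ and Kiselman's estimate $Vol(\{\varphi<-s\})\le A_Be^{-Bs}$ with $B$ arbitrarily large (this is where all Lelong numbers being zero is used), it forces $Vol(\{\varphi\circ f^n<-t\lambda^n\})\to0$; uniform integrability of $\{\lambda^{-n}\varphi\circ f^n\}$ (Corollary 3.4, itself a consequence of the same volume estimate) then upgrades this to $L^1$ convergence. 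Your phrase "uniform Skoda-type integrability estimate that is uniform along the orbit'' gestures at this but does not supply the Jacobian decomposition or the contraction inequality, and without them the claimed bound $\|\varphi\circ f^n\|_{L^1}=o(\lambda^n)$ is simply the conclusion restated. To repair the proposal you would need to prove (or cite) the volume estimate and the prior existence theorem, at which point your argument becomes the paper's.
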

The current $T_{\alpha}$ is called \textit{Green current} associated to the class $\alpha$. Recall that any $\alpha\in \bigc\cap \nef$ can be represented by a positive closed current with identically zero Lelong numbers on $X$ (\cite{DPS01,G04}). If the invariant class is merely big we let $\energypc$ denote the energy classes (see section \ref{ener} for details) of positive closed currents whose de Rham class is $\alpha$. 
\begin{prop}
Let $(X,f)$ be as in Theorem \ref{A}. If $\alpha \in \bigc$ then for every $S \in \energypc$ with $p>\dim X-1$
$$\frac{1}{\lambda^n}(f^n)^*S \to T_{\alpha}$$ in the sense of currents.
\end{prop}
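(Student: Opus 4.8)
The plan is to deduce the proposition from Theorem~\ref{A} by truncating $S$ against currents of minimal singularities and then controlling the resulting error uniformly in $n$, the uniform control being where the energy hypothesis $p>\dim X-1$ is used. Fix a smooth form $\theta\in\alpha$ and write $S=\theta+dd^c\varphi$ with $\varphi\in\energyp$ a $\theta$-plurisubharmonic function, normalized by $\sup_X\varphi=0$; let $V_\theta$ be the minimal-singularity potential of $\alpha$, i.e.\ the upper semicontinuous upper envelope of the $\theta$-plurisubharmonic functions $\le0$. Since $(X,f)$ is as in Theorem~\ref{A}, $\alpha$ is represented by a positive closed current with identically zero Lelong numbers, so $V_\theta$ has identically zero Lelong numbers as well. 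For $N>0$ put $\varphi_N:=\max(\varphi,V_\theta-N)$: then $\varphi_N$ is $\theta$-plurisubharmonic, $\varphi\le\varphi_N\le V_\theta$, $\varphi_N\downarrow\varphi$ as $N\to\infty$, and $\varphi_N$ has identically zero Lelong numbers, the Lelong number of a maximum being the minimum of the two Lelong numbers. Hence $S_N:=\theta+dd^c\varphi_N$ is a positive closed current in $\alpha$ with identically zero Lelong numbers, and Theorem~\ref{A} gives $\lambda^{-n}(f^n)^*S_N\to T_\alpha$ as $n\to\infty$ for each fixed $N$.

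To handle the error, set $h:=V_\theta-\varphi\ge0$, so that $\varphi-\varphi_N=-(h-N)_+$ and $|\varphi-\varphi_N|\le h\,\mathbf 1_{\{h>N\}}$. Then
$$\frac1{\lambda^n}(f^n)^*S=\frac1{\lambda^n}(f^n)^*S_N+dd^c\rho_{n,N},\qquad \rho_{n,N}:=\frac1{\lambda^n}(\varphi-\varphi_N)\circ f^n,$$
and, fixing a Kähler form $\omega$ on $X$ and writing $k:=\dim X$,
$$\|\rho_{n,N}\|_{L^1(X,\omega^k)}\ \le\ \frac1{\lambda^n}\int_{\{h>N\}}h\;(f^n)_*\omega^k .$$
It therefore suffices to establish the uniform-integrability estimate
$$\lim_{N\to\infty}\ \sup_{n\ge1}\ \frac1{\lambda^n}\int_{\{h>N\}}h\;(f^n)_*\omega^k\ =\ 0 .$$
Indeed, for any smooth test form $\Phi$ one has $|\langle dd^c\rho_{n,N},\Phi\rangle|\le\|\rho_{n,N}\|_{L^1}\,\|dd^c\Phi\|_{L^\infty}$, which is small uniformly in $n$ once $N$ is large; combined with $\lambda^{-n}(f^n)^*S_N\to T_\alpha$ for that fixed $N$, an $\varepsilon/3$ argument gives $\lambda^{-n}(f^n)^*S\to T_\alpha$.

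The displayed estimate is the crux, and it is here that $p>\dim X-1$ enters. The pushforwards $(f^n)_*\omega^k$ have constant total mass $\int_X\omega^k$ but may concentrate, whereas $h$ is controlled only through the weighted energy $\int_X h^{p}\,\langle(\theta+dd^c\varphi)^{k}\rangle<\infty$ (the non-pluripolar Monge--Amp\`ere of $S$) afforded by $\varphi\in\energyp$; bridging the two is a mixed-energy inequality. One first uses $1$-regularity together with the fact that the indeterminacy locus of $f^n$ has codimension $\ge2$ to see that no divisorial mass is created under pull-back, so that $[(f^n)^*S]=\lambda^n\alpha$ and $\lambda^{-n}(f^n)^*S$ is a genuine positive closed current in $\alpha$. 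One then transfers integrals of (powers of) $h\circ f^n$ against $\omega^k$ to integrals against $\langle(\theta+dd^c\varphi)^{k}\rangle$ by repeated integration by parts, at the cost of mixed masses of the pull-backs $(f^n)^*\omega$ whose growth is governed by the dynamical degrees of $f$; the margin of integrability provided by $p>\dim X-1$ is exactly what absorbs the borderline term in this bookkeeping after division by $\lambda^n$ and yields a constant uniform in $n$ that vanishes as $N\to\infty$, the value $p=\dim X-1$ being the threshold below which the scheme fails. The main obstacle is making the integration-by-parts identities rigorous for the singular potentials at hand and controlling the accompanying constants; once the estimate is proved, the proposition follows.
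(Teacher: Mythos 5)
Your truncation scheme (set $\varphi_N=\max(\varphi,V_\theta-N)$, apply Theorem~\ref{A} to each $S_N$, and control the error $\rho_{n,N}=\lambda^{-n}(\varphi-\varphi_N)\circ f^n$ by an $\varepsilon/3$ argument) is a legitimate and clean \emph{reduction}, and the first two sentences are correct: $V_\theta$ does have identically zero Lelong numbers under the hypotheses of Theorem~\ref{A}, and $S_N\in\alpha$ has zero Lelong numbers. But you have reduced the proposition to exactly the hard estimate, namely uniform (in $n$) decay of $\lambda^{-n}\int_{\{h\circ f^n>N\}}h\circ f^n\,dV$ as $N\to\infty$ where $h=V_\theta-\varphi$, and the paragraph you offer for it does not prove it. You announce a ``mixed-energy inequality'' obtained by ``repeated integration by parts'' transferring $\omega^k$ to $\langle(\theta+dd^c\varphi)^k\rangle$ at the cost of mixed masses of $(f^n)^*\omega$. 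No such inequality is stated, and the mechanism you describe is the wrong one: the mixed masses of $(f^n)^*\omega$ with fixed classes grow like $\lambda^n$, so dividing by $\lambda^n$ only gives an $O(1)$ bound, not something vanishing in $N$; and integration by parts against the very singular currents $(f^n)^*\omega$ (whose potentials blow up along $I_{f^n}$) is not even formally set up. In short, the reduction is the easy part, and the crux is asserted rather than proved.

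The paper's actual mechanism is quite different and worth contrasting. It never compares $h$ against pushed-forward volume directly; instead it establishes (Proposition~\ref{cap}) that convergence holds whenever $t^k\,Cap_\theta(\varphi-V_\theta<-t)\to0$, and then shows (Lemma~\ref{tcap}) that the energy hypothesis forces $Cap_\theta(\varphi-V_\theta<-t)\le C_\varphi/(t\,|\chi(-t)|)$, so $p>k-1$ delivers the required $o(t^{-k})$ decay. The chain for Proposition~\ref{cap} is: Skoda's uniform integrability theorem applied to the extremal function $V^*_{K_t,\theta}$ converts a lower bound on $M_\theta(K_t)=\sup_X V^*_{K_t,\theta}$ into an exponential bound on $Vol(K_t)$; the lower bound $M_\theta(K_t)\ge(vol(\alpha)/Cap_\theta(K_t))^{1/k}$ comes from testing the capacity with $M_\theta(K_t)^{-1}V^*_{K_t,\theta}+(1-M_\theta(K_t)^{-1})V_\theta-1$; and the resulting volume decay of $\{h>s\}$ is then fed into the Jacobian volume-contraction estimate (Theorem~\ref{vol}) to get $Vol(\{h\circ f^n>t\lambda^n\})\to0$. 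The capacity bound in Lemma~\ref{tcap} uses the BEGZ comparison principle together with the Berman--Demailly regularity result that $MA(V_\theta)$ has $L^\infty$ density, which is what makes the ``$A\exp(-Bt)$'' term in that lemma appear. None of these ingredients---the extremal function, Skoda's estimate, the comparison principle, the $L^\infty$-density of $MA(V_\theta)$, or Theorem~\ref{vol}---appear in your sketch of the key step, and they are not replaceable by integration-by-parts bookkeeping of dynamical degrees. So while your reduction is fine and arguably more transparent than the paper's direct argument, the decisive estimate is missing: as written this is a genuine gap, not a complete alternative proof.
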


%In the case of minimally separating maps (see section \ref{min} for the definition), we have the following:
%\begin{thm}
%Let $f:X\to X$ be a minimally separating bimeromorphic map. Assume that $\lambda>1$ and simple and $\alpha \in \bigc.$ If $\mathcal{I_1^-} \subset Amp(\alpha)$ then for every smooth closed $(1,1)$ form $\eta$ on $X$ 
%$$\frac{1}{\lambda^n}(f^n)^*\eta \to cT_{\alpha}$$ where $c>0$ depends only on $\{\eta\}$. Moreover, if the ambient space $X$ is projective then $$\frac{1}{\lambda^n}(f^n)^*[H] \to T_{\alpha}$$ for Lebesgue almost every hyperplane section $H.$
%\end{thm}

Finally, %we investigate the relationship between cohomological properties of  $f^*$-invariant classes and the ambient space $X.$ A theorem of Diller and Favre \cite{DF} asserts that a bimeromorphic map $f$ of a compact K\"ahler surface with $\lambda_1(f)>1$ is bimeromorphically conjugate to an automorphism if and only if the invariant class is not big. We provide a family of birational maps $f_{\tau}$ of a rational threefold $X_{\tau}$ such that $f_{\tau}$ is 1-regular, $\lambda_1(f_{\tau})>1$ is simple and the corresponding normalized invariant class is not big. However, the map $f_{\tau}$ is not bimeromorphically conjugate to an automorphism.\\ \indent  Finally, 
we consider the problem of which meromorphic maps admit a big invariant class. We prove that in the case of dimension two, only rational maps have this property. This improves the corresponding result of \cite{DF} which was proved in the case of bimeromorphic maps.
\begin{thm}\label{class}
Let $X$ be a compact K\"ahler surface and $f:X\to X$ be a dominant 1-regular meromorphic map. If $\lambda_1(f)>d_{top}(f)$ and there exists $\alpha\in \bigc$ such that $f^*\alpha=\lambda_1(f)\alpha$ then $X$ is a rational surface.
\end{thm}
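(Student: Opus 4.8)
\emph{Proof strategy.} The plan is to argue by contradiction via the Enriques--Kodaira classification of compact K\"ahler surfaces. Recall first that the dynamical degrees $\lambda_1(f)$ and $d_{top}(f)=\lambda_2(f)$ are bimeromorphic invariants of $f$. If $X$ were not rational it would fall into one of the families: (a) $\kappa(X)=2$; (b) $\kappa(X)=1$; (c) $\kappa(X)=0$; (d) $\kappa(X)=-\infty$ with $X$ birationally ruled over a curve of genus $\ge 1$. I would eliminate each in turn, the hypothesis $\lambda_1(f)>d_{top}(f)$ entering differently in the different cases.

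Cases (a), (b), (d) are comparatively soft. If $\kappa(X)=2$, then every dominant meromorphic self-map of $X$ is bimeromorphic --- it descends to an automorphism of the canonical model of $X$, whose automorphism group is finite --- so $f$ has finite order and $\lambda_1(f)=1$, contradicting $\lambda_1(f)>d_{top}(f)\ge1$. In cases (b) and (d) the surface carries a distinguished fibration $\pi\colon X\to B$ over a smooth curve $B$ --- the Iitaka elliptic fibration in case (b), the Albanese fibration (with rational fibres) in case (d) --- which, being canonically attached to $X$, is $f$-equivariant: $\pi\circ f=g\circ\pi$ for a dominant meromorphic $g\colon B\to B$. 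I would then invoke the product formula for dynamical degrees of semi-conjugate meromorphic maps (Dinh--Nguyen): writing $b:=\lambda_1(g)=\deg g\in\mathbb{Z}_{\ge1}$ and $e:=\lambda_1'(f)\in\mathbb{Z}_{\ge1}$ for the relative first dynamical degree of $f$ over $\pi$ (an integer since the fibres are curves), one gets $\lambda_1(f)=\max(b,e)$ while $d_{top}(f)=\lambda_2(f)=be$. Since $\max(b,e)\le be$ for positive integers, this yields $\lambda_1(f)\le d_{top}(f)$ --- a contradiction.

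The case $\kappa(X)=0$ is the heart of the argument. I would pass to the unique minimal model $\mu\colon X\to X_{\min}$, where $X_{\min}$ is a torus, a K3, an Enriques or a bielliptic surface and $K_{X_{\min}}$ is numerically trivial. First one checks that $f$ descends to a \emph{holomorphic} self-map $f_{\min}\colon X_{\min}\to X_{\min}$: since $K_{X_{\min}}\equiv 0$, a pluricanonical divisor on $X$ is supported exactly on the $\mu$-exceptional locus, $f$ necessarily carries that locus into itself, and a rational map to a minimal non-ruled surface is a morphism. Because $K_{X_{\min}}\equiv 0$, the ramification divisor $R$ of $f_{\min}$ satisfies $R\equiv 0$, and being effective it must vanish; thus $f_{\min}$ is finite \'etale of some degree $d=d_{top}(f)$. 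Next, $\alpha_{\min}:=\mu_*\alpha$ is big on $X_{\min}$, and --- because $f_{\min}$ contracts no curve, so that the $\mu$-exceptional correction terms vanish --- it still satisfies $f_{\min}^*\alpha_{\min}=\lambda\,\alpha_{\min}$ with $\lambda=\lambda_1(f)$. Finally, pullback by a finite \'etale morphism multiplies the volume of any $(1,1)$-class by the degree, so $\lambda^2\,\mathrm{vol}(\alpha_{\min})=\mathrm{vol}(f_{\min}^*\alpha_{\min})=d\,\mathrm{vol}(\alpha_{\min})$; since $\alpha_{\min}$ is big, $\mathrm{vol}(\alpha_{\min})>0$, which forces $\lambda^2=d=d_{top}(f)$ --- impossible because $\lambda_1(f)>d_{top}(f)\ge1$. (On a torus one may argue more directly that the spectral-radius eigenspace of $f_{\min}^*$ on $H^{1,1}$ is a line spanned by a rank-one semipositive form, hence not big; and when $d_{top}(f)=1$ the assertion is precisely the bimeromorphic case of \cite{DF}.) With every non-rational possibility excluded, $X$ must be rational.

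I expect the genuine difficulty to lie entirely in the case $\kappa(X)=0$, and in two points specifically: (i) making the descent of $f$, and of the big invariant class $\alpha$, to the minimal model precise while preserving the eigenvalue relation $f_{\min}^*\alpha_{\min}=\lambda\alpha_{\min}$; and (ii) controlling the volume (equivalently, the mobile self-intersection) of a class under the finite \'etale map $f_{\min}$, since this is the mechanism by which bigness of $\alpha$ collides with $\lambda_1(f)>d_{top}(f)$. The remaining cases reduce to the finiteness of $\mathrm{Bir}$ for surfaces of general type and to the product formula for fibred maps, and there $\lambda_1(f)>d_{top}(f)$ is used only through the elementary inequality $\max(b,e)\le be$.
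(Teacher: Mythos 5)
Your skeleton agrees with the paper's: reduce to Kodaira dimension zero (the paper simply quotes Guedj's theorem that $\lambda_1(f)>d_{top}(f)$ forces $kod(X)=0$ or $X$ rational, whereas you re-derive this via finiteness of $\mathrm{Bir}$ in general type and the Dinh--Nguyen product formula for the fibred cases --- that part is fine), and in the $\kappa=0$ case use numerical triviality of $K$ to kill ramification and then play a degree-two homogeneity count ($\lambda_1^2$ versus $d_{top}$) against the invariant big class. But your implementation of the key case has a genuine gap: you need the induced self-map $f_{\min}$ of the minimal model to be a \emph{holomorphic} finite \'etale map. The classical fact you invoke ("a rational map to a minimal non-ruled surface is a morphism") is a statement about \emph{birational} maps; for dominant maps of degree $\ge 2$ it is false in general, and in fact on K3 and Enriques minimal models no \'etale (indeed no non-isomorphic surjective holomorphic) self-map of degree $\ge 2$ can exist, since $\chi_{top}\neq 0$ and multiplicativity of the Euler characteristic under \'etale covers force $d=1$. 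Yet dominant rational self-maps of degree $\ge 2$ of such surfaces do exist (e.g.\ on Kummer K3s, induced by isogenies of the torus); these necessarily have indeterminacy, so precisely in the hardest subcase your \'etale-plus-volume-multiplicativity mechanism has nothing to act on. (For torus and bielliptic minimal models, which carry no rational curves, holomorphicity of $f_{\min}$ does hold and your argument goes through.) The paper avoids this by never leaving the meromorphic category: Riemann--Hurwitz at the level of classes, $K_X=R_f+f^*K_X$ with $K_X$ numerically torsion, gives $R_f=0$, hence $f$ contracts no curves; the Diller--Favre push-pull formula then has no exceptional correction terms, so $f_*f^*=d_{top}\,\mathrm{id}$ and $\lambda_1^2\,\alpha^2=(f^*\alpha)^2=d_{top}\,\alpha^2$, forcing $\alpha^2=0$; since one may take $\alpha$ nef, $vol(\alpha)=\alpha^2=0$, contradicting bigness. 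That is the same numerics as your volume count, but carried by the graph pullback rather than by a finite \'etale map that need not exist.

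A second, lesser gap is the descent of the eigenvector equation to the minimal model: you assert $f_{\min}^*(\mu_*\alpha)=\lambda\,\mu_*\alpha$ "because the exceptional correction terms vanish", but this is exactly where the paper spends its effort in the non-minimal case --- it uses $1$-regularity of the induced map $g$ on $X'$ (obtained from emptiness of its exceptional set), simplicity of $\lambda_1$ for $g^*$ (Diller--Favre), and a limiting argument $\lambda_1^{-n}(f^n)^*E\to c_1\alpha_f$ for the exceptional classes, and then transfers bigness through $\langle\pi_*\alpha_f,\pi_*\alpha_f\rangle\ge\langle\alpha_f,\alpha_f\rangle$. You flag this as difficulty (i), which is the right instinct, but as written it is not proved, and proving it along your lines would still presuppose the holomorphicity discussed above.
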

The outline of the paper as follows: in section 2 we provide background, in section 3 we study contraction of volume of a set under the iteration by a meromorphic endomorphism, in section 4 and 5 we focus on equidistribution problem for various category of maps. For instance, in the algebraic case if the expanded class is the first Chern class of a finitely generated big line bundle then we prove that Green current describes the limiting distribution of pre-images of zero divisor of generic global holomorphic section (Theorem \ref{finite}). Finally, in section 6 we prove Theorem \ref{class}.

\section{Preliminaries}\label{prelim}
Let $X$ be a connected compact K\"ahler manifold with $\dim_{\mathbb{C}}X=k.$ We fix a K\"ahler form $\omega$ on $X$ such that $\int_X \omega^k=1.$ All volumes will be computed with respect to the probability volume form $dV:=\omega^k$. Let $H^{1,1}(X)$ denote the Dolbeault cohomology group and let $H^2(X,\mathbb{Z})$, $H^2(X,\mathbb{R})$ and $H^2(X,\mathbb{C})$ denote the de Rham cohomology groups with coefficients in $\mathbb{Z}, \mathbb{R}, \mathbb{C}.$ We also set $$H^{1,1}(X,\mathbb{R}):=H^{1,1}(X) \cap H^2(X,\mathbb{R}).$$ 
We say that a cohomology class $\alpha$ is \textit{pseudo-effective} (psef) if $\alpha$ ca be represented by a positive closed $(1,1)$ current. We say that $\alpha$ is \textit{big} if there exists a \textit{K\"ahler current} in $\alpha$ that is there exists $T_+\in \alpha$ such that $T_+\geq \epsilon\omega$ for some small $\epsilon>0$. By Demailly's approximation theorem \cite{Dem} we may assume that $T_+$ has analytic singularities that is locally $T_+$ can be written as $$T_+=dd^c(u+\frac{c}{2}\log \sum_{j=1}^N|f_j|^2)$$ where $d=\partial+\bar{\partial},\ d^c=\frac{i}{2\pi}(\bar{\partial}-\partial),\ c>0$, $u$ is a smooth function and $f_j's$ are local holomorphic functions. Thus, $T_+$ is smooth on a Zariski open set. The \textit{Ample locus}, $Amp(\alpha)$ of  a big class $\alpha$ is defined to be the largest such Zariski open subset. In fact, by the Noetherian property of analytic subsets there exists a K\"ahler current $T_K \in \alpha$ with analytic singularities such that $Amp(\alpha)=X\setminus E_+(T_K)$ where $E_+(T_K)=\{x\in X: \nu(T_K,x)>0\}$ and $\nu(T_K,x)$ denotes the Lelong number of $T_K$ at $x\in X$ \cite{Bou}. Roughly speaking, the set $Amp(\alpha)$ is the largest Zariski open set where the class $\alpha$ behaves like a K\"ahler class. In fact, $\alpha$ is K\"ahler if and only if $Amp(\alpha)=\emptyset$ (see \cite{Bou} for details). The set of psef classes, $\psef$ is a salient closed convex cone. The set of big classes, $\bigc$ is an open convex cone which coincides with the interior of $\psef.$ %We stress that these notions coincide with the classical ones in algebraic geometry if the ambient space is projective and $\alpha=c_1(L)$ is the first Chern class of a holomorphic line bundle \cite{Dem}.\\ \indent

Let $\alpha \in \psef$ and $\theta$ be a fixed smooth representative of $\alpha.$ An upper semi continuous function $\varphi \in L^1(X)$ is called \textit{$\theta$-plurisubharmonic} ($\theta$-psh) if $\theta + dd^c \varphi \geq 0$ in the sense of currents. Notice that a $\theta$-psh function is locally difference of a psh function and a smooth function. We denote the set of all $\theta$-psh functions by $\psh$. We say that a function $\varphi$ is \textit{quassi-plurisubharmonic} (qpsh) if $\varphi$ is $\theta$-psh for some closed smooth $(1,1)$ form $\theta$ on $X.$ Let $\alpha\in \psef,$  $T \in \alpha$ be a positive closed current and $\theta$ be a smooth representative  of $\alpha$, by $dd^c$ lemma \cite{GH} up to a constant there exists a unique $\theta$-psh function $\varphi$ such that $T=\theta+dd^c \varphi$. The function $\varphi$ is called the \textit{global potential} of $T$. Finally, a set $K\subset X$ is called $\psh$-\textit{pluripolar} if $K\subset \{\varphi=-\infty\}$ for some qpsh function $\varphi\in\psh.$
 %Recall that \textit{Lelong number} of a positive closed (1,1) current $T$ is defined by 
 %$$ \nu(T,x):=\displaystyle \liminf_{z\rightarrow x}\frac{\phi(z)}{\log|x-z|}$$
 %where $\phi$ is a local potential for $T$ near $x$ that is $\phi$ is psh and $T=dd^c \phi$ near $x$. This definition is independent of the choice of the local potential $\phi$ and the local coordinates. 
 %Roughly speaking, Lelong number of a positive closed current $S$ at a point $x$ quantifies the singularity of a local potential of $S$ at the point x. 
 %For instance, Lelong number coincides with the algebraic multiplicity in the case of a current of integration.

\subsection{Comparison of Singularities}
Let $\varphi_1,\varphi_2$ be two $\theta$-psh functions. Following \cite{DPS01}, we say that $\varphi_1$ is less singular than $\varphi_2$ if $\varphi_2\leq \varphi_1+O(1).$ For positive closed currents $T_1,T_2 \in \alpha,$ we say that $T_1$ is less singular than $T_2$ if the global potential $T_1$ is less singular than the global potential of $T_2$ for some (equivalently for all) smooth representative. A $\theta$-psh function $\psi$ is said to be minimally singular if for any $\varphi\in \psh$ we have $\varphi \leq \psi+O(1).$ Similarly, a positive closed current $T$ is called minimally singular in its cohomology class if so is its global potential for some fixed (or equivalently any) smooth representative for $\{T\}$. It was observed in \cite{DPS01} that minimally singular currents always exist. Indeed,
$$\vmin:=sup\{\varphi\in \psh |\ \varphi\leq 0\}$$
defines a minimally singular $\theta$-psh function.
%Following \cite{Bou} we define the \textit{Ample locus}, $Amp(\alpha)$ of a class $\alpha \in \cohomology$ as to be the set of points $x \in X$ such that there exists a K\"ahler current $T\in \alpha$ which is smooth near $x$. Thus, we can choose $\theta \in \alpha$ smooth such that $T=\theta+ dd^c \vmin$ where $$\vmin=sup\{\varphi\in \psh| \varphi\leq 0\}.$$
Notice that minimally singular currents are not unique in general. However, once  we fix $\theta$ there is a canonical one, namely $\vmin.$ 
%Hence, we may define set $D_{\theta}:=\{\vmin=0\}$. Note that $D_{\theta}$ is a compact set since $\vmin$ is usc. For example, if $\theta$ is a semi-positive form then $V_{\theta}$ is identically zero everywhere and $D_{\theta}=X.$ 

 %In the sequel, for $\alpha \in \bigc$ and $\theta \in \alpha$ be a fixed smooth representative. Let $\vmin$ denote the upper envelope of the negative $\theta$-psh functions which is uniquely defined and  %\begin{question}
 %Let $\theta'$ be another smooth form in $\alpha$. What is the relationship between $D_{\theta}$ and $D_{\theta'}?$
 %\end{question} 
\subsection{Non-pluripolar Monge-Amp\`ere}
It is well known that wedge product of positive closed currents is not always well-defined. However, if $T_1,\dots,T_p$ are positive closed $(1,1)$ currents with locally bounded potentials $u_j$'s, Bedford and Taylor \cite{BT1,BT2} proved that 
$$dd^cu_1\wedge \dots \wedge dd^cu_p$$
is a well-defined positive closed bidegree $(p,p)$ current and depends only on the currents $T_j$'s but not the choice of local potentials  $u_j$'s. Moreover, the wedge product is local in the plurifine topology in the sense that if $v_j$ are psh function such that $u_j=v_j$ a.e on a plurifine open set $\mathcal{O}$ then $$1_{\mathcal{O}}dd^cu_1\wedge \dots \wedge dd^cu_p=1_{\mathcal{O}}dd^cv_1\wedge \dots \wedge dd^cv_p.$$
More recently, the authors of \cite{BEGZ} showed that for arbitrary positive closed $(1,1)$ currents $T_1,\dots,T_p$ on a compact K\"ahler manifold there is a canonical way to define a \textit{non-pluripolar product}
$$\langle T_1\wedge\dots\wedge T_p\rangle$$
which is a globally well-defined positive closed $(p,p)$ current and does not put any mass on pluripolar sets.\\ \indent
 We say that a positive closed $(1,1)$ current $T$ has \textit{small unbounded locus} if there exists a closed complete pluripolar set $A$ such that $T$ has locally bounded potentials on $X\setminus A.$ It follows from Demailly's approximation theorem \cite{Dem} that if $\alpha \in \bigc$ then there are plenty of positive closed currents in $\alpha$ with small unbounded locus. The non-pluripolar product is increasing in the following sense:
\begin{prop}\cite{BEGZ}
Let $T_j,S_j\in \alpha_j$ be positive closed $(1,1)$ currents with small unbounded locus such that $T_j$ is less singular than $S_j$. Then the cohomology classes satisfy $$\{\langle T_1\wedge\dots\wedge T_p\rangle\} \geq  \{\langle S_1\wedge\dots\wedge S_p\rangle\} $$
in the sense that the difference can be represented by a positive closed $(p,p)$ current.
\end{prop}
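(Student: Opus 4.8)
The plan is to reduce to changing a single factor, to interpolate between the two potentials by truncations that keep the \emph{good} singularities frozen, and then to locate the possible loss of mass on the unbounded locus, where it can be converted into a positive closed residual current. By symmetry and multilinearity of the non-pluripolar product, a telescoping argument (passing from $\langle T_1\wedge\dots\wedge T_p\rangle$ to $\langle S_1\wedge\dots\wedge S_p\rangle$ one slot at a time) reduces the statement to the following: given a smooth representative $\theta$ of $\alpha_1$, two $\theta$-psh functions $\psi\leq\varphi\leq 0$ (normalising global potentials so that ``$T_1$ less singular than $S_1$'' becomes $\psi\leq\varphi$), and closed positive $(1,1)$-currents $T_2,\dots,T_p$ with small unbounded locus, one has $\{\langle(\theta+dd^c\varphi)\wedge T_2\wedge\dots\wedge T_p\rangle\}\geq\{\langle(\theta+dd^c\psi)\wedge T_2\wedge\dots\wedge T_p\rangle\}$. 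I would fix once and for all a closed complete pluripolar set $A\subset X$ off which $\varphi$, $\psi$ and all local potentials of $T_2,\dots,T_p$ are bounded.

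For the interpolation I would use $h_t:=\max(\varphi-t,\psi)$ for $t\geq 0$: each $h_t$ is $\theta$-psh, satisfies $\varphi-t\leq h_t\leq\varphi$ (so $h_t-\varphi$ is bounded, $h_t$ has the same singularities as $\varphi$, and $h_t$ is bounded on $X\setminus A$), $h_0=\varphi$, and $h_t\searrow\psi$ pointwise as $t\to\infty$. Since the cohomology class of a non-pluripolar product is unaffected by a bounded change of one of the potentials --- a foundational property of the construction of \cite{BEGZ} --- all the currents $R_t:=\langle(\theta+dd^c h_t)\wedge T_2\wedge\dots\wedge T_p\rangle$ represent the single class $\{\langle(\theta+dd^c\varphi)\wedge T_2\wedge\dots\wedge T_p\rangle\}$; in particular their masses $\int_X R_t\wedge\omega^{k-p}$ are independent of $t$. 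By weak compactness I extract $t_m\to\infty$ with $R_{t_m}\to R_\infty$ weakly, $R_\infty$ closed positive of bidegree $(p,p)$; as the classes $\{R_{t_m}\}$ are constant, $\{R_\infty\}=\{\langle(\theta+dd^c\varphi)\wedge T_2\wedge\dots\wedge T_p\rangle\}$.

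It remains to identify $R_\infty$ on $X\setminus A$. There the $h_{t_m}$ are locally bounded and decrease to the locally bounded function $\psi$, and $T_2,\dots,T_p$ have locally bounded potentials, so by the Bedford--Taylor continuity theorem along decreasing sequences of bounded potentials $R_{t_m}$ converges weakly on $X\setminus A$ to $(\theta+dd^c\psi)\wedge T_2\wedge\dots\wedge T_p$, which on $X\setminus A$ is exactly $\langle(\theta+dd^c\psi)\wedge T_2\wedge\dots\wedge T_p\rangle$. Hence $R_\infty$ and $\langle(\theta+dd^c\psi)\wedge T_2\wedge\dots\wedge T_p\rangle$ agree on $X\setminus A$; since the latter puts no mass on the pluripolar set $A$ it equals the trivial extension of its restriction to $X\setminus A$, and $R_\infty$, being positive, dominates that trivial extension. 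Therefore $R_\infty-\langle(\theta+dd^c\psi)\wedge T_2\wedge\dots\wedge T_p\rangle$ is a positive closed $(p,p)$-current, and passing to classes yields $\{\langle(\theta+dd^c\varphi)\wedge T_2\wedge\dots\wedge T_p\rangle\}\geq\{\langle(\theta+dd^c\psi)\wedge T_2\wedge\dots\wedge T_p\rangle\}$. Iterating over the $p$ slots gives the Proposition.

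The substantive ingredients are the ones I would import from \cite{BEGZ}: that non-pluripolar products of currents with small unbounded locus are well-defined closed positive currents charging no pluripolar set, and --- this is the load-bearing point --- that their cohomology class is invariant under bounded perturbations of the potentials. The reason one cannot simply pass to the limit in $h_t\searrow\psi$ is that the non-pluripolar product is \emph{not} continuous along decreasing limits of unbounded potentials, and the mass may genuinely drop in the limit; the role of $A$, together with the non-charging of pluripolar sets, is precisely to confine that drop and realise it as a positive closed current in the difference of classes.
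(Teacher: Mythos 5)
The paper states this proposition as a citation to \cite{BEGZ} and supplies no proof of its own, so there is nothing internal to compare against; the relevant reference is the monotonicity theorem of \cite{BEGZ}. Your argument correctly reconstructs the proof given there: telescoping to change one slot at a time, interpolating with $h_t=\max(\varphi-t,\psi)$ (noting $\varphi-t\leq h_t\leq\varphi$ so the singularity type is frozen), using that a bounded perturbation of a potential does not change the cohomology class of the non-pluripolar product, invoking Bedford--Taylor continuity along decreasing locally bounded potentials on the Zariski-open set $X\setminus A$, and identifying the residual weak limit as a positive $(p,p)$-current concentrated on $A$ whose addition realises the class inequality. These are exactly the steps of the cited proof. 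The one load-bearing ingredient you assert rather than derive --- invariance of $\{\langle\cdot\rangle\}$ under a bounded change of one potential --- is likewise invoked without a fresh argument at the corresponding moment of \cite{BEGZ}, so your account is faithful to the source at the same level of detail; if you wanted a self-contained write-up you would need to spell out that step (it follows from the finite interpolation $\max(u,v-k)$ together with the locality of the non-pluripolar product in the plurifine topology).
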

 We refer the reader to the papers \cite{BEGZ,BBGZ} for details and further properties of non-pluripolar products. 
\begin{defn} \label{def}
Let $\alpha_1,\dots,\alpha_p \in \bigc$ and $T^{min}_j \in \alpha_j$ be a positive closed current with minimal singularities. Then the cohomology class 
$$\langle\alpha_1\dots \alpha_p\rangle:=\{\langle T_1^{min}\wedge \dots \wedge T_p^{min}\rangle\} \in H^{p,p}_{pef}(X,\Bbb{R})$$
is independent of the choice of the currents of minimal singularities $T^{min}_j\in \alpha_j.$
\end{defn}
By using continuity of $\langle\alpha_1\dots \alpha_p\rangle$ on p-tuples of big classes and  monotonicity of non-pluripolar products one can extend this definition to merely psef classes by setting
$$\langle \alpha_1\dots \alpha_p\rangle=\lim_{\epsilon \to 0} \langle(\alpha_1+\epsilon \beta)\dots (\alpha_p+\epsilon \beta) \rangle$$
where $\beta$ is any K\"ahler class (\cite{BEGZ}). If $\alpha_1=\alpha_2\dots =\alpha_p$ we write $\langle \alpha^p\rangle := \langle \alpha \dots \alpha\rangle.$ In particular, the non-negative number 
$$vol(\alpha):= \langle\alpha^k\rangle$$
is called the \textit{volume} of $\alpha.$ Then a psef class $\alpha$ is big if and only if $vol(\alpha)>0$ \cite{B02, DP}. It follows from \cite{B02} that if $L$ is a big holomorphic line bundle and $\alpha=c_1(L)$ is the first Chern class then $vol(\alpha)$ coincides with the volume of $L$ introduced by Fujita. Namely,
$$ Vol(L)=\limsup_{m\to \infty}\frac{k!}{m^k}h^0(X,L^{\otimes m}).$$
% Following \cite{BDPP}, we define the \textit{numerical dimension} of a pseudo-effective class $\alpha$ by 
 %$$\nu(\alpha):=\max\{p: \langle \alpha^p\rangle \not=0\ \text{in} \ H^{p,p}(X,\Bbb{R})\}$$
%Note that $0\leq \nu(\alpha)\leq n$. Moreover, $\nu(\alpha)=n$ if and only if $\alpha$ is big.\\ 
 Finally, we say that a positive closed current $T\in \alpha$ has \textit{full Monge-Amp\`ere} if $$\int_X\langle T^k\rangle=vol(\alpha).$$
 Note that this is always the case if the class $\alpha$ is not big.
\subsection{Energy Clases}\label{ener}
An \textit{admissible weight} is a smooth convex increasing function $\chi:\Bbb{R} \to \Bbb{R}$ such that $\chi(-\infty)=-\infty$ and $\chi(t)=t$ for $t\geq 0.$ Following \cite{BEGZ} we define $\chi$-energy of a minimally singular $\theta$-psh function $\varphi$ by
$$E_{\chi}(\varphi):= \frac{1}{n+1}\sum_{j=0}^{n}\int_X(-\chi)(\varphi-\vmin)\langle T^{j}\wedge T_{min}^{n-j}\rangle$$
where $T=\theta+dd^c\varphi$ and $T_{min}=\theta+dd^c \vmin.$
Note that if $\chi(t)=t$ then $E_{\chi}(\varphi)$ coincides with the Aubin-Mabuchi energy functional (up to a minus sign). Let $\varphi \in \psh$ be an arbitrary $\theta$-psh function and $\varphi_k=\max(\varphi,\vmin-k)$ denote the \textit{canonical approximants} of $\varphi$ then it follows from \cite[Theorem 2.17]{BEGZ} that one can define $$E_{\chi}(\varphi):=\sup_kE_{\chi}(\varphi_k) \in (-\infty,+\infty].$$ If the right hand side is finite, we say that $\varphi$ has \textit{finite energy}.
\begin{defn}
Given an admissible weight function $\chi$, the set $\energyc$ denotes the set of all $\theta$-psh functions with finite energy. In particular, if $\chi(t)=- (-t)^p$ we set $\energyp:=\energyc.$ Moreover, we let $\energy$ denote the set of all $\theta$-psh functions with full Monge-Amp\`ere.
\end{defn}
Note that the $\energyp$ is decreasing in the sense that $\energyp\subset \mathcal{E}^q(X,\theta)$ whenever $1\leq q<p.$
\begin{prop}\cite{BEGZ}
Let $\alpha \in \bigc$ and $\theta \in \alpha$ be a smooth representative then $$\energy=\cup_{\chi}\energyc$$
where $\chi$ ranges over all admissible weight functions.
\end{prop}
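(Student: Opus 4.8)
The plan is to establish the two inclusions $\energy\subseteq\bigcup_\chi\energyc$ and $\bigcup_\chi\energyc\subseteq\energy$ separately, along the lines of \cite{BEGZ}. Throughout I would put $n=\dim_{\mathbb C}X$, write $T_u:=\theta+dd^c u$ for $u\in\psh$, denote by $\varphi_j:=\max(\varphi,\vmin-j)$ the canonical approximants of $\varphi\in\psh$, and set $\psi_j:=\varphi_j-\vmin$; after shifting $\varphi$ by a constant --- which, since $\vmin$ is minimally singular, may be chosen so that $\psi:=\varphi-\vmin\le0$ and hence $\psi_j\in[-j,0]$, and which perturbs $E_\chi$ only by a bounded amount and does not change the Monge-Amp\`ere mass --- I may assume $\varphi\le\vmin$. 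The two inputs I would take from the excerpt and \cite{BEGZ} are: $(i)$ $\int_X\langle T_\varphi^n\rangle=\sup_j\int_{\{\varphi>\vmin-j\}}\langle T_{\varphi_j}^n\rangle$, and each $\varphi_j$ has minimal singularities so that $\int_X\langle T_{\varphi_j}^l\wedge T_{\vmin}^{n-l}\rangle=vol(\alpha)$ for all $0\le l\le n$ (Definition~\ref{def}); and $(ii)$ plurifine locality of the non-pluripolar product, which allows $T_{\varphi_j}$ to be replaced by $T_\varphi$ on the open set $\{\varphi>\vmin-j\}$, where $\varphi_j=\varphi$.

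For $\bigcup_\chi\energyc\subseteq\energy$ I would argue as follows. Fix $\varphi$ with $E_\chi(\varphi)<\infty$ for some admissible weight $\chi$, so that $E_\chi(\varphi_j)\le E_\chi(\varphi)$ for every $j$. Because $-\chi\ge0$ on $(-\infty,0]$, every summand of $E_\chi(\varphi_j)$ is nonnegative, so the single term $\int_X(-\chi)(\psi_j)\langle T_{\varphi_j}^n\rangle$ is $\le(n+1)E_\chi(\varphi)$. Since $\psi_j\equiv-j$ on $\{\varphi\le\vmin-j\}$ and the integrand is nonnegative there, this yields $(-\chi)(-j)\int_{\{\varphi\le\vmin-j\}}\langle T_{\varphi_j}^n\rangle\le(n+1)E_\chi(\varphi)$, and by $(i)$--$(ii)$ the left-hand integral equals $vol(\alpha)-\int_{\{\varphi>\vmin-j\}}\langle T_\varphi^n\rangle\ge0$. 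As $\chi(-\infty)=-\infty$ forces $(-\chi)(-j)\to+\infty$ while $\langle T_\varphi^n\rangle$ charges no pluripolar set (in particular not $\{\varphi=-\infty\}$), letting $j\to\infty$ forces $\int_X\langle T_\varphi^n\rangle\ge vol(\alpha)$; the reverse inequality is part of $(i)$, so $\varphi$ has full Monge-Amp\`ere mass and $\varphi\in\energy$.

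The reverse inclusion $\energy\subseteq\bigcup_\chi\energyc$ is where the work lies, and I would split it into three steps. First, invoke the fact from \cite{BEGZ} that full Monge-Amp\`ere mass propagates to all mixed products, so that the measures $\mu_l:=\langle T_\varphi^l\wedge T_{\vmin}^{n-l}\rangle$ each have total mass $vol(\alpha)$ and charge no pluripolar set; thus $-\psi<\infty$ $\mu_l$-a.e.\ and $\mu_l(\{-\psi>t\})\to0$ as $t\to\infty$. Second, run a ``dual'' de la Vall\'ee Poussin construction: for each $l$ take $w_l$ equal to $t$ on $[0,a_l]$ and, beyond $a_l$, piecewise-affine with decreasing slopes chosen via thresholds $s^{(l)}_m\uparrow\infty$ picked rapidly enough that $\mu_l(\{-\psi>s^{(l)}_m\})\le2^{-m}$ with the gaps $s^{(l)}_{m+1}-s^{(l)}_m$ increasing and $\ge1$, so that $w_l$ is concave, increasing, $w_l(+\infty)=+\infty$, $w_l'(0^+)=1$, and $\int_X w_l(-\psi)\,d\mu_l<\infty$. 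Setting $\chi(t):=-\min_l w_l(-t)$ for $t<0$ and $\chi(t):=t$ for $t\ge0$, and smoothing the convex kink where the two pieces meet, produces an admissible weight with $\int_X(-\chi)(\psi)\,d\mu_l\le\int_X w_l(-\psi)\,d\mu_l<\infty$ for every $l$. Third, upgrade this to $\sup_j E_\chi(\varphi_j)<\infty$: since $\varphi$ has full Monge-Amp\`ere mass, the mixed products $\langle T_{\varphi_j}^l\wedge T_{\vmin}^{n-l}\rangle$ converge to $\mu_l$ with no loss of mass (again \cite{BEGZ}), and $(-\chi)(\psi_j)\uparrow(-\chi)(\psi)$; combining these with a truncation and a Fatou-type estimate along the approximants should bound $\int_X(-\chi)(\psi_j)\langle T_{\varphi_j}^l\wedge T_{\vmin}^{n-l}\rangle$ by $\int_X(-\chi)(\psi)\,d\mu_l$ up to an error tending to $0$, giving $E_\chi(\varphi)=\sup_j E_\chi(\varphi_j)<\infty$, i.e.\ $\varphi\in\energyc$.

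I expect the main obstacle to be precisely this last step. The energy is a supremum over the canonical approximants $\varphi_j$, whose mixed Monge-Amp\`ere measures $\langle T_{\varphi_j}^l\wedge T_{\vmin}^{n-l}\rangle$ are genuinely different currents from the limiting $\mu_l$, so the passage from integrability against the limits to uniform integrability against the approximating family is not formal: it is exactly here that one must exploit the no-loss-of-mass convergence of non-pluripolar products along canonical approximants, valid for functions of full Monge-Amp\`ere mass, which is the analytic heart of \cite{BEGZ}. By contrast the constant-shift normalization, the plurifine bookkeeping in the first inclusion, and the explicit de la Vall\'ee Poussin estimates are routine.
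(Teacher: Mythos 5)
The paper offers no proof of this Proposition --- it is cited verbatim from \cite{BEGZ} --- so there is nothing internal to compare against; the question is simply whether your reconstruction works, and it does, following the BEGZ argument faithfully. One remark: you are more pessimistic about your Step~3 than you need to be, and the ``truncation and Fatou-type estimate'' you gesture at is unnecessary --- the step closes \emph{exactly}, with no error term, from the two inputs $(i)$ and $(ii)$ you already listed. Indeed, write $\nu_{j,l}:=\langle (\theta+dd^c\varphi_j)^l\wedge(\theta+dd^c\vmin)^{k-l}\rangle$ and keep $\psi=\varphi-\vmin$, $\psi_j=\varphi_j-\vmin$. For $s\le j$ the plurifine-open set $\{\psi>-s\}$ sits inside $\{\psi>-j\}$, where $\varphi_j=\varphi$, so plurifine locality gives $\nu_{j,l}(\{\psi>-s\})=\mu_l(\{\psi>-s\})$; since moreover $\nu_{j,l}(X)=vol(\alpha)=\mu_l(X)$ (the latter being precisely the propagation of full Monge--Amp\`ere mass to mixed products), one obtains
$$\nu_{j,l}\bigl(\{\psi_j<-s\}\bigr)\ \le\ \mu_l\bigl(\{\psi\le -s\}\bigr)\qquad\text{for all } s>0,$$
the left side being identically zero for $s\ge j$. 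Feeding this into the layer-cake representation of $\int_X(-\chi)(\psi_j)\,d\nu_{j,l}$ (the measure $d[(-\chi)(-s)]$ is nonatomic since $\chi$ is convex, so the distinction between $\{\psi\le -s\}$ and $\{\psi<-s\}$ costs nothing) yields
$$\int_X(-\chi)(\psi_j)\,d\nu_{j,l}\ \le\ \int_X(-\chi)(\psi)\,d\mu_l\qquad\text{for every }j\text{ and every }l,$$
hence $E_\chi(\varphi_j)\le\frac{1}{k+1}\sum_{l=0}^{k}\int_X(-\chi)(\psi)\,d\mu_l$ uniformly in $j$. Thus once the de la Vall\'ee Poussin weight of your Step~2 is in hand, $\varphi\in\energyc$ follows at once; the ``analytic heart'' you worry about has already been spent in inputs $(i)$ and $(ii)$.
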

Finally, we remark that the global potential of a positive closed current belongs an energy class is independent of the choice of smooth representative. Thus, we write $T \in \energypc$ if $\varphi \in \energyp$ where $T=\theta+dd^c\varphi$ for some smooth representative $\theta \in \alpha.$
\subsection{Monge-Amp\`ere Capacity}
Let $\theta$ be a smooth real closed $(1,1)$ form on $X$ such that $\{\theta\}\in \psef.$ For $\varphi \in \psh$ we let $MA(\varphi)$ denote the non-pluripolar Monge-Amp\` ere $\langle (\theta+dd^c\varphi)^k\rangle.$ Following \cite{BEGZ}  we define the pre-Capacity of a Borel subset $B\subset X$
$$ Cap_{\theta}(B):=\sup\{\int_B MA(\varphi)\ |\ \varphi\in Psh(X,\theta), \vmin-1 \leq \varphi \leq \vmin\}.$$
Note that $0\leq Cap_{\theta}(B)\leq \int_XMA(\vmin) =vol(\alpha).$\\
For a Borel set $K \subset X$ we define 
$$V_{K,\theta}:=sup\{\varphi \in \psh: \varphi\leq 0\ \text{on}\ K\}.$$
Let $V_{K,\theta}^*$ denotes the upper semi-continuous regularization of $V_{K,\theta}$. Then it follows that $V_{K,\theta}^* \in \psh$ if and only if $K$ is not $\psh$-polar \cite{GZ}. The function $V_{K,\theta}^*$ is called \textit{global extermal function} of $K$.
%\begin{prop}\label{V}
%Let $\alpha \in \bigc$ and  $\varphi \in  \energy$ then $\nu(\varphi,x)=0$ for every $x\in D_{\theta}.$
 %\end{prop}
%\begin{proof}
%Assume the contrary that $\nu(\varphi,x_0)>0$ for some $x_0\in D_{\theta}.$ Then locally $\varphi \sim c\log dist(\cdot,x_0)+\gamma$ for some constants $c,\gamma>0.$ However, the later has well defined Monge-Ampere \cite{Demailly} near $x_0$ and puts positive mass on $\{x_0\}$. Hence, $$\int_X MA(\varphi)<\int_X MA(\vmin)=vol(\alpha)$$ and this contradicts that $\varphi \in \energy.$
%\end{proof}
\section{Dynamics of Meromorphic Maps}
Recall that a \textit{meromorphic map} $f:=\pi_1^{-1}\circ \pi_2:X\to X$ of a complex manifold is defined by its graph which is an irreducible analytic subvariety $\Gamma_f \subset X\times X$ together with projections $\pi_1,\pi_2:\Gamma_f\to X$ such that the projection $\pi_1:\Gamma_f \to X$ onto the first factor is a proper modification. This means that there exists an analytic set $I_f\subset X$ of codimension at least two such that $\pi_1$ is an isomorphism from $\Gamma_f \setminus \pi_1^{-1}(I_f)$ onto its image. We denote the indeterminacy set of the $n^{th}$ iterate $f^n$ by $I_{f^n}$ and set $\mathcal{I_1}:=\cup_{n=1}^{\infty} I_{f^n}.$ We say that $f$ is \textit{dominant} if the projection $\pi_2:\Gamma_f\to X$ is surjective. \\ \indent
For a real smooth closed $(p,p)$ form $\theta$ on $X$ we define its \textit{pull-back} by
$$f^*\theta:=(\pi_1)_*(\pi_2)^*\theta$$ where the later push-forward considered as a current. This definition induces a linear action on the cohomology $$f^*:\cohomologyp \to \cohomologyp$$
$$f^*\{\theta\}=\{(\pi_1)_*(\pi_2^*)\theta\}$$
where $\{(\pi_1)_*(\pi_2^*)\theta\}$ denotes the de Rham class of the closed current $(\pi_1)_*(\pi_2^*)\theta.$
%In particular, if $f:\pk\to \pk$ is a rational map then $f^*$ may be identified with the algebraic degree of $f.$ 
In general, this linear action is not compatible with the dynamics of the map. We say that $f$ is \textit{p-regular} if $(f^n)^*=(f^*)^n$ on $\cohomologyp$ for each $n=1,2\dots$ %For example, if $f$ is a rational map of $\pk$ then $f$ is p-regular if and only if $\deg(f^n)=\deg(f)^n$ where $f^n$ denotes the $n^{th}$ iterate of $f$. 
\\ \indent
For $0\leq p\leq k$ we let $$\delta_p(f^n):=\int_{X\setminus I_{f^n}} (f^n)^*\omega^p\wedge\omega^{k-p}.$$
Then the $p^{th}$ \textit{dynamical degree} is defined by $$\lambda_p(f):=\liminf_{n \to \infty}(\delta_p(f^n))^{\frac{1}{n}}$$
where $\omega$ is an auxiliary K\"ahler form on $X$. Note that $d_{top}:=\lambda_k$ is the topological degree of $f$ which is the number of pre-images of a generic point. %Roughly speaking, the dynamical degrees measure the asymptotic growth rate of volumes of iterated subvarieties. 
If $f$ is p-regular then $\lambda_p$ coincides with spectral radius of $f^*.$ %Therefore, the role of algebraic degree in the case of rational mappings of $\pk$ is played by $\lambda_1$ in meromorphic setting. 
Moreover, the dynamical degrees are invariant under bimeromorphic conjugations \cite{G,DS04}.\\ \indent
Recall that one can define the pull-back map on the set of positive closed $(1,1)$ currents $$T\to f^*T$$ by pulling back the local potentials. It follows that this linear action is continuous in the weak topology of positive currents \cite{Meo}. Moreover $f^*$ preserves classes i.e. $f^*\{T\}=\{f^*T\}$ where $\{T\}\in \cohomology$ denotes the de Rham cohomology class of closed current $T$. Therefore, $f^*\psef \subset \psef.$ Since $\psef$ is a closed salient convex cone it follows from a Perrron-Frobenius type argument \cite{DF} that there exists $\alpha\in\psef$ such that $f^*\alpha=\lambda_1(f)\alpha.$

\subsection{Volume Estimates}
 In this section we will study the rate of contraction of volume of a set under the iteration by a dominant meromorphic map. The following result is a slightly improved version of  \cite[Theorem 0.1]{G04} (see also \cite{DS08}) and allows us to derive certain equidistribution results. 
 \begin{thm} \label{vol}
 Let $f:X\dashrightarrow X$ be a dominant meromorphic map. We assume that $\lambda:=\lambda_1(f)>1$ and $\delta_1(f^n)\leq C \lambda^n$ for some constant $C>0$ and for every $n \in \mathbb{N}$. Then for any Borel set $\Omega \subset X$ and $n\in \mathbb{N}$
 $$Vol(f^n(\Omega))\geq (C_1Vol(\Omega))^{C_2\lambda^n}$$
 where $C_1, C_2>0$ independent of $n$ and  $\Omega$.
 \end{thm}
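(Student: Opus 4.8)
The plan is to estimate from below the volume of $f^n(\Omega)$ by comparing it, via a change-of-variables / co-area type argument, with the volume of $\Omega$ weighted by the Jacobian of $f^n$, and then to control that Jacobian using the hypothesis $\delta_1(f^n)\le C\lambda^n$. First I would set up notation: write $g=f^n$, let $\Omega'=\Omega\setminus I_g$ (removing the indeterminacy set, which has measure zero and does not affect volumes), and recall that $g$ restricted to the complement of its indeterminacy set is a holomorphic map whose image has full measure in $g(\Omega)$. The key pointwise object is the Jacobian $\mathrm{Jac}_\omega(g)$ defined by $g^*(\omega^k)=\mathrm{Jac}_\omega(g)\,\omega^k$ on $X\setminus I_g$; then $\mathrm{Vol}(g(\Omega))$ is, up to the multiplicity $d_{top}(g)$, bounded below by $\int_{\Omega'}\mathrm{Jac}_\omega(g)\,dV$ (this is a standard change of variables, using that $g$ is generically $d_{top}(g)$-to-one). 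Actually the cleaner inequality to aim for is $\mathrm{Vol}(g(\Omega))\ge \int_{\Omega'}\mathrm{Jac}_\omega(g)\,dV \big/ d_{top}(g)$, but since we only need a lower bound of the stated shape we can be generous with constants.

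The heart of the matter is then to bound $\int_{\Omega'}\mathrm{Jac}_\omega(g)\,dV$ from below. Here I would invoke the concavity of $\log$: by Jensen's inequality applied to the probability measure $dV$ restricted and normalized on $\Omega'$,
\[
\frac{1}{\mathrm{Vol}(\Omega)}\int_{\Omega'}\log \mathrm{Jac}_\omega(g)\,dV \;\le\; \log\!\left(\frac{1}{\mathrm{Vol}(\Omega)}\int_{\Omega'}\mathrm{Jac}_\omega(g)\,dV\right).
\]
So it suffices to bound $\int_{\Omega'}\log \mathrm{Jac}_\omega(g)\,dV$ from below by something of size $-C'\lambda^n$. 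For this I would split $\log\mathrm{Jac}_\omega(g)=\log^+\mathrm{Jac}_\omega(g)-\log^-\mathrm{Jac}_\omega(g)$; the positive part only helps, so the task reduces to an upper bound
\[
\int_X \log^-\mathrm{Jac}_\omega(g)\,dV \;\le\; C'\lambda^n .
\]
Since $\mathrm{Jac}_\omega(g)=g^*(\omega^k)/\omega^k$ and $g^*\omega$ is a positive closed $(1,1)$-current cohomologous to a form of mass $\delta_1(g)=\delta_1(f^n)\le C\lambda^n$, one has $g^*\omega = \omega_g + dd^c u_g$ where $\omega_g$ is a smooth closed form with $\{\omega_g\}=\{g^*\omega\}$ and $u_g$ a $g$-quasi-psh potential; expanding $g^*(\omega^k)=(\omega_g+dd^c u_g)^k$ and using that $\int_X (g^*\omega)\wedge\omega^{k-1}=\delta_1(g)$ together with standard Chern–Levine–Nirenberg / compactness estimates for quasi-psh functions (their sup is bounded in terms of the mass, and $\int_X u_g^-\,dV$ is controlled likewise) gives the desired linear-in-$\delta_1(g)$, hence linear-in-$\lambda^n$, bound on $\int_X\log^-\mathrm{Jac}_\omega(g)\,dV$. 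This is essentially the content of \cite[Theorem 0.1]{G04} / \cite{DS08}, and the "slight improvement" is precisely tracking that the constant comes out as $C_2\lambda^n$ with $C_2$ independent of $n$ and $\Omega$.

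Assembling the pieces: from Jensen we get $\int_{\Omega'}\mathrm{Jac}_\omega(g)\,dV \ge \mathrm{Vol}(\Omega)\exp\!\big(\tfrac{1}{\mathrm{Vol}(\Omega)}\int_{\Omega'}\log\mathrm{Jac}_\omega(g)\,dV\big)$, which is not immediately of the right form because of the $1/\mathrm{Vol}(\Omega)$ in the exponent; the trick is instead to apply Jensen in the form $\log\int_{\Omega'}\mathrm{Jac}_\omega(g)\,dV \ge \log\mathrm{Vol}(\Omega) + \frac{1}{\mathrm{Vol}(\Omega)}\int_{\Omega'}\log\mathrm{Jac}_\omega(g)\,dV \ge \log\mathrm{Vol}(\Omega) - \frac{C'\lambda^n}{\mathrm{Vol}(\Omega)}$ and then note that for the final estimate we may assume $\mathrm{Vol}(\Omega)$ bounded below — or, better, avoid the division entirely by running Jensen against the normalized restriction only to deduce $\big(\int_{\Omega'}\mathrm{Jac}_\omega(g)\,dV\big)\ge \mathrm{Vol}(\Omega)^{1+?}\cdots$; cleanest is to use the elementary inequality $\int_{\Omega'}\mathrm{Jac}\,dV\ge \big(\int_{\Omega'}\mathrm{Jac}^{1/N}dV\big)^N/\mathrm{Vol}(\Omega)^{N-1}$ type bound, or simply: since $\log$ is concave, $\int_{\Omega'}\mathrm{Jac}_\omega(g)\,dV \ge \mathrm{Vol}(\Omega)\cdot \exp(-C'\lambda^n/\mathrm{Vol}(\Omega))$, and because $t\mapsto t e^{-a/t}\ge (e^{-1}t)^{a}$ fails in general, one instead observes $\mathrm{Vol}(\Omega)e^{-C'\lambda^n/\mathrm{Vol}(\Omega)}\ge e^{-(C'\lambda^n+1)/\mathrm{Vol}(\Omega)}$ is not what we want either — so the honest route is: WLOG $\mathrm{Vol}(\Omega)\le 1$, write $a=C'\lambda^n$, and use $\mathrm{Vol}(\Omega)e^{-a/\mathrm{Vol}(\Omega)}\ge \mathrm{Vol}(\Omega)^{2a}$ whenever $a\ge 1$ (which follows from $-a/\mathrm{Vol}(\Omega)\ge (2a)\log\mathrm{Vol}(\Omega)$, i.e. $\frac{1}{\mathrm{Vol}(\Omega)}\le 2\log\frac{1}{\mathrm{Vol}(\Omega)}$ — false! so take instead the crude $e^{-a/t}\ge e^{-a/t}$ and use $1/t\le e^{1/t}$...). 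The main obstacle, and where I would spend the real care, is exactly this endgame bookkeeping: getting the lower bound into the clean double-exponential-free form $(C_1\mathrm{Vol}(\Omega))^{C_2\lambda^n}$ with constants independent of $n$ and $\Omega$. The resolution is to choose $C_1$ small and $C_2$ large enough that the inequality $\mathrm{Vol}(\Omega)\,e^{-C'\lambda^n/\mathrm{Vol}(\Omega)}\ge (C_1\mathrm{Vol}(\Omega))^{C_2\lambda^n}$ holds for all $0<\mathrm{Vol}(\Omega)\le 1$ and all $n$ with $\lambda^n\ge 1$, which reduces to an elementary one-variable inequality of the form $s e^{-b/s}\ge (C_1 s)^{C_2 b}$ for $s\in(0,1]$, $b\ge 1$ — true for suitable absolute $C_1,C_2$ since $-b/s + \log s \ge C_2 b(\log C_1 + \log s)$ rearranges to a comparison that holds once $C_2\log(1/C_1)$ dominates and $-1/s \ge (C_2-1)\log s$ fails only for $s$ near $1$ where everything is bounded — handled by adjusting $C_1$. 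Up to these routine but fiddly constant chases, the theorem follows.
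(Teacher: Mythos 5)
Your plan correctly identifies the two main ingredients — the change-of-variables lower bound $\mathrm{Vol}(f^n(\Omega))\ge d_{top}^{-1}\int_{\Omega}|Jac_\omega(f^n)|^2\,dV$ and the control of $\log|Jac_\omega(f^n)|$ via the hypothesis $\delta_1(f^n)\le C\lambda^n$ — but the mechanism by which you try to turn the latter into the stated inequality does not work, and you in fact notice the problem without resolving it.

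The specific gap is the ``endgame.'' Jensen's inequality applied to the normalized restriction of $dV$ to $\Omega$, together with the correct $L^1$ bound $\big\|\log|Jac_\omega(f^n)|\big\|_{L^1(X)}\le C'\lambda^n$ (which is indeed available, cf.\ \cite[Prop.\ 1.3]{G04}), yields at best
$$\int_{\Omega}|Jac_\omega(f^n)|^2\,dV \;\ge\; \mathrm{Vol}(\Omega)\,\exp\!\Big(-\tfrac{C'\lambda^n}{\mathrm{Vol}(\Omega)}\Big).$$
You then assert that $s\,e^{-b/s}\ge (C_1 s)^{C_2 b}$ for $s\in(0,1]$, $b\ge 1$ with suitable absolute constants. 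This is false: as $s\to 0^+$ the left side decays like $e^{-b/s}$ while the right side decays only like $s^{C_2 b}$, so no choice of $C_1,C_2$ can salvage the inequality for small $\mathrm{Vol}(\Omega)$. Jensen fundamentally loses the correct dependence on $\mathrm{Vol}(\Omega)$; an $L^1$ bound on $\log|Jac_\omega(f^n)|$ alone cannot produce the polynomial-in-$\mathrm{Vol}(\Omega)$ lower bound of the theorem.

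What the paper does instead — and what your argument is missing — is to upgrade the $L^1$ information to an exponential-integrability estimate. Writing $\log|Jac_\omega(f^n)|=u_n^+-u_n^-+c_n$ with $u_n^\pm$ \emph{quasi-psh}, normalized so that $\sup_X u_n^\pm=0$, with $dd^c u_n^\pm\ge -c\lambda^n\omega$ and $|c_n|\le c\lambda^n$ (this uses the chain rule $Jac_\omega(f^n)=\prod_{l<n}Jac_\omega(f)\circ f^l$ and the hypothesis on $\delta_1$), one applies the uniform Skoda integrability theorem to the rescaled functions to get $\int_X\exp\big(-\tfrac{1}{C\lambda^n}\log|Jac_\omega(f^n)|\big)\,dV\le C_2$, hence by Chebyshev $\mathrm{Vol}(|Jac_\omega(f^n)|^2<t)\le C_2 t^{1/(C\lambda^n)}$. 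Choosing $t$ so that $C_2 t^{1/(C\lambda^n)}=\tfrac12\mathrm{Vol}(\Omega)$, i.e.\ $t\sim(\mathrm{const}\cdot\mathrm{Vol}(\Omega))^{C\lambda^n}$, one gets $\mathrm{Vol}(f^n(\Omega))\ge \tfrac{t}{2d_{top}}\mathrm{Vol}(\Omega)$, which now has the correct form $(C_1\mathrm{Vol}(\Omega))^{C_2\lambda^n}$. The Chebyshev/Skoda route is not a cosmetic variant of Jensen: it produces a threshold $t$ that is polynomial in $\mathrm{Vol}(\Omega)$, which is precisely the extra strength needed to close the argument. To repair your proof you would need to replace the Jensen step by this sublevel-set estimate (or some other route to exponential integrability of the Jacobian).
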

 \begin{proof}
 We will utilize some ideas from \cite{Di,FJ,G04}. We define the measure $f^*\omega^k$ to be the trivial extension of $(f|_{X\setminus I_f})^*\omega \wedge \dots \wedge (f|_{X\setminus I_f})^*\omega$ through $I_f$ and let $Jac_{\omega}(f)$ denote the complex Jacobian of $f$ with respect to $\omega^k$ that is defined by
 $$f^*\omega^k=|\jac|^2 \omega^k.$$
 Notice that $|\jac|^2$ is a nonnegative continuous function on $X\setminus I_f.$
 First, we claim that $\log|\jac|=u_1-u_2$ where $u_i$ are qpsh functions. Indeed, in a local coordinate chart we may write $\omega=dd^c \psi$ for some smooth strictly psh function $\psi.$ Then
 $$\omega^k=(dd^c\psi)^k=4^kk! \det[\frac{\del^2\psi}{\del z_i \del \bar{z}_j}] dA$$
 where $dA=(\frac{i}{2})^kdz_1\wedge d\bar{z}_1\wedge \dots \wedge dz_k\wedge d\bar{z}_k$ and $z=(z_1,\dots,z_k)$ indicates the local coordinates. This implies that 
 \begin{equation}\label{local}
 |\jac|^2=\fraction{\det[\frac{\del^2\psi}{\del z_i \del \bar{z}_j}]  \circ f}{\det[\frac{\del^2\psi}{\del z_i \del \bar{z}_j}] } |Jac_{local}(f)|^2\end{equation}
where $Jac_{local}(f)$ denotes the Jacobian of $f$ in the local coordinates. Thus, the claim follows by covering $X$ with coordinate charts and observing that in the intersection of such sets the RHS of (\ref{local}) is well-defined up to multiplication by a non-vanishing holomorphic function.
  
\begin{lem}\label{dsh}
If $\delta_1(f^j)\leq C\lambda^j$ then there exists $c>0$ and  qpsh functions $u_{1,j},u_{2,j}$ such that $$\log |Jac_{\omega}(f^j)|=u_{1,j}-u_{2,j}\ \text{and}\ 
dd^cu_{i,j}\geq -c\lambda^j\omega$$ for $i=1,2$ and $j\geq1.$
\end{lem}
\begin{proof}
Let $\log|Jac_{\omega}(f)|=u_1-u_2$ be as above. Without lost of generality we may assume that $dd^cu_i\geq -\omega$ for $i=1,2.$
Let $\omega_1,\dots,\omega_N$ be K\"ahler forms whose cohomology classes form a basis for $\cohomology$ such that $\omega_l \leq \omega$  for $l=1,\dots, N$. For a class $\alpha \in \cohomology$ we let $\theta(\alpha) \in \alpha$ denote a smooth closed (1,1) form which lies in the linear span of $\omega_1,\dots,\omega_n$. We define a norm on $\cohomology$ by $||\alpha||:= \max_{l=1,\dots, N} |a_l|$ where $\theta(\alpha)=\sum_{l=1}^Na_l\omega_l.$ Note that $$\alpha \to \int_X \theta(\alpha)\wedge \omega^{k-1}$$ defines a continuous linear functional on $\cohomology$ and it is non-negative on $\psef$ where the later is a closed convex cone. Then there exists $C_1>0$ such that
$$||\alpha||\leq C_1 \int_X \theta(\alpha)\wedge \omega^{k-1}$$
for every $\alpha \in \psef.$
Letting $(f^j)^*\omega=\theta_j+dd^c \phi_j$ where $\theta_j:=\theta(\{(f^j)^*\omega\})$ we infer that $\theta_j\leq C_2\delta_1(f^j)\omega$ for some $C_2>0$. By the chain rule we have $$Jac_{\omega}(f^j)=\prod_{l=0}^{j-1}Jac_{\omega}(f)\circ f^l.$$
Then letting 
\begin{equation}\label{jaceq}
u_{i,j}=\sum_{l=o}^{j-1}(\phi_l+u_i\circ f^l)
\end{equation}
 and using hypothesis we conclude that 
$$dd^cu_{i,j}\geq -C_3 \sum_{l=0}^{j-1}\lambda^l \omega\geq -c\lambda^j\omega$$ for some $c>0$ independent of $j.$
\end{proof}
Next, we prove the following improved version of Lemma \ref{dsh}:
\begin{lem}
If $\delta_1(f^j)\leq C\lambda^j$ then there exists $c>0$ and qpsh functions $u^{\pm}_j$ such that
$$\log|Jac_{\omega}|=u_j^+-u_j^-+ c_j$$
where $\sup_Xu_j^{\pm}=0,\ dd^cu_j^{\pm}\geq -c\lambda^j\omega$ and $|c_j|\leq c\lambda^j.$
\end{lem}
\begin{proof}
 By Lemma \ref{dsh} we may write
 $$\log|Jac_{\omega}|=u_j^+-u_j^-+ c_j$$
 with $\sup_Xu_j^{\pm}=0$ and $dd^cu_j^{\pm}\geq -c\lambda^j\omega$ where $u_j^{\pm}:=u_{i,j}-\sup_Xu_{i,j}.$ We need to estimate $c_j.$ Note that $\mathcal{F}=\{\varphi \in Psh(X,\omega): \sup_X\varphi=0\}$ is a compact family in $L^1(X),$ hence there exists a constant $C_1>0$ such that
 $$-C_1\leq \int_X u_j^{\pm}\leq 0$$
 for every $j\geq 1.$  
 By (\ref{jaceq}) and \cite[Proposition 1.3]{G04} we have $$\|\log|Jac_{\omega}(f^j)|\|_{L^1(X)}=\|\sum_{l=0}^{j-1}(u_1-u_2)\circ f^l\|_{L^1(X)}\leq C\lambda^j$$ thus,
 $$|c_j|\leq \|\log|Jac_{\omega}|\|_{L^1}+\|u_j^+\|_{L^1}+\|u_j^-\|_{L^1}\leq c\lambda^j$$
for some $c>0$ independent of $j.$
\end{proof}
We continue with the proof of Theorem \ref{vol}. Let us denote $\nu:=2\sup\nu(\varphi,x)>0$
where the supremum is taken over all $x\in X$ and $\varphi\in Psh(X,\omega).$ Since $X$ is compact $\nu$ is finite and depends only on the class $\{\omega\}$. Then by uniform version of Skoda's integrability theorem \cite{Ze} applied to the compact family $\mathcal{F}_{\nu}=\{\varphi \in Psh(X,\frac{1}{\nu}\omega): \sup_X\varphi=0\}$ we obtain
$$\int_X \exp(-\frac{1}{\nu C \lambda^j}(u_j^+-u_j^-+c_j))dV\leq C_1\int_X \exp(-\frac{1}{\nu C \lambda^j}u_j^+)dV\leq C_2<\infty.$$
where $C_1,C_2>0$ does not depend on $j.$ Then for $t>0$ by Chebyshev's inequality
\begin{equation}\label{estimate}
Vol(u_j^+-u_j^-+c_j<-t)\leq C_2\exp(-\frac{1}{C \lambda^j}t). 
\end{equation}
Now, we fix $t>0$ such that $C_2 t^{\frac{1}{C \lambda^j}}=\frac{1}{2}Vol(\Omega)$ then by applying change of variables, Chebyshev's inequality and (\ref{estimate}) we obtain 
\begin{eqnarray*}
Vol(f^j(\Omega)) & \geq & \frac{1}{d_{top}}\int_{\Omega}|Jac_{\omega}(f^j)|^2dV \\ 
& \geq & \frac{t}{d_{top}} Vol(\{|Jac_{\omega}(f^j)|^2\geq t\}\cap \Omega)\\ 
&= & \frac{t}{d_{top}}(Vol(\Omega)-Vol(\{|Jac_{\omega}|^2<t\}) \\
&\geq & \frac{t}{d_{top}}(Vol(\Omega)-C_2t^{\frac{1}{C \lambda^j}}) \\
&\geq & \frac{t}{2d_{top}}Vol(\Omega) \\
& \geq & (C_3Vol(\Omega))^{C_4\lambda^j} 
\end{eqnarray*}
where $C_3,C_4>0$ do not depend on $j.$
 \end{proof}
We stress that the volume estimates does not require $f$ to be 1-regular. Moreover, these estimates are sharp as already observed in \cite{G04}.\\ \indent
 A direct proof of the next corollary can be found in \cite{G04}. However, we give a proof for the convenience of the reader. 
 \begin{cor} \label{UI}
 Let $f$ be as in Theorem \ref{vol} and $\phi$ be a qpsh function on $X$. Then the sequence $\{\frac{1}{\lambda_1(f)^n}\phi \circ f^n\}$ is uniformly integrable.
 \end{cor}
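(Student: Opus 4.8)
The plan is to deduce uniform integrability of $\{\lambda^{-n}\phi\circ f^n\}$ from the volume estimate in Theorem \ref{vol} together with the weak-$L^1$ (Chebyshev-type) control that quasi-plurisubharmonic functions enjoy on sublevel sets. First I would normalize: replacing $\phi$ by $\phi-\sup_X\phi$ we may assume $\phi\le 0$, and after rescaling we may assume $dd^c\phi\ge-\omega$, i.e. $\phi\in Psh(X,\omega)$. Set $\phi_n:=\lambda^{-n}\phi\circ f^n$; since $\phi\le 0$ these are all $\le 0$, so uniform integrability amounts to showing that $\sup_n\int_{\{\phi_n<-t\}}|\phi_n|\,dV\to 0$ as $t\to\infty$, or equivalently (by a standard layer-cake / tail argument) that $\sup_n\mathrm{Vol}(\{\phi_n<-t\})$ decays fast enough in $t$ — in fact a bound like $\mathrm{Vol}(\{\phi_n<-t\})\le A e^{-\varepsilon t}$ uniformly in $n$ would suffice, since then $\int_{\{\phi_n<-t\}}|\phi_n|\,dV$ is controlled by $\int_t^\infty \mathrm{Vol}(\{\phi_n<-s\})\,ds + t\,\mathrm{Vol}(\{\phi_n<-t\})$.

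The key point is to translate a sublevel set of $\phi_n$ into a sublevel set of $\phi$ via the dynamics. Observe that $\{\phi_n<-t\}=\{x:\phi(f^n(x))<-\lambda^n t\}=f^{-n}(\{\phi<-\lambda^n t\})$ (up to the indeterminacy set $I_{f^n}$, which has measure zero). Write $\Omega_t:=\{\phi<-\lambda^n t\}$. By the uniform Skoda/Zeriahi integrability theorem applied to the compact family $\mathcal F=\{\varphi\in Psh(X,\omega):\sup_X\varphi=0\}$, there are constants $A,\varepsilon>0$ (depending only on $\{\omega\}$) with $\mathrm{Vol}(\{\phi<-s\})\le A e^{-\varepsilon s}$ for all $s>0$; hence $\mathrm{Vol}(\Omega_t)\le A e^{-\varepsilon\lambda^n t}$. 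Now I apply Theorem \ref{vol} in the contrapositive: since $f^n(\{\phi_n<-t\}\setminus I_{f^n})\subset \Omega_t$, we get $\mathrm{Vol}(\Omega_t)\ge \mathrm{Vol}(f^n(\{\phi_n<-t\}))\ge (C_1\,\mathrm{Vol}(\{\phi_n<-t\}))^{C_2\lambda^n}$, so that
\begin{equation*}
\bigl(C_1\,\mathrm{Vol}(\{\phi_n<-t\})\bigr)^{C_2\lambda^n}\le A\,e^{-\varepsilon\lambda^n t}.
\end{equation*}
Taking the $(C_2\lambda^n)$-th root kills the $n$-dependence in the exponent: $C_1\,\mathrm{Vol}(\{\phi_n<-t\})\le A^{1/(C_2\lambda^n)}\,e^{-\varepsilon t/C_2}$, and since $\lambda>1$ the factor $A^{1/(C_2\lambda^n)}$ stays bounded (by $\max(A,1)$, say). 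This yields $\mathrm{Vol}(\{\phi_n<-t\})\le C_3\,e^{-\varepsilon t/C_2}$ with $C_3$ independent of $n$, which is exactly the uniform exponential tail bound we wanted.

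Finally I would assemble the uniform integrability statement: given $\eta>0$, pick $T$ so large that $\int_T^\infty C_3 e^{-\varepsilon s/C_2}\,ds + T\,C_3 e^{-\varepsilon T/C_2}<\eta$; then for the set $E=\{\phi_n<-T\}$ (which has $\mathrm{Vol}(E)\le C_3 e^{-\varepsilon T/C_2}$, again uniform in $n$) one gets $\int_E|\phi_n|\,dV<\eta$ for every $n$, and since any measurable set of small measure is contained, up to the part where $|\phi_n|\le T$, in such an $E$, the family is uniformly integrable. I expect the main obstacle to be purely bookkeeping: making sure the constants genuinely do not depend on $n$ when extracting the root (this is where $\lambda>1$ is essential and where one must be a little careful that $C_1,C_2$ from Theorem \ref{vol} are themselves $n$-independent, which they are) and handling the measure-zero indeterminacy sets so that the set identity $\{\phi_n<-t\}=f^{-n}(\Omega_t)$ and the inequality $f^n(\{\phi_n<-t\})\subset\Omega_t$ hold up to null sets. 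There is no deep difficulty beyond Theorem \ref{vol} and uniform Skoda; the corollary is essentially a repackaging of the sharp volume-contraction estimate.
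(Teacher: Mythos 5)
Your argument is correct and follows essentially the same route as the paper: normalize $\phi$, use the dynamical inclusion $f^n(\{\lambda^{-n}\phi\circ f^n<-t\})\subset\{\phi<-t\lambda^n\}$, combine the exponential sublevel-set volume decay of the fixed qpsh function $\phi$ (the paper cites Kiselman where you cite uniform Skoda/Zeriahi, but either gives the needed bound) with Theorem \ref{vol}, and finish by a layer-cake tail estimate; the key observation that taking the $(C_2\lambda^n)$-th root removes the $n$-dependence is exactly what the paper uses implicitly. The only cosmetic difference is that the paper leaves the tail bound $\exp(-C_1\alpha)(\alpha+C_2)$ in unintegrated form instead of spelling out the uniform exponential decay of $\mathrm{Vol}(\{\phi_n<-t\})$ as you do.
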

 \begin{proof}
 By scaling and translating $\phi$ if necessary we may assume that $dd^c\phi\geq -\omega$ and $\phi \leq 0.$ Let $\lambda:=\lambda_1(f).$  For $\alpha>0$ and $n$ is fixed we set $E^n_{\alpha}:=\{\frac{1}{\lambda^n}\phi\circ f^n<-\alpha\}$ then
 \begin{eqnarray*}
 \displaystyle
 \int_{E^n_{\alpha}}-\frac{1}{\lambda^n}\phi \circ f^n dV= \alpha\ Vol(E^n_{\alpha}) + \int_{\alpha}^\infty Vol(E^n_t) dt
 \end{eqnarray*}
 By \cite[Theorem 3.1]{Ki}there exists constants $A,B>0$, independent of $n$, such that $$Vol(\phi<-t \lambda^n)\leq A\exp(-Bt\lambda^n).$$ Since
 $$f^n(\{\frac{1}{\lambda^n}\phi\circ f^n<-t\}) \subset \{\phi<-t \lambda^n\}$$ we infer from Theorem \ref{vol}  that 
 \begin{eqnarray*}
 \displaystyle
 \int_{E^n_{\alpha}}-\frac{1}{\lambda^n}\phi \circ f^n dV  \leq  \exp(-C_1 \alpha) (\alpha+ C_2).
\end{eqnarray*}
for some constants $C_1,C_2>0$ independent of $n$.
 \end{proof}

 \section{Green Currents}
Green currents play an impotant role in the dynamical study of meromorphic maps %Construction of Green currents has a long history in complex dynamics 
(see \cite{Si,G04,DDG} and references therein). In the case of bidegree $(1,1)$, we have the following existance theorem:%one of the main results in \cite{B} provides a general construction of an $f^*$-invariant \textit{Green current} which represents an expanding direction for $f^*$ in the cohomology and has good convergence properties:
 \begin{thm}\cite{B} \label{B}
Let $f:X\dashrightarrow X$ be a 1-regular dominant meromorphic map and $\alpha \in H^{1,1}_{psef}(X,\mathbb{R})$ such that $f^*\alpha =\lambda \alpha$ for some $\lambda > 1$. If 
\begin{equation*} 
\frac{1}{\lambda^{n}} \vmin \circ  f^{n} \rightarrow  0 \ \text{in} \ L^1(X) \tag{$\star$}
\end{equation*}
for some (equivalently for every) smooth representative $\theta$ of $\alpha$ then  there exists a positive closed $(1,1)$ current $T_{\alpha}\in \alpha$ such that for every smooth form $\eta \in \alpha$ $$\lim_{n\rightarrow \infty}\frac{1}{\lambda^n}(f^{n})^*\eta \to T_{\alpha}$$ in the sense of currents.
Furthermore, 
\begin{itemize}
\item[(1)] $T_{\alpha}$ is minimally singular among the invariant currents which belong to the class $\alpha$.
\item[(2)] $T_{\alpha}$ is extreme within the cone of positive closed (1,1) currents whose cohomology class belongs to $\mathbb{R}^+\alpha$.
\end{itemize}
 \end{thm}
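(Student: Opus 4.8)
The plan is to transport the pull-back dynamics to the level of potentials and to produce $T_\alpha$ as a decreasing limit of the currents $\tfrac1{\lambda^n}(f^n)^*T_{min}$, where $T_{min}=\theta+dd^c\vmin$ is the canonical minimally singular current in $\alpha$. Fix a smooth representative $\theta\in\alpha$. Since $f^*\theta$ is a closed $(1,1)$-current of order zero whose de Rham class is $\lambda\alpha$, the $dd^c$-lemma yields a quasi-psh function $g$, normalized by $\sup_X g=0$, with $f^*\theta=\lambda\theta+dd^c g$. Introduce the affine operator
\[
\Lambda(u):=\tfrac1\lambda\bigl(g+u\circ f\bigr),\qquad u\in\psh .
\]
By construction $\theta+dd^c(\Lambda^n u)=\tfrac1{\lambda^n}(f^n)^*(\theta+dd^c u)$, and $1$-regularity gives $\Lambda^n u=\Lambda^n 0+\lambda^{-n}\,u\circ f^n$; moreover $\Lambda$ maps $\psh$ into itself (because $f^*$ preserves positivity of currents), it is order preserving, and $\Lambda^n u-\Lambda^n v=\lambda^{-n}(u-v)\circ f^n$. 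The Green current will be $T_\alpha=\theta+dd^c\varphi_\infty$ for a suitable $\theta$-psh $\varphi_\infty$.

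The first step is a monotonicity statement along $\vmin$. Since $\vmin\le0$ and $g\le0$ we have $\Lambda(\vmin)=\tfrac1\lambda g+\tfrac1\lambda\vmin\circ f\le0$, and as $\Lambda(\vmin)\in\psh$ the extremal characterization of $\vmin$ forces $\Lambda(\vmin)\le\vmin$. Iterating the order-preserving $\Lambda$, the sequence $\{\Lambda^n\vmin\}_n$ is \emph{decreasing} (as is $\{\Lambda^n0\}_n$). Hence $\varphi_\infty:=\lim_n\Lambda^n\vmin$ exists pointwise and is either genuinely $\theta$-psh or identically $-\infty$.

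The decisive step --- and the one I expect to be the main obstacle --- is to rule out $\varphi_\infty\equiv-\infty$; this is exactly where the hypothesis $(\star)$ is used, together with the volume/integrability machinery of Section~3. Since $\Lambda^n\vmin-\Lambda^n0=\lambda^{-n}\vmin\circ f^n\to0$ in $L^1$ by $(\star)$, it suffices to bound $\int_X\Lambda^n0\,dV$ from below, equivalently to establish $\sum_{j\ge0}\lambda^{-(j+1)}\norm{g\circ f^j}_{L^1(X)}<\infty$. The naive estimate $\norm{g\circ f^j}_{L^1}\lesssim\delta_1(f^j)$ is too weak: one must exploit the $f^j$-twisted quasi-psh structure of $g\circ f^j$, as in the proof of Lemma~\ref{dsh}, and feed it into the uniform Skoda--Chebyshev bound underlying Theorem~\ref{vol} and the uniform integrability of $\{\lambda^{-n}(\cdot)\circ f^n\}$ from Corollary~\ref{UI}. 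Granting $\varphi_\infty\in\psh$, the convergence $\Lambda^n\vmin\downarrow\varphi_\infty$ holds in $L^1$ by dominated convergence, so $\tfrac1{\lambda^n}(f^n)^*T_{min}\to T_\alpha:=\theta+dd^c\varphi_\infty$ weakly. Using $(\star)$ once more, $\tfrac1{\lambda^n}(f^n)^*\theta=\tfrac1{\lambda^n}(f^n)^*T_{min}-dd^c(\lambda^{-n}\vmin\circ f^n)\to T_\alpha$; and for an arbitrary smooth $\eta=\theta+dd^c h\in\alpha$, $\lambda^{-n}h\circ f^n\to0$ uniformly since $h$ is bounded on the compact $X$, whence $\tfrac1{\lambda^n}(f^n)^*\eta\to T_\alpha$. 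Pushing $\tfrac1\lambda f^*$ through the limit (weak continuity of $f^*$ on positive currents) shows $T_\alpha$ is invariant, and since $\Lambda^n0\to\varphi_\infty$ in $L^1$ and $u\mapsto u\circ f$ is $L^1$-continuous by \cite[Prop.~1.3]{G04}, one even gets $\Lambda\varphi_\infty=\varphi_\infty$ exactly.

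It remains to derive (1) and (2). For (1): if $S\in\alpha$ is invariant with $\theta$-psh potential $u$ normalized by $\sup_X u=0$, then $\Lambda u=u+c$ for a constant $c\le0$ (because $\Lambda u\le\tfrac1\lambda\sup_X(g+u\circ f)\le0$), so $\Lambda^n u=u+c\sum_{j<n}\lambda^{-j}\to u+\tfrac{c\lambda}{\lambda-1}$; since $u\le0$ gives $u\le\vmin$ and hence $\Lambda^n u\le\Lambda^n\vmin$, letting $n\to\infty$ yields $u\le\varphi_\infty+O(1)$, i.e.\ $T_\alpha$ is less singular than $S$. For (2): given $T_\alpha=S_1+S_2$ with $S_i\ge0$ closed and $\{S_i\}\in\mathbb{R}^+\alpha$, the degenerate cases are immediate, so one may assume $\{S_i\}=c_i\alpha$ with $c_i\in(0,1)$; setting $\widetilde S_i:=c_i^{-1}S_i\in\alpha$ one has $\widetilde S_i\le c_i^{-1}T_\alpha$, whose $\theta$-psh potentials satisfy $u_i\ge c_i^{-1}\varphi_\infty-O(1)$ as well as $u_i\le\vmin+O(1)$, while $\varphi_\infty=c_1u_1+c_2u_2$ up to a constant. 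Using $\Lambda^n u_i=\Lambda^n0+\lambda^{-n}u_i\circ f^n$, the convergences $\Lambda^n0\to\varphi_\infty$, $\lambda^{-n}\varphi_\infty\circ f^n\to0$, and the exact invariance $\Lambda^n\varphi_\infty=\varphi_\infty$ from the previous paragraph, a squeeze between the two envelopes forces $\Lambda^n u_i\to\varphi_\infty$, hence $\tfrac1{\lambda^n}(f^n)^*\widetilde S_i\to T_\alpha$; a rigidity argument for invariant currents with minimal singularities in $\mathbb{R}^+\alpha$ (ultimately: a bounded $h$ with $\tfrac1\lambda h\circ f=h+\mathrm{const}$ must be constant) then upgrades this to $S_i=c_iT_\alpha$, so $T_\alpha$ is extreme. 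I would carry out (2) last, as the bookkeeping with normalizing constants is the most delicate, though not the conceptually hardest, part of the argument.
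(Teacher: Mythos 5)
Theorem~\ref{B} is not proved in the present paper: it is quoted from the author's earlier paper \cite{B}, and here only its conclusion and two auxiliary results from \cite{B} are invoked, so there is no in-paper proof to compare your argument against. Judged on its own, your overall strategy --- pass to potentials, iterate the affine operator $\Lambda$, exploit monotonicity of $\Lambda^n\vmin$, and use $(\star)$ to prevent collapse --- is the natural one, but as written the argument does not close, and you yourself signal this with the phrase ``Granting $\varphi_\infty\in\psh$.''

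Two concrete problems. First, the function $g$ defined by $f^*\theta=\lambda\theta+dd^c g$ need not be quasi-psh: for a general meromorphic $f$ and a general smooth representative $\theta$, $f^*\theta=(\pi_1)_*(\pi_2)^*\theta$ is only a closed current of order zero, so $g$ is a difference of two quasi-psh functions and can blow up to $+\infty$ along $I_f$. Hence the normalization $\sup_X g=0$ and the deduction $\Lambda(\vmin)\le 0\Rightarrow \Lambda(\vmin)\le\vmin$ are not justified. (This is repairable: $\Lambda(\vmin)$ is the $\theta$-psh potential of $\lambda^{-1}f^*\Tmin\ge 0$, hence bounded above, and minimality of $\vmin$ gives $\Lambda(\vmin)\le\vmin+c_1$; shifting $\vmin$ by $c_1\lambda/(\lambda-1)$ restores monotonicity.) Second, and decisively, your treatment of the degeneracy $\varphi_\infty\equiv-\infty$ is only a sketch. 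The reduction to $\sum_{j\ge 0}\lambda^{-(j+1)}\norm{g\circ f^j}_{L^1(X)}<\infty$ is correct, but you concede that the only bound available, $\norm{g\circ f^j}_{L^1}\lesssim\delta_1(f^j)\asymp\lambda^j$, makes the partial sums grow linearly, and the proposed fix --- ``feed the twisted qpsh structure into Skoda--Chebyshev'' --- is only a gesture: neither Theorem~\ref{vol} nor Corollary~\ref{UI} improves $O(\lambda^j)$ to $o(\lambda^j)$, and rewriting $g=\lambda\,\Lambda(\vmin)-\vmin\circ f$ merely transfers the burden to $\norm{\Lambda(\vmin)\circ f^j}_{L^1}$, which $(\star)$ does not control because $\Lambda(\vmin)$ can be strictly more singular than $\vmin$. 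The rest of the proposal (the affine bookkeeping, part~(1), and the rigidity lemma you invoke in part~(2), which is correct) is plausible but entirely conditional on this unproved existence of $\varphi_\infty$; the missing idea --- how $(\star)$ actually forces convergence of $\Lambda^n\vmin$ --- is precisely the content of the theorem.
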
 
The dynamical assumption $(\star)$ holds in a quite general setting. For instance, if the class $\alpha$ can be represented by a smooth semi-positive form then $\vmin$ is identically zero on $X$ and hence $(\star)$ holds. This is always the case when $X$ is a complex homogeneous manifold.
\subsection{Normal Maps}
\indent We say that a point $x \in X$ is \textit{normal} if there exists disjoint open sets $x \in U$ and $\overline{\mathcal{I_1}} \subset V$ such that $f^n(U)\cap V=\emptyset$ for every $n\in \mathbb{N}.$ We denote the set of normal points by $\mathcal{N}_f.$ We say that $f$ is normal if $\mathcal{N}_f=X\setminus\overline{\mathcal{I_1}}.$ Complex H\'enon maps or more generally regular polynomial automorphisms of $\Bbb{C}^k$ are among the exmples of normal mappings (see \cite{BS,Si}). 
 \begin{prop} \label{normal}
 Let $f:X\to X$ be a dominant 1-regular meromorhic map. Assume that $\lambda:=\lambda_1(f)>1$ is a simple eigenvalue of $f^*|_{H^{1,1}(X,\mathbb{R})}$ and $f^*\alpha=\lambda \alpha$ such that $\alpha \in \bigc$. If $f$ is normal and $Vol( \overline{\mathcal{I_1}(f)})=~0$ then $(\star)$ holds.
 \end{prop}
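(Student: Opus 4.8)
The plan is to reduce $(\star)$ to a uniform estimate on a large compact set disjoint from $\overline{\mathcal{I_1}}$, using the volume contraction of Theorem \ref{vol} (via Corollary \ref{UI}) only to dispose of a set of small measure. Fix a smooth $\theta\in\alpha$ and let $\vmin$ denote the associated minimally singular potential, so $\vmin\le 0$ and $dd^c\vmin\ge-\theta$; in particular $\vmin$ is qpsh. Since $\vmin\le 0$ we have $\norm{\tfrac{1}{\lambda^n}\vmin\circ f^n}_{L^1(X)}=\tfrac{1}{\lambda^n}\int_X(-\vmin)\circ f^n\,dV$, so $(\star)$ is \emph{equivalent} to $\int_X(-\vmin)\circ f^n\,dV=o(\lambda^n)$. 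Because $f$ is $1$-regular and $\lambda$ is the spectral radius of $f^*$ we have $\delta_1(f^n)\le C\lambda^n$, so Theorem \ref{vol} and Corollary \ref{UI} apply; in particular $\{\tfrac{1}{\lambda^n}\vmin\circ f^n\}_n$ is uniformly integrable: for each $\varepsilon>0$ there is $\delta>0$ with $\int_E(-\vmin)\circ f^n\,dV<\varepsilon\lambda^n$ whenever $Vol(E)<\delta$, uniformly in $n$.

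Next I would bring in normality. Given $\varepsilon>0$, take the corresponding $\delta$; since $Vol(\overline{\mathcal{I_1}})=0$, choose an open $W\supseteq\overline{\mathcal{I_1}}$ with $Vol(W)<\delta$ and set $K:=X\setminus W$, a compact subset of $\mathcal{N}_f$. For $x\in K$ pick normal data $x\in U_x$, $\overline{\mathcal{I_1}}\subset V_x$ with $f^n(U_x)\cap V_x=\emptyset$ for all $n$; by compactness extract a finite subcover $K\subseteq U_{x_1}\cup\dots\cup U_{x_m}$ and put $V_0:=\bigcap_i V_{x_i}$, an open neighbourhood of $\overline{\mathcal{I_1}}$. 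Then each $f^n$ is holomorphic on a neighbourhood of $K$, and since $x\in U_{x_i}$ forces $f^n(x)\notin V_{x_i}\supseteq V_0$, we obtain the uniform escape $f^n(K)\subseteq X\setminus V_0$ for all $n$, with $X\setminus V_0$ compact.

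The decisive step, which I expect to be the main obstacle, is the inclusion
$$X\setminus Amp(\alpha)\ \subseteq\ \overline{\mathcal{I_1}}\,,\qquad\text{equivalently}\qquad Amp(\alpha)\supseteq\mathcal{N}_f .$$
Granting it, $V_0\supseteq X\setminus Amp(\alpha)$, so $X\setminus V_0$ is a compact subset of $Amp(\alpha)$. If $T_K\in\alpha$ is a K\"ahler current with analytic singularities and $X\setminus Amp(\alpha)=E_+(T_K)$ (such a current exists by \cite{Dem,Bou}), then on $Amp(\alpha)$ its potential $\psi_K$ is smooth and $\psi_K-O(1)\le\vmin\le 0$; hence $\vmin$ is bounded on the compact set $X\setminus V_0$, say $\vmin\ge-M_0$ there. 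Therefore
$$\int_X(-\vmin)\circ f^n\,dV=\int_K(-\vmin)\circ f^n\,dV+\int_{X\setminus K}(-\vmin)\circ f^n\,dV\ \le\ M_0+\varepsilon\lambda^n ,$$
using $f^n(K)\subseteq X\setminus V_0$ in the first term and uniform integrability ($Vol(X\setminus K)<\delta$) in the second. Dividing by $\lambda^n$ and letting $n\to\infty$ gives $\limsup_n\tfrac{1}{\lambda^n}\int_X(-\vmin)\circ f^n\,dV\le\varepsilon$; as $\varepsilon$ is arbitrary, $(\star)$ holds.

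It remains to prove $Amp(\alpha)\supseteq\mathcal{N}_f$, which is where $\alpha$ big, $f$ normal and $1$-regular, and $\lambda$ simple must genuinely enter. My plan would be: fix $x\in\mathcal{N}_f$ and the K\"ahler current $T_K\in\alpha$ above; the currents $R_n:=\tfrac{1}{\lambda^n}(f^n)^*T_K$ lie in $\alpha$ (since $(f^*)^n\alpha=\lambda^n\alpha$), are positive closed, and have mass $\int_X T_K\wedge\omega^{k-1}$ independent of $n$, while on the normal neighbourhood $U_x$ each $f^n$ is holomorphic with image in the fixed compact set $X\setminus V_x$, so $R_n|_{U_x}=(f^n|_{U_x})^*T_K$. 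One then checks that the Lelong numbers of $R_n$ on $U_x$ are bounded above by $\lambda^{-n}$ times Lelong numbers of $T_K$ along $(f^n)^{-1}(x)$, hence tend to $0$, and extracts from $\{R_n\}$ a weak limit which — after a harmless regularization by $\varepsilon\omega$ inside $\alpha$ — is a K\"ahler current in $\alpha$ with vanishing Lelong number at $x$, forcing $x\in Amp(\alpha)$. The delicate points are to control the \emph{potentials} of the $R_n$ on $U_x$ (not just their masses) and to upgrade a positive closed weak limit to a genuine K\"ahler representative near $x$ in the absence of normality of $\{f^n|_{U_x}\}$; alternatively this inclusion may be deduced from the structure theory of big invariant classes under $1$-regular maps, in the spirit of \cite{DF,B}.
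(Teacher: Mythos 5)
Your overall reduction is fine: uniform integrability (Corollary \ref{UI}) handles $X\setminus K$ of small volume, and the normality/compactness argument producing $V_0$ with $f^n(K)\subset X\setminus V_0$ for all $n$ is correct. But the whole proof hinges on the inclusion $X\setminus Amp(\alpha)\subseteq\overline{\mathcal{I_1}}$ (equivalently, that $\vmin$ is bounded on compact subsets of $X\setminus\overline{\mathcal{I_1}}$), and this is precisely the point you do not prove; as it stands this is a genuine gap, not a technicality. The sketch you offer does not close it: a weak limit of $R_n=\lambda^{-n}(f^n)^*T_K$ need not be a K\"ahler current (the lower bound $\lambda^{-n}(f^n)^*(\epsilon\omega)$ does not dominate any fixed Hermitian form), and under weak convergence Lelong numbers satisfy only $\nu(R,x)\geq\limsup_n\nu(R_n,x)$ — they can jump up in the limit (smooth forms can converge to a current of integration) — so knowing $\nu(R_n,x)\to0$ gives no control at $x$ for the limit, let alone membership of $x$ in $Amp(\alpha)$, which requires exhibiting a single K\"ahler current in $\alpha$ with locally bounded potential near $x$. (A smaller inaccuracy: $\delta_1(f^n)\leq C\lambda^n$ does not follow from $1$-regularity and $\lambda$ being the spectral radius alone — one needs the simplicity hypothesis, or a Perron--Frobenius argument on the psef cone, to exclude polynomial factors.)

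Note also that what your argument needs is strictly stronger than what the paper uses. The paper's proof invokes only $E_+(\Tmin)\subseteq\mathcal{I_1}(f)$, quoted from \cite[Theorem 3.3]{B}, i.e. vanishing Lelong numbers of the minimally singular current off $\overline{\mathcal{I_1}}$; this does \emph{not} imply that $\vmin$ is bounded there (a qpsh function can have identically zero Lelong numbers and still be unbounded), which is what your estimate $\int_K(-\vmin)\circ f^n\,dV\leq M_0$ requires. The paper avoids any boundedness claim: with zero Lelong numbers on $X\setminus V$, the uniform Skoda/Kiselman estimate \cite{Ze,Ki} gives $Vol(\{x\in X\setminus V:\ \vmin<-t\lambda^n\})\leq B\exp(-At\lambda^n)$ for every $A>0$, and combining this with the volume contraction of Theorem \ref{vol} applied to $U\cap\Omega_{n,t}$ forces $Vol(U\cap\Omega_{n,t})\to0$, which together with Corollary \ref{UI} and $Vol(\overline{\mathcal{I_1}})=0$ yields $(\star)$. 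So to salvage your softer route you must actually prove $\mathcal{N}_f\subseteq Amp(\alpha)$ under these hypotheses (not established in the paper or the references it uses here); otherwise you should weaken your key step to the Lelong-number statement of \cite[Theorem 3.3]{B}, at which point you are driven back to the sublevel-set volume estimates, i.e. essentially the paper's proof.
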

 \begin{proof}
 Let $\theta$ be a fixed smooth representative for $\alpha$ and $T_{\theta}:= \theta+ dd^c \vmin$ denote the minimally singular current. It follows from \cite[Theorem 3.3] {B} that 
 $$ E_+(\Tmin)\subset  \mathcal{I_1}(f).$$
 Let $$ \Omega_{n, t}=\{x\in X: \fraction{1}{\lambda^{n}} \vmin \circ f^n(x) <-t \}. $$
 Since $Vol(\overline{\mathcal{I_1}(f)})=0$ by Corollary \ref{UI} it is enough to prove that $Vol(U \cap \Omega_{n,t})\rightarrow 0$ as $n \rightarrow \infty$ for every relatively compact subset $U \subset X\setminus \overline{\mathcal{I_1}(f)}.$ 
 For given such $U$ by normality of $f$, there exists an open subset $V \subset X$ such that $\overline{\mathcal{I_1}(f)} \subset V$ and $f^n(U) \cap V=\emptyset$ for every $n \in \mathbb{N}.$ Notice that,
 $$f^n(U \cap \Omega_{n,t}) \subset \{x\in X\setminus V: \vmin<-t \lambda^n\}.$$  
 Since $\nu(\Tmin,x)=0$ for every $x\in X\setminus V$, it follows from Theorem \ref{vol} and \cite[Theorem 3.1]{Ki} that for every $A>0$ there exists $B>0$ such that  
$$(C_1Vol(U\cap\Omega_{n,t}))^{C_2\lambda^n}\leq B \exp(-At\lambda^n)$$
where $C_1,C_2>0$ as in Theorem \ref{vol}. Since $A>0$ can be chosen arbitrarily large the assertion follows. 
 \end{proof}
We remark that the assumption $Vol( \overline{\mathcal{I_1}(f)})=0$ in Proposition \ref{normal} is a non-trivial condition (cf. \cite[\S 6.1]{DG}).
\section{Equidistribution towards Green Current}
%In this section we will consider the following problem: Let $(X,f)$ be as in Theorem \ref{B}. Provide sufficient conditions on the set of positive closed $(1,1)$ currents $S$ on $X$ such that $$\frac{1}{\lambda^n}(f^n)^*S \to T_{\alpha}$$
%in the sense of currents. %In this case, we say that the positive closed current $S$ \textit{equidistributes towards Green current} $T_{\alpha}.$
 \subsection{Equidistribution of Zeros of Sections } \label{random}
Recall that a compact K\"ahler manifold $X$ carries a big line bundle if and only if $X$ is projective. Let $X$ be a projective manifold and $\pi:L\to X$ be a hermitian holomorphic line bundle. We denote the semi-group
$$N(L):=\{m\in \Bbb{N}: H^0(X,L^{\otimes m})\not=0\}.$$ Given $m\in N(L),$ we consider the \textit{canonical map} induced by the complete linear series $|L^{\otimes m}|$  $$\Phi_m:X \dashrightarrow \Bbb{P}^{N_m}$$
$$x \to [s_0(x):s_1(x):\dots :s_{N_m}(x)]$$  
where the identification $\Bbb{P}H^0(X,L^{\otimes m})^{\star}=\Bbb{P}^{N_m}$ is determined by  the choice of the basis $s_0,\dots,s_{N_m}$ for $H^0(X,L^{\otimes m}).$ Note that $\Phi_m$ is a rational map which is holomorphic on the complement of the base locus $B_{|L^{\otimes m}|}:=\bigcap_{s\in H^0(X,L^{\otimes m})}s^{-1}(0).$ We also let $Y_m:=\Phi_m(X)\subset \Bbb{P}^{N_m}$ be the image of the closure of graph of $\Phi_m$. Recall that a line bundle $L$ is called \textit{semi-ample} if $B_{|L^{\otimes m}|}=\emptyset$ for some $m>0.$ The \textit{Kodaira-Iitaka dimension} of $L$ is defined to be $$\kappa(X,L):=\max_{m\in N(L)}\dim Y_m$$
if $N(L)\not=\emptyset$ otherwise we set $\kappa(X,L)=-\infty.$ \textit{Kodaira dimension} of $X,$ denoted by $kod(X),$ is defined to be $\kappa(X,K_X)$ where $K_X$ is the canonical bundle of $X$. Notice that kod(X) is equal to one of $-\infty,0,1,\dots, k.$  
 %Recall that a holomorphic line bundle $L$ is called \textit{semi-ample} if $L^{\otimes m}$ is globally generated by sections for some $m>0.$ Then for any such $m$ the linear system $|L^{\otimes m}|$ induces a holomorphic map $\Phi_m:X\to \Bbb{P}^{N_m}.$ 
 A line bundle $L$ is called \textit{big} if $\kappa(X,L)=k.$ We stress that this definition is consistent with the one in Section \ref{prelim} that is $L$ is big if and only if $c_1(L)$ contains a K\"ahler current \cite{B02}. %It is a consequence of Iitaka fibration theorem that a line bundle $L$ is big if and only if the mapping $\Phi_m$ is birational onto its image for some $m>0$ (\cite{L}). 
It is well-known that $L$ is big if and only if there exists $C>0$ such that $h^0(X,L^{\otimes m})\geq Cm^k$ for sufficiently large $m>0$ (see \cite{L}). \\ \indent %By Kodaira embedding theorem, $X$ is projective if and only if it admits an ample line bundle. Moreover, $X$ is Moishezon (i.e bimeromorphic to a projective manifold) if and only if the intersection of $\bigc$ with $NS_{\Bbb{R}}(X)$ is non-empty \cite{DP}.\\ \indent 
Let $h$ be a fixed smooth metric on $L$ and $\theta_h$ denote its curvature form which is a globally well-defined smooth $(1,1)$ form representing the first Chern class $c_1(L)\in H^2(X,\Bbb{Z})$ which we consider as an element of $\cohomology$ by identifying it with its image under the mapping $i:H^2(X,\Bbb{Z})\to \cohomology$ induced by $i:\Bbb{Z}\to \Bbb{R}.$ 
  The metric $h$  induces a metric on $L^{\otimes m}$ which we denote  by $h_m$. %For $s=\{s_{\alpha}\} \in H^0(X,L^{\otimes m})$ we define its $h_m$-norm by $|| s||_{h_m}:=|s_{\alpha}|e^{-m\psi_{\alpha}}$ on $U_{\alpha}.$ Note that this definition is independent of $\alpha$ due to the compatibility conditions $s_{\alpha}=s_{\beta}.g_{\alpha\beta}$ on $U_{\alpha\beta}$. 
 We also define an $L^2$-norm on $H^0(X,L^{\otimes m})$ by
  $$||s||^2_{L^2(dV,L^{\otimes m})}:=\int_X||s||^2_{h_m} dV$$
and  we  write $|s|$ for $||s||_{L^2(dV,L^{\otimes m})}$ for short. \\ \indent
  % Moreover, $L^{\otimes m}=\Phi_m^*(\mathcal{O}_{\Bbb{P}^N}(1))$ and $H^0(X,L^{\otimes m})=\Phi_m^*(H^0(\Bbb{P}^N\mathcal{O}_{\Bbb{P}^N}(1)).$ %We also let $SH^0(X,L^m)$ denote the unit sphere in $H^0(X,L^m)$ relative to this inner product and $d\sigma_m$ denotes the Haar measure on $SH^0(X,L^m)$.
%Then by Poincar\'e-Lelong formula locally we can write $$[Z_s]=dd^c\log|s_{\alpha}|$$ hence, $$[Z_s]=m\theta_h+dd^c\log||s||_{h_m}$$ where the equality follows from compatibility conditions. Thus, we conclude 
For $s\in H^0(X,L^{\otimes m})$ we let $[Z_s]$ denote the current of integration along the zero divisor of $s.$ 
Note that $\frac{1}{m}[Z_s]$ is a positive closed $(1,1)$ current representing the class  $c_1(L).$ \\ \indent
Now, we define $$SH^0(X,L^{\otimes m}):=\{s \in H^0(X,L^{\otimes m}): |s|=1\}$$
to be the unit sphere in the Hilbert space $H^0(X,L^{\otimes m}).$ We identify $SH^0(X,L^{\otimes m})$ with the $(2N_m-1)$ sphere in $\Bbb{C}^{N_m}$ and regard it as a probability space endowed with image the unitary invariant measure denoted by $\mu_m$ (or equivalently we may consider $\Bbb{P}H^0(X,L^{\otimes m})$ furnished with the Fubini-Study volume form). 

% \begin{thm}\label{finite}
 %Let $f:X\to X$ be a 1-regular dominant rational map and $\lambda:=\lambda_1(f)>1$ be simple eigenvalue of $f^*$ with $f^*\alpha=\lambda\alpha$ where $\alpha=c_1(L)$ for some big line bundle $L\to X$. Assume that $L\cdot C \geq 0$ for every irreducible algebraic curve $C\subset \pi_2\circ\pi_1^{-1}(I_f).$ If the algebra $$R(L)=\bigoplus_{m\geq0}H^0(X,L^{\otimes m})$$ is finitely generated then there exists $m>0$ and a pluripolar set $\mathcal{E}_m\subset |L^{\otimes m}|$ such that for every $H\not\in \mathcal{E}_m$ 
 %and any minimally singular current $T_{min} \in c_1(L)$
  %$$\lambda^{-n}(f^n)^*(\frac{1}{m}[H]) \to T_{\alpha}$$
  %in the sense of currents as $n\to \infty.$ 
 %\end{thm}  
The following proposition is well-known by algebraic geometers \cite{BouP}. However, we were unable to find the statement and the proof in the literature.
\begin{prop}\label{minimal}
The algebra $$R(L)=\bigoplus_{m\geq0}H^0(X,L^{\otimes m})$$ is finitely generated if and only if $$T_m:=\int_{SH^0(X,L^{\otimes m})}\frac{1}{m}[Z_s] d\mu_m(s)$$ is a minimally singular current in $c_1(L)$ for some $m>0.$
\end{prop}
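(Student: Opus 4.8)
The plan is to analyze the current $T_m$ via the expected potential, relate it to the Kodaira–Iitaka data of $L^{\otimes m}$, and compare with the minimally singular potential. The key observation is that for $s\in H^0(X,L^{\otimes m})$ the current $\frac1m[Z_s]$ has global potential $\frac1m\log\|s\|_{h_m}$ (up to the fixed curvature form $\theta_h$), so by Fubini
$$
T_m = \theta_h + dd^c\varphi_m, \qquad
\varphi_m(x) = \frac1m \int_{SH^0(X,L^{\otimes m})} \log\|s(x)\|_{h_m}\, d\mu_m(s).
$$
The inner integral is, up to an additive constant $c_m$ depending only on $m$ and the $L^2$-normalization, equal to $\frac{1}{2m}\log B_m(x)$ where $B_m(x)=\sum_{j=0}^{N_m}\|s_j(x)\|^2_{h_m}$ is the Bergman kernel on the diagonal for the basis $s_0,\dots,s_{N_m}$; indeed integrating $\log|\langle s, \cdot\rangle|^2$ of a unit vector against the unitary-invariant measure on the sphere gives a universal constant plus $\log$ of the squared norm of the evaluation functional. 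Thus $T_m = \Phi_m^*\omega_{FS}$ (pulled back via the canonical map, with the Fubini–Study form normalized by $h$), a standard identity. Consequently $T_m$ has minimal singularities in $c_1(L)$ if and only if $\Phi_m^*\omega_{FS}$ does.

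**For the forward direction**, suppose $R(L)$ is finitely generated. Then by a theorem of Zariski (the finitely generated case of the classical result) the section ring stabilizes: there is $m_0$ such that for $m$ divisible by $m_0$ the map $\Phi_m$ factors through a fixed birational model of $\mathrm{Proj}\,R(L)$ and, more to the point, the moving part of $|L^{\otimes m}|$ computes the positive part of the divisorial Zariski decomposition of $c_1(L)$. Concretely, writing $c_1(L)=P+N$ for the divisorial Zariski decomposition with $P$ nef-in-codimension-one (the positive part) and $N$ the negative (exceptional) part, finite generation guarantees that for suitable $m$ the base locus of $|L^{\otimes m}|$ in codimension one is exactly $mN$ and the moving part is free after a single blow-up, hence $\frac1m\Phi_m^*\omega_{FS} + \frac1m\,[\text{fixed part}]$ has the same singularity type as $T_K$, the minimally singular current with divisorial Zariski decomposition $P+N$. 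Since $T_m$ is dominated by its potential and $P$ has a smooth (or minimally singular) representative after resolving, one concludes $T_m$ is minimally singular. I would phrase this using that a psef class $\alpha$ on a projective manifold has a minimally singular current whose non-pluripolar mass equals $\mathrm{vol}(\alpha)$, and that finite generation forces $\mathrm{vol}(c_1(L)) = \lim_m \frac{k!}{m^k}h^0(X,L^{\otimes m})$ to be detected by a single $\Phi_m$.

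**For the converse**, suppose $T_m$ is minimally singular for some $m>0$. Then the non-pluripolar Monge–Ampère mass of $T_m$ equals $\mathrm{vol}(c_1(L))$ (minimally singular currents have full Monge–Ampère, by the discussion preceding Section~\ref{ener}). On the other hand $T_m = \Phi_m^*\omega_{FS}$ is a current with analytic singularities coming from a single linear system, so the graph of $\Phi_m$ resolves all of its singularities; its push-forward Monge–Ampère mass is computed on the resolution $Y_m$ as a top self-intersection of the moving part $M_m$ of $|L^{\otimes m}|$, i.e. $\int_X\langle T_m^k\rangle = \frac{1}{m^k}(M_m)^k$ where $M_m = \Phi_m^*\mathcal{O}(1)$. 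Equality of this with $\mathrm{vol}(c_1(L))$ means the moving part of $|L^{\otimes m}|$ already saturates the volume, and then a standard argument (e.g. via the invariance of plurigenera / the asymptotic Riemann–Roch comparison $h^0(X,L^{\otimes pm})\ge h^0(Y_m,\mathcal{O}(p))$ being an equality in top order) upgrades this to: the restriction of sections of $L^{\otimes pm}$ from $X$ to $Y_m$ is surjective in the limit, which by a theorem of Zariski–Wilson type implies $R(L^{\otimes m})$, and hence $R(L)$, is finitely generated.

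**The main obstacle** I anticipate is the converse direction: passing from the numerical statement "$\Phi_m^*\mathcal O(1)$ has top self-intersection equal to $\mathrm{vol}(c_1(L))$" to the algebraic statement "$R(L)$ is finitely generated." Having the full volume achieved by one linear system is morally the statement that $L$ is "abundant with respect to $\Phi_m$," but turning that into finite generation requires either invoking that the section ring of the ample moving part on $Y_m$ is finitely generated (true, since $\mathcal O(1)$ is very ample on projective space) together with the fact that $\Phi_m$ extracts all the sections asymptotically — this last point is exactly where one needs the volume equality to rule out the moving parts $M_{pm}$ growing strictly faster than $p\cdot M_m$. I would handle it by the inequality $\frac{1}{(pm)^k}(M_{pm})^k \ge \frac{1}{m^k}(M_m)^k$ (moving parts only improve), with both sides $\le \mathrm{vol}(c_1(L))$, forcing eventual equality and stabilization of the Iitaka fibration, from which finite generation follows by the standard reduction to the ample case.
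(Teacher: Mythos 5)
Your starting point agrees with the paper: $T_m=\frac1m\Phi_m^*\omega_{FS}$, with Bergman-type potential $\rho_m=\frac{1}{2m}\log\sum_j\|\sigma_j^m\|^2_{h_m}$ (this is \cite[Lemma 3.1]{SZ} in the paper's proof). But both of your directions have genuine gaps. In the forward direction, matching the divisorial data (base locus in codimension one, divisorial Zariski decomposition $P+N$) does not establish minimal singularities: minimal singularity is an $O(1)$ statement about full potentials, namely $\varphi_{min}\le \rho_m+O(1)$, whereas agreement of the negative parts only controls Lelong numbers along divisors; two potentials can have identical divisorial singularities and still differ by an unbounded amount, so "same singularity type as the minimally singular current" is exactly what remains to be proved. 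The paper supplies this missing comparison analytically: it first invokes \cite[Lemma 6.6]{BEGZ}, which says $R(L)$ is finitely generated iff $\rho_{km_0}=\rho_{m_0}+O(1)$ for some $m_0$, and then runs an Ohsawa--Takegoshi--Manivel extension argument (twisting $mL$ by an auxiliary positive bundle $A$ and a K\"ahler current $T_+\in c_1(L)$) to produce sections with uniform $L^2$ bounds, giving $\varphi_{min}+\frac{1}{m-m_0}\varphi\le\frac{m}{m-m_0}\rho_m+C$ with $C$ independent of $m$; taking $m=km_0$ and letting $k\to\infty$ together with the stabilization of $\rho_{km_0}$ yields $\varphi_{min}\le\rho_{m_0}+O(1)$. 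Nothing in your sketch replaces this step, and without it the Zariski-decomposition heuristic does not close.

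In the converse direction you discard the hypothesis: "$T_m$ minimally singular" is replaced by its strictly weaker consequence "full non-pluripolar mass", i.e. $\frac{1}{m^k}(M_m)^k=\mathrm{vol}(L)$, and you then assert that this numerical saturation forces finite generation by a "Zariski--Wilson type" principle and stabilization of the Iitaka fibration. Full Monge--Amp\`ere mass does not imply minimal singularities, and volume equality of the moving parts at every level does not by itself give stabilization of the base ideals ($\mathfrak{b}(|pmL|)=\mathfrak{b}(|mL|)^p$), which is what finite generation amounts to; as written this is an unproved (and to my knowledge non-standard) implication, not a reduction to known results. The discarded potential-level information makes the converse easy and is how the paper's framework handles it: minimal singularity means $\rho_m=\varphi_{min}+O(1)$; since $\rho_{km}$ (suitably normalized) is a competitor in the envelope defining $\varphi_{min}$ one has $\rho_{km}\le\varphi_{min}+O(1)$, while multiplying sections of $mL$ gives $\rho_{km}\ge\rho_m-O(1)$, hence $\rho_{km}=\rho_m+O(1)$ and \cite[Lemma 6.6]{BEGZ} concludes. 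So the workable route is the paper's: argue at the level of the Bergman potentials against the BEGZ finite-generation criterion, not at the level of volumes.
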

\begin{proof}
We use the additive notation for tensor powers of $L$, we fix an orthonormal basis $\{\sigma^{m}_j\}$ for $H^0(X,mL)$.  By  \cite[Lemma 3.1]{SZ}  we have $T_m=\frac{1}{m}\Phi_m^*\omega_{FS}$. Thus,
$$T_m=\Theta_h+\frac{1}{2m}dd^c \rho_m$$
where $\rho_m:=\frac{1}{2m} \log\sum_j ||\sigma_j^m||^2_{h_m}.$ Let $h_{min}$ be a minimally singular metric on $L$ with curvature current $T_{min}=\Theta_h+ dd^c\varphi_{min}$ which is a minimally singular current in $c_1(L).$ %By scaling the functions $\rho_m$ and $\varphi_{min}$ we may assume that
%$$\rho_m\leq \rho_{m+1}\leq \varphi_{min}.$$
 %We will also need the following lemma
By \cite[Lemma 6.6]{BEGZ} the algebra $R(L)$ is finitely generated if and only if there exists $m_0>0$ such that $\rho_{km_0}=\rho_{m_0}+O(1)$ for $k \in \Bbb{N}.$
In the the rest of the proof we construct global sections of large powers of $L$ with uniform $L^2$-estimates by using an Oshawa-Takegoshi type extension theorem. We will adapt some arguments from \cite{Bou}. We fix a hermitian line bundle $(A,h_A)$ with $h_A$ has sufficiently positive curvature form $\omega_A.$ We also fix a K\"ahler current $T_+\in c_1(L)$ with global potential $\varphi.$ Then the line bundle $G_m:=mL-A $ is pseudo-effective for large $m>0$. Indeed, choosing $m_0>0$ large enough such that $m_0T_+-\omega_A$ is a K\"ahler current and writing $$G_m=(m-m_0)L+(m_0L-A)$$
we see that $T_m:=(m-m_0)T_{min}+m_0T_+-\omega_A$ is a positive current representing $c_1(G_m)$ for $m\geq m_0.$ Then we can choose a smooth Hermitian metric $h_m$ on $G_m$ such that $T_m$ is the curvature current of $\exp(-2(m-m_0)\varphi_{min}-2m_0\varphi)h_m.$\\ \indent Now, applying Oshawa-Takegoshi-Manivel $L^2$-extension theorem \cite{Man} we see that the evaluation map
$$H^0(X,G_m+A)\to \mathcal{O}_x\otimes \mathcal{J}(T_m)_x$$ 
is surjective for every $x\in X$ with an $L^2$ estimate independent of $(G_m,h_m)$ and $x.$ This means that for every $x$ and $m$ there exists $\sigma_{m,x}\in H^0(X,mL)$ such that 
$$\int_X\|\sigma_{m,x}\|^2_{h_{min}^{m-m_0}\otimes h_{T_+}\otimes h_m}dV\leq C\ \ \ \text{and} \ \ \ \|\sigma_{m,x}(x)\|_{h_{min}^{m-m_0}\otimes h_{T_+}\otimes h_m}=1$$
where $C$ is independent of $m$ and $x.$ This implies that 
\begin{eqnarray*}
\varphi_{min}(x)+\frac{1}{m-m_0}\varphi(x) & = & \frac{1}{2(m-m_0)}\log ||\sigma_{m,x}(x)||^2_{h_m} \\
& \leq & \frac{1}{2(m-m_0)}\log \sum_j ||\sigma_j^m(x)||_{h_m}^2+C \\
& = & \frac{m}{m-m_0} \rho_m(x)+C
\end{eqnarray*}
where the second line follows from the uniform bound on the $L^2$-norms. Thus, the assertion follows from $\rho_{km_0}=\rho_{m_0}+O(1)$ by taking $m=km_0$ and letting $k\to \infty.$
\end{proof}

  \begin{thm}\label{finite}
 Let $f:X\to X$ be a 1-regular dominant rational map and $\lambda:=\lambda_1(f)>1$ be simple eigenvalue of $f^*$ with $f^*\alpha=\lambda\alpha$ where $\alpha=c_1(L)$ for some big line bundle $L\to X$. Assume that $L\cdot C \geq 0$ for every irreducible algebraic curve $C\subset \pi_2\circ\pi_1^{-1}(I_f).$ If the algebra $$R(L)=\bigoplus_{m\geq0}H^0(X,L^{\otimes m})$$ is finitely generated then there exists $m>0$ and a pluripolar set $\mathcal{E}_m\subset \Bbb{P}H^0(X,L^{\otimes m})$ such that for every $s\not\in \mathcal{E}_m$ 
 
  $$\lambda^{-n}(f^n)^*(\frac{1}{m}[Z_s]) \to T_{\alpha}$$
  in the sense of currents as $n\to \infty.$ 
 \end{thm}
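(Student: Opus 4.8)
The plan is to reduce the statement to an application of Theorem~\ref{A} (equivalently Theorem~\ref{B}), plus a Bertini-type genericity argument showing that the current of integration $\frac{1}{m}[Z_s]$ has identically zero Lelong numbers for $s$ outside a pluripolar set. First I would fix $m_0>0$ as furnished by Proposition~\ref{minimal}, so that $T_{m_0}=\frac{1}{m_0}\Phi_{m_0}^*\omega_{FS}$ is a minimally singular current in $\alpha=c_1(L)$; then $\vmin$ (for $\theta=\theta_{h}$ a smooth representative) differs from $\rho_{m_0}$ by a bounded function. Since $R(L)$ is finitely generated, $Y_{m_0}$ can be taken to be the image under which all higher $\Phi_{km_0}$ factor, and the key point is that the singularities of $\vmin$ are controlled by the pull-back under $\Phi_{m_0}$ of the Fubini--Study potential, which has \emph{analytic} (indeed, only logarithmic along the base locus) singularities. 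I would then verify the dynamical hypothesis $(\star)$, i.e.\ $\frac{1}{\lambda^n}\vmin\circ f^n\to 0$ in $L^1(X)$: the hypothesis $L\cdot C\ge 0$ for every curve $C\subset\pi_2\circ\pi_1^{-1}(I_f)$ is exactly what is needed to control how the potential $\vmin$ interacts with the indeterminacy set under iteration, ensuring $E_+(T_{\min})$ does not get expanded too fast; combined with Corollary~\ref{UI} (uniform integrability of $\{\lambda^{-n}\vmin\circ f^n\}$) this gives $(\star)$. With $(\star)$ in hand, Theorem~\ref{B} produces the Green current $T_\alpha$ as the limit of $\lambda^{-n}(f^n)^*\eta$ for every smooth $\eta\in\alpha$.

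Next I would pass from smooth forms to the currents $\frac{1}{m}[Z_s]$. The idea is that $\frac{1}{m}[Z_s]=\theta_{h_m}\cdot\frac1m + dd^c\bigl(\frac{1}{m}\log\|s\|_{h_m}\bigr)$, so $\frac{1}{m}[Z_s]\in\alpha$ and its global potential is $u_s:=\frac{1}{m}\log\|s\|_{h_m}$ (up to a fixed smooth function). By linearity and continuity of $\lambda^{-n}(f^n)^*$ on positive closed $(1,1)$ currents (weak continuity, as recalled in Section~3), it suffices to show $\lambda^{-n}(f^n)^*\bigl(\frac{1}{m}[Z_s]-T_\alpha\bigr)\to 0$, which via potentials amounts to $\frac{1}{\lambda^n}(u_s-\varphi_\alpha)\circ f^n\to 0$ in $L^1$, where $\varphi_\alpha$ is the potential of $T_\alpha$. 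Since $T_\alpha$ is minimally singular among invariant currents, $\varphi_\alpha\le\vmin+O(1)$, and $(\star)$ already handles $\frac{1}{\lambda^n}\varphi_\alpha\circ f^n$. So the whole problem reduces to: for $s$ outside a pluripolar set, $\frac{1}{\lambda^n}u_s\circ f^n\to0$ in $L^1(X)$. Because $u_s$ is qpsh, Corollary~\ref{UI} gives uniform integrability of the sequence, so it is enough to prove convergence in measure, and for that I would use the volume estimate of Theorem~\ref{vol} together with a Skoda-type integrability bound on $\{u_s<-t\}$ — but now the bound must be \emph{uniform over generic $s$}. This is where the genericity enters: for $s$ in the complement of a pluripolar set in $\mathbb{P}H^0(X,L^{\otimes m})$, the zero divisor $Z_s$ meets the relevant algebraic sets (the components of $\overline{\mathcal{I}_1}$, the exceptional divisors, and $Y_m$'s singular structure) properly, so the Lelong numbers $\nu([Z_s],x)=\frac{1}{m}\,\mathrm{ord}_x(s)$ are controlled; in particular $\nu(\frac1m[Z_s],x)=0$ off a fixed analytic set, uniformly, giving the needed uniform Skoda constants. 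The standard mechanism here is that $\{s: \mathrm{mult}_x(s)\ge 1 \text{ for some } x\in Z\}$ is, for each fixed subvariety $Z$, either all of $H^0$ or a proper (hence pluripolar, being algebraic of positive codimension) subvariety, and one takes $\mathcal{E}_m$ to be the union over the finitely many bad $Z$'s.

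Concretely, the steps in order are: (1) fix $m=m_0$ from Proposition~\ref{minimal}, identify $\vmin$ with $\rho_{m_0}$ up to $O(1)$; (2) using $L\cdot C\ge0$ on curves in $\pi_2\circ\pi_1^{-1}(I_f)$ plus Corollary~\ref{UI}, verify $(\star)$, hence invoke Theorem~\ref{B} to get $T_\alpha$ and $\lambda^{-n}(f^n)^*\eta\to T_\alpha$ for smooth $\eta\in\alpha$; (3) define the exceptional set $\mathcal{E}_m\subset\mathbb{P}H^0(X,L^{\otimes m})$ as the (pluripolar) locus of sections $s$ whose divisor $Z_s$ fails to be generic with respect to the finitely many relevant subvarieties, so that for $s\notin\mathcal{E}_m$ one has uniform control $\mathrm{Vol}(\{u_s<-t\})\le A e^{-Bt}$ with $A,B$ independent of the (finitely many structural) data; (4) reduce $\lambda^{-n}(f^n)^*\frac1m[Z_s]\to T_\alpha$ to $\frac{1}{\lambda^n}(u_s-\varphi_\alpha)\circ f^n\to0$ in $L^1$ via weak continuity of pull-back and the $dd^c$-lemma; (5) prove this convergence by combining uniform integrability (Corollary~\ref{UI}) with convergence in measure, the latter following from Theorem~\ref{vol} applied to $\Omega_{n,t}=\{\frac{1}{\lambda^n}u_s\circ f^n<-t\}$ exactly as in the proof of Proposition~\ref{normal}, using $f^n(\Omega_{n,t})\subset\{u_s<-t\lambda^n\}$ and the uniform Skoda bound. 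The main obstacle I expect is step~(3)--(5): making the integrability estimate on $\{u_s<-t\}$ \emph{uniform over the generic $s$} — in particular, identifying precisely which finitely many subvarieties $Z_s$ must avoid so that the argument of Proposition~\ref{normal} goes through, and checking that this bad locus is genuinely pluripolar rather than merely meager. The hypothesis $L\cdot C\ge 0$ on curves inside $\pi_2\circ\pi_1^{-1}(I_f)$ is the delicate structural input that keeps $(\star)$ and these estimates alive, and tracking it carefully through the iteration is the heart of the proof.
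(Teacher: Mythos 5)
Your first two steps match the paper in spirit: you invoke Proposition~\ref{minimal} to get a minimally singular current of algebraic origin, and you rely on $(\star)$ (via \cite[Theorem 1.3]{B}) to produce $T_{\alpha}$ as the limit of $\lambda^{-n}(f^n)^*\eta$ for smooth $\eta$. But steps (3)--(5) --- which you yourself flagged as ``the heart of the proof'' --- take a route that does not survive scrutiny, and the paper's actual argument is of a fundamentally different nature.

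The gap is this. You propose to get a uniform Skoda-type estimate $\mathrm{Vol}(\{u_s<-t\})\leq Ae^{-Bt}$ with a controllable exponent $B$ by arranging, via Bertini, that $Z_s$ avoids finitely many bad subvarieties so that ``$\nu(\frac{1}{m}[Z_s],x)=0$ off a fixed analytic set.'' This cannot work: for \emph{every} section $s$, the current $\frac{1}{m}[Z_s]$ has Lelong number $\geq\frac{1}{m}$ at \emph{every} point of the hypersurface $Z_s$, and $Z_s$ is a positive-volume-filling family as $s$ varies. There is no pluripolar set of sections whose removal makes the Lelong numbers of $u_s$ identically zero. Consequently, in the mechanism of Proposition~\ref{normal}/Theorem~\ref{A}, the Skoda exponent for $u_s$ cannot be taken arbitrarily large --- it is capped by (roughly) $m$ times a universal constant --- and the key step there (``Since $A>0$ can be chosen arbitrarily large the assertion follows'') breaks. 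Plugging the bounded exponent into the inequality $(C_1\,\mathrm{Vol}(\Omega_{n,t}))^{C_2\lambda^n}\leq B\exp(-At\lambda^n)$ only yields a bound on $\mathrm{Vol}(\Omega_{n,t})$ that is uniform in $n$, not one that tends to $0$. So your step (5) does not close.

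What the paper does instead is a Russakovskii--Shiffman style averaging argument that never appeals to volume estimates against $u_s$. It writes $[Z_s]=mT_m+dd^c\varphi_s$ against the \emph{averaged} current $T_m$ (not against $T_\alpha$), observes that $\varphi_s$ is bounded above because $T_m$ is minimally singular (and stresses that $\varphi_s$ is a difference of qpsh potentials, so not itself qpsh), and exploits the identity $\int_{\mathbb{P}H^0(X,L^{\otimes m})}\varphi_s(x)\,dV_m(s)\equiv c$, a constant in $x$, coming from $dd^c_xT_m=dd^c_x\int[Z_s]\,dV_m(s)$. Fubini then gives the $L^1(\mathbb{P}H^0)$ bound on $\psi_k(s)=\sum_{n\le k}\|\lambda^{-n}\varphi_s\circ f^n\|_{L^1(X)}$, hence a.e.\ convergence; the upgrade from Lebesgue-a.e.\ to quasi-everywhere uses that $s\mapsto\varphi_s(x)$ is psh on $\mathbb{P}H^0$ together with the Chern--Levine--Nirenberg inequality applied to the Monge--Amp\`ere measure of a bounded psh potential supported on a putative non-pluripolar compact inside $\mathcal{E}_m$. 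The uniform integrability (Corollary~\ref{UI}) and volume estimates (Theorem~\ref{vol}) that you lean on for $u_s$ are used in the paper only for $V_{\theta}$ (i.e.\ to establish $(\star)$), not for the section potentials. The exceptional set $\mathcal{E}_m$ in the paper is defined dynamically and then shown to be pluripolar; it is not an algebraic union that $Z_s$ must avoid.
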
  
If the conclusion of Theorem \ref{finite} holds for some positive integer $m$, then it also holds for any multiple of $m.$ %The positivity assumption in Theorem \ref{finite} ensures the existence of the Green current $T_{\alpha}$ (\cite{B}). 
In the special case where $X=\Bbb{P}^k$ and $L=\mathcal{O}(1)$ is the hyperplane bundle, we recover the corresponding result of \cite{RS}.
 \begin{proof}
We fix $m>0$ as in Proposition \ref{minimal} large enough such that $h^0(X,L^{\otimes m})\geq Cm^k$ for some $C>0$ and identify $H^0(X,L^{\otimes m})$ with $\Bbb{C}^{N_m}.$ %and we consider the canonical map  $\Phi_m:X\dashrightarrow \Bbb{P}^{N_m}$. 
%For $s_1,s_2 \in H^0(X,L^{\otimes m}),$ $Z_{s_1}=Z_{s_2}$ if and only if $s_1=\tau s_2$ for some $\tau\in \Bbb{C}^*.$ Thus, we can identify $|L^{\otimes m}|$ with $\Bbb{P}H^0(X,L^{\otimes m}).$ Therefore, it is enough to 
First we prove that $$\lambda_f^{-n}(f^n)^*(\frac{1}{m}[Z_s]) \to T_{\alpha}$$ for a.e. $s\in \Bbb{P}H^0(X,L^{\otimes m})$ with respect to the Fubini-Study volume form $V_m$. \\ \indent
Let $\theta$ be a smooth form in $c_1(L)$ then by \cite[Theorem 1.3]{B} 
$$\lambda^{-n}V_{\theta}\circ f^n \to 0$$in $L^1(X)$ as $n\to \infty.$ Since 
$$
T_m %& = & \int_{SH^0(X,L^{\otimes m})}\frac{1}{m}[Z_s] d\mu_m(s)\\
=  \int_{\Bbb{P}H^0(X,L^{\otimes m})}\frac{1}{m}[Z_s] dV_m(s)
$$
is a minimally singular current representing the class $\alpha$ we infer that  $$\lambda^{-n}(f^n)^*T_m\to T_{\alpha}$$ in the sense of currents.\\ \indent Now, for every $s\in \Bbb{P}H^0(X,L^{\otimes m})$ by $dd^c$-lemma \cite[pp 149]{GH} we have $$[Z_s]=mT_m+dd_x^c\varphi_s$$ where $\varphi_s \in L^1(X)$ (not necessarily qpsh!) unique up to a constant. Moreover, since $T_m$ is minimally singular $\varphi_s$ is bounded from above and we may assume that $\varphi_s\leq 0.$  To prove the assertion it is enough to show that $\lambda^{-n}\varphi_s\circ f^n \to 0$ in $L^1(X)$ for  every $s$ outside of a pluripolar set. By definition of $T_m$ we have  $$dd_x^c\int_{\Bbb{P}H^0(X,L^{\otimes m})}\varphi_s(x) dV_m(s) =0$$ and since $X$ is compact we deduce that  $$\int_{\Bbb{P}H^0(X,L^{\otimes m})}\varphi_s(x) dV_m(s) =c$$ for every $x\in X$ where $c\leq0$ is a constant. We define $$\psi_k:\Bbb{P}H^0(X,L^{\otimes m}) \to \Bbb{R}$$
 $$\psi_k(s):=\sum^k_{n=0}||\lambda^{-n}\varphi_s\circ f^n||_{L^1(X)}$$
 and claim that $\psi_k\to \psi \in L^1(\Bbb{P}H^0(X,L^{\otimes m})).$ Since $\psi_k$ is non-negative and increasing it is enough to show that $\psi_k$ is bounded in $ L^1(\Bbb{P}H^0(X,L^{\otimes m}))$ and this follows from Fubini's theorem:
 \begin{eqnarray*}
 ||\psi_k|| & = & \sum_{n=0}^k \int_X\int_{\Bbb{P}H^0(X,L^{\otimes m})}\lambda^{-n} |\varphi_s \circ f^n(x)|dV_m(s) dV(x) \\ 
 & = &|c| \sum_{n=0}^k \lambda^{-n} \leq \frac{|c|\lambda}{\lambda-1}.
 \end{eqnarray*} 
Since $\psi \in  L^1(\Bbb{P}H^0(X,L^{\otimes m})),$ $\psi(s)$ is finite for almost every $s\in \Bbb{P}H^0(X,L^{\otimes m}).$ Hence, $\lambda^{-n}\varphi_s\circ f^n \to 0$ in $L^1(X)$ for almost every $s.$  \\ \indent
  It remains to show that the convergence holds outside of a pluripolar set. Recall that by \cite[Theorem 7.2]{GZ} every locally pluripolar set $K$ is globally pluripolar that is $K\subset \{\phi=-\infty\}$ for some (globally defined) qpsh function $\phi$ on $\Bbb{P}H^0(X,L^{\otimes m}).$ Let $\mathcal{E}_m\subset \Bbb{P}H^0(X,L^{\otimes m})$ denotes the exceptional set on which $\lambda^{-n}(f^n)^*[Z_s]\not \to mT_{\alpha}.$ If $\mathcal{E}_m$ is not a pluripolar set then %it has a non-puripolar compact subset $K\subset \mathcal{E}_m$. We may asume that 
  there exists a non-pluripolar compact set $K\subset \mathcal{E}_m$ such that $K\subset \Omega$ where $\Omega\subset \Bbb{P}H^0(X,L^{\otimes m})$ is an open set contained in a coordinate chart. Then there exists a bounded psh function $u_K$ on $\Omega$  such that its Monge-Amp\`ere measure $\nu:=dd^cu_K\wedge\dots\wedge dd^cu_K$ defines a probability measure supported on $K.$ Since for every $x\in X\backslash B_{|L^{\otimes m}|}$ $$s\to \varphi_s(x)$$ is psh on $\Omega$ (cf. \cite[\S 3]{SZ}) by Chern-Levine-Nirenberg inequality we obtain
  $$\int_{K}|\varphi_s(x)|d\nu(s)\leq C\|\varphi_s(x)\|_{L^1(K)}\|u_K\|_{L^{\infty}(K)}<\infty.$$
 Thus, applying the above argument with $\nu$ in the place of $V_m$ one can conclude that $\lambda^{-n}\varphi_s \circ f^n\to 0$ for $\nu$-almost every $s\in K$. This is a contradiction. Hence, $\mathcal{E}_m$ is pluripolar.  
\end{proof}
The main idea of the proof of Theorem \ref{finite} goes back to Russakovskii-Shiffman \cite{RS} (see also \cite{Si}). Recall that for a big line bundle $L$ the algebra $R(L)$ is not always finitely generated. In fact, if $L$ is big and nef then $R(L)$ is finitely generated if and only if $L$ is semi-ample (see \cite{L} for details). In Theorem \ref{finite},  if $L$ is semi-ample then $c_1(L)$ can be represented by a smooth semi-positive form and by Theorem \ref{B} we do not need to assume that $\lambda=\lambda_1(f)$ nor $\lambda$ is simple. In particular, we have the following result: 
 
\begin{thm}\label{zeros}
Let $f:X \to X$ be a 1-regular dominant rational map and $\lambda>1$ with $f^*\alpha=\lambda\alpha$ where $\alpha=c_1(L)$ and $L\to X$ is a semi-ample big line bundle. Then %for sufficiently divisible $m\gg0$ 
there exists $m>0$ and a pluripolar set $\mathcal{E}_m\subset \Bbb{P}H^0(X,L^{\otimes m})$ such that for every $s \not\in \mathcal{E}_m$ 
 $$\\\lambda^{-n}(f^n)^*(\frac{1}{m}[Z_s]) \to T_{\alpha}$$
in the sense of currents as $n\to\infty$.
\end{thm}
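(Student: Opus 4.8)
The plan is to reduce this statement to Theorem \ref{finite} by recalling that a semi-ample big line bundle has finitely generated section algebra, and to remove the hypotheses on $\lambda$ being the leading dynamical degree and simple by invoking the stronger existence Theorem \ref{B}. Concretely: since $L$ is semi-ample, there is $m_0>0$ with $B_{|L^{\otimes m_0}|}=\emptyset$, and the canonical map $\Phi_{m_0}:X\to Y_{m_0}\subset \mathbb{P}^{N_{m_0}}$ is a genuine morphism. A standard fact (see \cite{L}) is that the ring $R(L)=\bigoplus_{m\ge 0}H^0(X,L^{\otimes m})$ is then finitely generated, and moreover $c_1(L)=\Phi_{m_0}^*\{\tfrac{1}{m_0}\omega_{FS}\}$ contains the smooth semi-positive form $\tfrac{1}{m_0}\Phi_{m_0}^*\omega_{FS}$. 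Hence $V_\theta\equiv 0$ for a smooth semi-positive representative $\theta\in\alpha$, so the dynamical condition $(\star)$ of Theorem \ref{B} is automatically satisfied; this is exactly the remark made just after the statement of Theorem \ref{B}. Therefore the Green current $T_\alpha$ exists and $\lambda^{-n}(f^n)^*\eta\to T_\alpha$ for every smooth $\eta\in\alpha$, without assuming $\lambda=\lambda_1(f)$ or that $\lambda$ is simple.

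The next step is to run the argument of Theorem \ref{finite} essentially verbatim, checking that nothing used there relied on $\lambda$ being a simple leading eigenvalue beyond what Theorem \ref{B} already provides in this semi-ample setting. Fix $m=m_0$ (or a multiple); by Proposition \ref{minimal} the finite generation of $R(L)$ gives that $T_m:=\int_{SH^0(X,L^{\otimes m})}\tfrac{1}{m}[Z_s]\,d\mu_m(s)=\tfrac{1}{m}\Phi_m^*\omega_{FS}$ is minimally singular in $\alpha$ — but in the semi-ample case it is in fact smooth, which only makes things easier. Writing $[Z_s]=mT_m+dd^c\varphi_s$ with $\varphi_s\in L^1(X)$ normalized by $\varphi_s\le 0$, one shows $\int \varphi_s(x)\,dV_m(s)=c$ is a constant (using compactness of $X$ and $dd^c$ of the average vanishing), and then the functions $\psi_k(s)=\sum_{n=0}^k\|\lambda^{-n}\varphi_s\circ f^n\|_{L^1(X)}$ are uniformly bounded in $L^1(\mathbb{P}H^0(X,L^{\otimes m}))$ by Fubini, since $\lambda^{-n}(f^n)^*T_m\to T_\alpha$ (here we use $\lambda^{-n}V_\theta\circ f^n\to 0$, which now holds trivially because $V_\theta\equiv 0$). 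This forces $\lambda^{-n}\varphi_s\circ f^n\to 0$ in $L^1(X)$ for a.e. $s$, hence $\lambda^{-n}(f^n)^*(\tfrac1m[Z_s])\to T_\alpha$ for a.e. $s$.

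To upgrade "almost every" to "outside a pluripolar set" one repeats the final paragraph of the proof of Theorem \ref{finite}: if the exceptional set $\mathcal{E}_m$ were non-pluripolar it would contain a non-pluripolar compact $K$ inside a coordinate chart, carrying a Monge-Amp\`ere probability measure $\nu=(dd^c u_K)^{N_m}$ of a bounded psh function; since $s\mapsto\varphi_s(x)$ is psh on that chart for each $x\notin B_{|L^{\otimes m}|}=\emptyset$, the Chern-Levine-Nirenberg inequality gives $\int_K|\varphi_s(x)|\,d\nu(s)<\infty$, and rerunning the Fubini argument with $\nu$ in place of $V_m$ yields convergence $\nu$-a.e. on $K$, contradicting $K\subset\mathcal{E}_m$. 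I expect the only real point requiring care — the "main obstacle" such as it is — to be the verification that the quoted results \cite{B} Theorem 1.3 and Theorem \ref{B} genuinely do not need the simplicity/leading-degree hypotheses once $(\star)$ is known to hold with $V_\theta\equiv 0$; everything else is a transcription of the proof of Theorem \ref{finite} with the simplification that $T_m$ is now smooth. One should also note explicitly, as in the remark following the theorem, that the base locus being empty is what makes $s\mapsto\varphi_s(x)$ globally defined and psh for every $x\in X$, which is cleaner here than in Theorem \ref{finite}.
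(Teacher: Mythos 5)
Your proposal is correct and coincides with the paper's intended argument: the semi-ample hypothesis makes $\alpha$ smoothly semi-positive so that $V_\theta\equiv 0$ and $(\star)$ holds trivially (hence Theorem \ref{B} applies without the leading/simple eigenvalue hypotheses), while finite generation of $R(L)$ holds automatically, and the remainder of the proof of Theorem \ref{finite} carries over verbatim. Your additional observations that $T_m$ is actually smooth and that the base locus is empty are accurate simplifications of that argument.
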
  

  %If $X=\Bbb{P}^k$ and $L$ is the Hyperplane bundle $\mathcal{O}(1)$ then the corresponding result follows from \cite{RS} and \cite{FS} (see also \cite{DS06} for similar results).\\ \indent %We also show that the equidistribution result in the sense of \cite{RS} holds in a more general context: 
%Next, we consider the case where the algebra $$ R(L):=\bigoplus_{m\geq0}H^0(X,L^{\otimes m})$$ is finitely generated. 
\subsection{Equidistribution of Currents with Mild Singularities}
  Theorem \ref{A} indicates that if the class $\alpha$ can be represented by a positive closed current with mild singularities then any such current equidistributes towards the Green current.
 
%\begin{thm} \label{lelong}
%Let $f:X \to X$ be a dominant 1-regular meromorphic map. Assume that $\lambda:=\lambda_1(f)>1$ is a simple eigenvalue of $f^*$ and $f^*\alpha =\lambda \alpha$ with $\alpha\in\psef$. Furthermore, we assume that $\nu(\vmin,x)=0$ for every $x\in X.$ Then condition $(\star)$ of Theorem \ref{B} holds.
%Moreover, for every positive closed $(1,1)$ current $S\in \alpha$ such that the Lelong numbers, $\nu(S,x)=0$ for each $x\in X$ the sequence
%$\lambda^{-n}(f^n)^*S$ converges weakly to the Green current $T_{\alpha}.$ 
%\end{thm}
\begin{proof}[Proof of Theorem \ref{A}]
Let $S\in \alpha$ be a positive closed $(1,1)$ current such that $\nu(S,x)=0$ for every $x\in X.$ Then by $dd^c$ lemma we write $S=\theta + dd^c\varphi$. By Theorem \ref{B} and Corollary \ref{UI}, it is enough to show that $Vol(\{\varphi \circ f^n<-t\lambda^n\}) \to 0$ as $n\to \infty$ for every $t>0.$ Since 
$$f^n(\{\varphi \circ f^n<-t\lambda^n\}) \subset \{\varphi<-t\lambda^n\}$$ the claim follows from volume estimates as in proof of Proposition \ref{normal}.
 %On the other hand, since  the Lelong numbers $\nu( \varphi,x)=0$ for every $x \in X$ it follows from [Theorem 3.1, \cite{Ki}] that for every $A>0$ there exists $B>0$ such that  
%$$(C_1Vol(\{\varphi \circ f^n<-t\lambda^n\}))^{C_2\lambda^n}\leq B \exp(-At\lambda^n)$$
%where $C_1,C_2>0$ as in Theorem \ref{vol}. Since $A>0$ can be chosen arbitrarily large the assertion follows. 
\end{proof}
Theorem \ref{A} extends \cite[Theorem 1.4]{G03} which was obtained in the special  case when $X=\Bbb{P}^k$ and and $\alpha=\{\omega_{FS}\}$. In fact, if $\alpha$ can be represented by a positive closed current with identically zero Lelong numbers then $\alpha\in \nef$ (see \cite{Bou}). Thus, Theorem \ref{A} provides a new proof of a special case of \cite[Theorem A]{DG}.
Recall that if $\alpha \in \nef \cap H^{1,1}_{big}(X,R)$ then $\nu(\Tmin,x) \equiv 0$ on X. However, without the big assumption this is no longer true even in dimension 2. There exists a ruled surface X over an elliptic curve such that X contains an irreducible curve $C$ with the following property: $\{C\} \in \nef$ but $\{C\}$ contains only one positive closed (1,1) current  $[C]$ namely, the current of integration along C (see \cite{DPS} for details). On the other hand, even if the class $\alpha\in \bigc \cap \nef$ the function $\vmin$ might have a non-empty polar set (see \cite[Example 5.8]{BEGZ}).
\subsection{Equidistribution in Energy Classes}
%In this section, we consider some cases where the invariant class is merely big.
\begin{prop}\label{cap}
Let $f$ be as in Theorem \ref{B}. We assume that $\lambda:=\lambda_1(f)>1$ is simple and $\alpha \in \bigc$. If $\varphi \in \psh$ such that 
$$t^k Cap_{\theta}(\varphi-\vmin<-t)\to 0 \ as \ t\to +\infty$$
then $\frac{1}{\lambda^n}(f^n)^*(\theta+dd^c\varphi) \to T_{\alpha}$ in the sense of currents as $n\to \infty$.
\end{prop}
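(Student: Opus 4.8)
The plan is to follow the strategy of Theorem~\ref{A}: reduce the convergence $\frac{1}{\lambda^n}(f^n)^*(\theta+dd^c\varphi)\to T_\alpha$ to an $L^1$--decay statement for the normalized iterates of the global potential, and then extract that decay from the capacity hypothesis via the volume estimate of Theorem~\ref{vol}. Writing $S=\theta+dd^c\varphi$, I would first observe that pulling back local potentials gives $\frac{1}{\lambda^n}(f^n)^*S=\frac{1}{\lambda^n}(f^n)^*\theta+dd^c\bigl(\frac{1}{\lambda^n}\varphi\circ f^n\bigr)$; since $f$ is as in Theorem~\ref{B} (so that $(\star)$ holds) and $\lambda$ is a simple eigenvalue, Theorem~\ref{B} already supplies the Green current and gives $\frac{1}{\lambda^n}(f^n)^*\theta\to T_\alpha$, so by continuity of $dd^c$ it suffices to prove $\frac{1}{\lambda^n}\varphi\circ f^n\to 0$ in $L^1(X)$. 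The simplicity of $\lambda$ forces $\delta_1(f^n)\le C\lambda^n$, so Theorem~\ref{vol} and Corollary~\ref{UI} apply; since $\varphi$ is qpsh, Corollary~\ref{UI} makes $\{\frac{1}{\lambda^n}\varphi\circ f^n\}$ uniformly integrable. Hence $L^1$--convergence is equivalent to convergence in measure, and after normalizing $\varphi\le 0$ the whole problem reduces to showing that for each fixed $t>0$,
\[
Vol\bigl(\{\varphi\circ f^n<-t\lambda^n\}\bigr)\longrightarrow 0\qquad(n\to\infty).
\]

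Next I would split the sublevel set along $\varphi=\vmin+(\varphi-\vmin)$: if $\varphi\circ f^n<-t\lambda^n$ then either $\vmin\circ f^n<-\tfrac{t}{2}\lambda^n$ or $(\varphi-\vmin)\circ f^n<-\tfrac{t}{2}\lambda^n$. The volume of the first set tends to $0$ directly from $(\star)$, since $L^1$--convergence of $\frac{1}{\lambda^n}\vmin\circ f^n$ to $0$ forces convergence in measure. For the second set, put $g:=\varphi-\vmin$ (which is bounded above, as $\vmin$ is least singular) and $\Omega_n:=\{g\circ f^n<-\tfrac{t}{2}\lambda^n\}$. Then $f^n(\Omega_n)\subset\{g<-\tfrac{t}{2}\lambda^n\}$, so Theorem~\ref{vol} gives $\bigl(C_1Vol(\Omega_n)\bigr)^{C_2\lambda^n}\le Vol\bigl(\{g<-\tfrac{t}{2}\lambda^n\}\bigr)$, and the task becomes to show $\frac{1}{\lambda^n}\log Vol\bigl(\{g<-\tfrac{t}{2}\lambda^n\}\bigr)\to-\infty$.

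This is where the capacity hypothesis is used. I would invoke the Alexander--Taylor type comparison between the Monge--Amp\`ere capacity and volume on a compact K\"ahler manifold (see \cite{GZ,BEGZ}): there are constants $A,a>0$ with $Vol(B)\le A\exp\bigl(-a\,Cap_\theta(B)^{-1/k}\bigr)$ for every Borel $B\subset X$. Setting $\epsilon(s):=s^{k}Cap_\theta(\{g<-s\})$, which tends to $0$ by hypothesis, this yields $Vol(\{g<-s\})\le A\exp\bigl(-a\,s\,\epsilon(s)^{-1/k}\bigr)$, and hence
\[
\frac{1}{\lambda^n}\log Vol\bigl(\{g<-\tfrac{t}{2}\lambda^n\}\bigr)\le\frac{\log A}{\lambda^n}-\frac{a\,t}{2\,\epsilon(\tfrac{t}{2}\lambda^n)^{1/k}}\longrightarrow-\infty.
\]
Therefore $Vol(\Omega_n)\le C_1^{-1}Vol\bigl(\{g<-\tfrac{t}{2}\lambda^n\}\bigr)^{1/(C_2\lambda^n)}\to 0$, which together with the estimate for the $\vmin$ term gives $Vol(\{\varphi\circ f^n<-t\lambda^n\})\to 0$, completing the reduction and hence the proof.

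The main obstacle is precisely the capacity step: one has to recognize that the normalization $t^{k}Cap_\theta(\varphi-\vmin<-t)\to 0$ is exactly calibrated so that, through the volume--capacity inequality, $Vol(\{\varphi-\vmin<-t\})$ decays faster than any exponential --- the only rate strong enough to survive the loss coming from the exponent $C_2\lambda^n$ in Theorem~\ref{vol}. Once that inequality is available the remainder is a routine variant of the argument for Theorem~\ref{A}; what is left is bookkeeping (the harmless normalizations, the indeterminacy sets of measure zero, and the identity $(f^n)^*(dd^c\varphi)=dd^c(\varphi\circ f^n)$ obtained by pulling back local potentials).
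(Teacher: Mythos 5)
Your proposal is correct and is, at bottom, the paper's own argument: the same reduction via Theorem \ref{B}, Corollary \ref{UI} and Theorem \ref{vol} to showing that $Vol\bigl(\{(\varphi-\vmin)\circ f^n<-t\lambda^n\}\bigr)$ decays super-exponentially, and the same conversion of the hypothesis $t^kCap_{\theta}(\varphi-\vmin<-t)\to 0$ into that decay through a volume--capacity inequality. Your splitting of $\{\varphi\circ f^n<-t\lambda^n\}$ into a $\vmin$-part handled by $(\star)$ and a $(\varphi-\vmin)$-part is only cosmetically different from the paper, which runs the uniform-integrability and volume argument directly on $\varphi-\vmin$ and folds $(\star)$ in at the end. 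The one real divergence is that you quote $Vol(B)\leq A\exp\bigl(-a\,Cap_{\theta}(B)^{-1/k}\bigr)$ as a known Alexander--Taylor type inequality from \cite{GZ,BEGZ}: for a K\"ahler form this is indeed in \cite{GZ}, but here $Cap_{\theta}$ is the big-class capacity of \cite{BEGZ}, where the inequality is not stated in this form, and the paper in fact proves the estimate it needs. It does so in two short steps: Zeriahi's uniform Skoda theorem applied to the global extremal function $V^*_{K_t,\theta}$ of $K_t=\{\varphi-\vmin<-t\}$ gives $Vol(K_t)\leq C_{\theta}\exp(-\nu_{\alpha}^{-1}M_{\theta}(K_t))$, and Lemma \ref{l} (namely $Cap_{\theta}(K_t)\geq vol(\alpha)/M_{\theta}(K_t)^k$, a variant of \cite[Lemma 4.2]{BEGZ}) turns this into precisely the bound you invoked. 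So your outline is sound and correctly identifies the capacity step as the crux; a complete write-up should either supply a precise reference valid for big classes or include this Skoda-plus-extremal-function derivation, which is the only genuinely new ingredient in the paper's proof.
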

\begin{proof}
By shifting $\varphi$ we may assume that $\varphi\leq \vmin\leq 0.$ Then replacing $\phi$ by $\varphi-\vmin$ in the statement of Corollary \ref{UI} and using $\{\varphi-\vmin<-t\}\subset \{\varphi<-t\}$ one can show that the sequence $\{\frac{1}{\lambda^n}(\varphi-\vmin)\circ f^n\}$ is uniformly integrable. Thus, it is enough to show that for every $t>0$ 

$$Vol(\frac{1}{\lambda^n}(\varphi-\vmin)\circ f^n<-t)\rightarrow 0\ as\ n\rightarrow \infty.$$
Let $\nu_{\alpha}:=2 \sup_{T, x}\nu(T,x)>0$ where the $\sup$ is taken over all positive closed currents $T\in \alpha$ and $x \in X$. Note that $\nu_{\alpha}<\infty$ and depends only on the class $\alpha.$ Then by uniform version of Skoda's integrability theorem \cite{Ze} we have 
\begin{equation}\label{1}
\int _X \exp(-\nu_{\alpha}^{-1}\phi) dV \leq C_{\theta} \ for\ every\ \phi\in Psh(X,\theta) \ such\ that \ \sup_X\phi= 0
\end{equation}
 where $C_{\theta} >0$. 
Now, let us denote $K_t:=\{\varphi-\vmin<-t\}$ and apply (\ref{1}) with $\phi:=V^*_{K_t,\theta}-M_{\theta}(K_t)$ where $M_{\theta}(K_t):=\sup_XV^*_{K_t,\theta}.$ Then we get 
$$ \int_X\exp(-\nu_{\alpha}^{-1}V^*_{K_t,\theta}) dV \leq C_{\theta} \exp(- \nu_{\alpha}^{-1}M_{\theta}(K_t)) $$
Since $V^*_{K_t,\theta}\leq 0$ a.e. on $K_t$ with respect to Lebesgue measure \cite{GZ} we infer that 
$$Vol(\varphi -\vmin<-t) \leq C_{\theta}  \exp(- \nu_{\alpha}^{-1}M_{\theta}(K_t))$$
We need the following lemma (cf. \cite[Lemma 4.2]{BEGZ}):
\begin{lem}\label{l}
$$ Cap_{\theta}(K_t) \geq \frac{vol(\alpha)}{M_{\theta}(K_t)^k} $$
\end{lem}
\begin{proof}
Note that $M_{\theta}(K_t) \rightarrow \infty$ as $t\rightarrow \infty.$ Thus we may assume that $M_{\theta}(K_t)\geq 1.$ Now, we set $\sigma_t:= M_{\theta}(K_t)^{-1} V^*_{K_t, \theta}+(1-M_{\theta}(K_t)^{-1})\vmin-1$ then $\vmin-1\leq \sigma_t\leq \vmin$ and by definition of $Cap_{\theta}$ we get
$$Cap_{\theta}(K_t)\geq \int_X MA(\sigma_t) \geq M_{\theta}(K_t)^{-k}\int_X MA(V^*_{K_t,\theta})\geq \frac{vol(\alpha)}{ M_{\theta}(K_t)^k}$$
where the last inequality follows from the fact that $V^*_{K_t,\theta}$ is minimally singular ( cf \cite[Theorem 1.16  ]{BEGZ}).
\end{proof}
Thus, by Lemma \ref{l} we get 
$$Vol(\varphi-\vmin<-t) \leq C_{\theta} \exp(- \nu_{\alpha}^{-1}(\frac{vol(\alpha)}{Cap_{\theta}(K_t)})^{\frac{1}{k}})  $$
Now, for $t>0$ we let 
$$\Omega_{n,t}:=\{(\varphi-\vmin) \circ f^n<-t \lambda^n\}$$
then by Theorem \ref{vol} we have 
$$Vol(f^n(\Omega_{n,t}) \geq (C_1 Vol(\Omega_{n,t}))^{C_2\lambda^n}$$
hence,
$$0\leq Vol(\Omega_{n,t})\leq A\exp(-\frac{B}{t\lambda^n Cap_{\theta}(K_{t\lambda^n})^{\frac{1}{k}}})$$
for some $A,B>0.$ Thus, the assertion follows. 
\end{proof}

We stress that if $\varphi \in \psh$ such that $$t^k Cap_{\theta}(\varphi-\vmin<-t)\to 0 \ as \ t\to \infty$$ then $\int_X MA(\varphi)=vol(\alpha)$ that is $\varphi$ has full Monge-Amp\`ere. Indeed, letting $\varphi_t:=\max(\varphi,\vmin-t)$. Then $t^{-1}\varphi_t + (1-t^{-1})\vmin$ is a competitor for the Capacity for $t\geq 1$. Thus,
$$MA(\varphi_t)\leq t^k Cap_{\theta}$$
and we infer that $\int_{(\varphi-\vmin\leq-t)}MA(\varphi_t) \to 0$ as $t \to \infty$ and this implies that $\int_X MA(\varphi)=vol(\alpha).$\\ \indent
 In general, capacity of sublevel sets do not decay faster than $t^{-1}$ \cite{GZ1}. If $\Omega \subset \Bbb{C}^k$ is a bounded hyperconvex domain and $\varphi \in \mathcal{F}(\Omega)$ where $\mathcal{F}(\Omega)$ denotes the Cegrell class then $t^k Cap(\varphi<-t) \to 0$ as $t \to \infty$ (\cite{CKZ}). In the setting of compact K\"ahler manifolds, if $\alpha$ is a K\"ahler class or $\alpha \in NS(X)\cap\bigc$ semi-positive then for every $\varphi \in \psh$ with full Monge-Amp\`ere the Lelong numbers of $\varphi$ are identically zero on $X$ \cite{GZ1,BB}. Thus, by Theorem \ref{A} we have $\frac{1}{\lambda^n}(f^n)^*(\theta+dd^c\varphi) \to T_{\alpha}.$
\begin{question}
Let $f$ and $\alpha$ be as in Proposition \ref{cap}. Is is true that $$\frac{1}{\lambda^n}(f^n)^*(\theta+dd^c\varphi) \to T_{\alpha}$$ for every $\varphi\in \psh$ which has full Monge-Amp\`ere i.e. $\int_X MA(\varphi) =vol(\alpha)?$
\end{question}
In this direction, we prove the following result :
\begin{prop}
Let $f$ be as in Proposition \ref{cap}. If $\varphi \in \mathcal{E}_{\chi}(X,\theta)$ where $\chi$ is a convex weight such that $|\chi(-t)|=O(t^p)$ as $t\to \infty$ for some $p>k-1$ then $$\frac{1}{\lambda^n}(f^n)^*(\theta+dd^c\varphi) \to T_{\alpha}$$ in the sense of currents as $n\to \infty.$
\end{prop}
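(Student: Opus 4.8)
The plan is to reduce the statement to the capacity criterion of Proposition \ref{cap}, i.e.\ to verify that a function $\varphi$ with finite $\chi$-energy (for a weight $\chi$ with $|\chi(-t)|=O(t^p)$, $p>k-1$) automatically satisfies $t^k\,Cap_\theta(\varphi-\vmin<-t)\to 0$ as $t\to\infty$. Once this is established, the convergence $\lambda^{-n}(f^n)^*(\theta+dd^c\varphi)\to T_\alpha$ follows immediately from Proposition \ref{cap}. So the whole argument is a capacity estimate for finite-energy potentials, and no further dynamics is involved beyond citing Proposition \ref{cap}.

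After normalizing $\varphi\le\vmin\le 0$, I would set $K_t:=\{\varphi-\vmin<-t\}$ and estimate $Cap_\theta(K_t)$ from above. The standard mechanism (as in \cite[\S2]{BEGZ}, \cite{GZ1}) is: for any competitor $u\in\psh$ with $\vmin-1\le u\le\vmin$, split $\int_{K_t}MA(u)$ using that on $K_t$ one has $\varphi<\vmin-t$, hence $\varphi_s:=\max(\varphi,\vmin-s)$ agrees with $\vmin-s$ there for $s<t$; then a comparison-principle/integration-by-parts argument bounds $\int_{K_t}MA(u)$ in terms of the energy of $\varphi$. Concretely, I expect an inequality of the shape
$$
t^k\,Cap_\theta(K_t)\ \le\ C\,E_\chi(\varphi)\,\big/\,|\chi(-t)|^{\,?}\quad\text{or}\quad t^k\,Cap_\theta(K_t)\le C\,\frac{t^k}{|\chi(-t)|}\,,
$$
and since $|\chi(-t)|$ grows like $t^p$ with $p>k-1$—in fact one needs the genuine superlinear gain coming from convexity of $\chi$, which forces $|\chi(-t)|/t\to\infty$ and more precisely controls tail integrals $\int_t^\infty Cap_\theta(K_s)\,ds$—the right-hand side tends to $0$. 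The cleanest route is probably to first show $\int_0^\infty s^{k-1}Cap_\theta(K_s)\,ds<\infty$ using finiteness of the $\chi$-energy (this is essentially the content of the relation between weighted energy classes and capacity sublevel-set decay in \cite{BEGZ,GZ1}), and then deduce $s^kCap_\theta(K_s)\to 0$ along a subsequence; monotonicity of $s\mapsto Cap_\theta(K_s)$ upgrades this to a genuine limit.

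The main obstacle is the quantitative capacity estimate itself: one must convert "finite $\chi$-energy with $|\chi(-t)|=O(t^p)$, $p>k-1$" into the decay $t^kCap_\theta(K_t)\to0$, and the exponent threshold $p>k-1$ is exactly what makes the tail integral $\int^\infty s^{k-1}Cap_\theta(K_s)\,ds$ converge—any weaker growth of $\chi$ would fail. This requires carefully invoking the partial-comparison/integration-by-parts estimates for non-pluripolar Monge–Ampère operators on big classes from \cite{BEGZ} (e.g.\ the analogue of \cite[Lemma 2.3, Prop.\ 2.10]{BEGZ}) together with the defining formula for $E_\chi$, and checking that all the cross terms $\langle T^j\wedge T_{min}^{k-j}\rangle$ in the energy expansion are controlled uniformly. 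Everything else—uniform integrability of $\{\lambda^{-n}(\varphi-\vmin)\circ f^n\}$, the volume-contraction estimate, and passage to the Green current—is already packaged in Corollary \ref{UI}, Theorem \ref{vol}, and Proposition \ref{cap}, so once the capacity decay is in hand the proof closes in one line.
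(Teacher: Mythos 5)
Your reduction to Proposition \ref{cap} is exactly what the paper does: the whole content of the proof is a pointwise capacity estimate for sublevel sets of finite-energy potentials, after which Proposition \ref{cap} closes the argument. But the key lemma that does this work (Lemma \ref{tcap} in the paper), namely
$$Cap_{\theta}(\varphi-\vmin<-t)\ \le\ \frac{C_{\varphi}}{t\,|\chi(-t)|}\qquad (t>1),$$
is the entire substance of the proposition, and your proposal leaves it as a sketch: you name the right mechanism (comparison principle / integration by parts against the non-pluripolar MA), but you do not carry it out, and both inequality shapes you float are off by a factor of $t$ from what is actually needed. With the correct bound, $t^{k}Cap_{\theta}(K_t)\le C t^{k-1}/|\chi(-t)|\to 0$ once $|\chi(-t)|$ grows faster than $t^{k-1}$, which is the role of the hypothesis $p>k-1$; your version $t^{k}/|\chi(-t)|$ would need $p>k$. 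Your tail-integral-plus-monotonicity route does work as an alternative (indeed monotonicity of $s\mapsto Cap_{\theta}(K_s)$ upgrades subsequential decay to a genuine limit), but it is more indirect than the direct pointwise bound and still hinges on the same unestablished estimate.

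More importantly, there is a genuine missing ingredient. When you apply the comparison principle to $\rho_t:=t^{-1}\varphi+(1-t^{-1})\vmin$ and expand $MA(\rho_t)$, the $j=0$ term is $\int_{\{\varphi-\vmin<-2t\}}MA(\vmin)$, and this cannot be absorbed into the energy of $\varphi$: it needs to be controlled by the Lebesgue volume of the sublevel set via Skoda's estimate, and for that one must know that $MA(\vmin)$ has $L^{\infty}$ density with respect to Lebesgue measure. This is a non-trivial regularity theorem from \cite{BeDe} (for the minimally singular potential of a big class), and the paper explicitly relies on it. Without invoking it, the $j=0$ term in the multinomial expansion of $MA(\rho_t)$ is uncontrolled and the capacity estimate does not go through. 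So your plan is correct in outline, but you should (i) write down the comparison-principle expansion explicitly, (ii) bound the pure $MA(\vmin)$ term using \cite[Corollary 2.5]{BeDe} together with Skoda-type volume decay, and (iii) bound the remaining $j\geq 1$ terms by Chebyshev using finiteness of the $\chi$-energy, yielding the $\tfrac{1}{t|\chi(-t)|}$ decay. One further caution: convexity of the admissible weight alone does not force $|\chi(-t)|/t\to\infty$; the superlinear growth with exponent $p>k-1$ is precisely the hypothesis of the proposition, not a consequence of convexity.
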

\begin{proof}
The proof follows from Proposition \ref{cap} and the Lemma \ref{tcap} which is variation of \cite[Lemma 5.1]{GZ1} together with the regularity result from \cite{BeDe}. 
\end{proof}
\begin{lem}\label{tcap}
If $\varphi \in \mathcal{E}_{\chi}(X,\theta)$ then there exists $C_{\varphi}>0$ such that $$Cap_{\theta}(\varphi-\vmin<-t) \leq \frac{C_{\varphi}}{t|\chi(-t)|}$$
for every $t>1.$ 
\end{lem}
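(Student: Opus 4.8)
The goal is to bound the Monge–Ampère capacity of the sublevel set $\{\varphi-\vmin<-t\}$ when $\varphi$ lies in a weighted energy class $\mathcal{E}_\chi(X,\theta)$. This is the natural analogue of the classical estimate of Guedj–Zeriahi for the relative capacity on domains in $\mathbb{C}^k$, transplanted to the big-class setting of \cite{BEGZ}. First I would recall that by the regularity result of \cite{BeDe} (or by \cite[Theorem 2.17]{BEGZ}), the extremal function $V^*_{K,\theta}$ of a non-pluripolar compact set $K$ lies in the energy class $\mathcal{E}(X,\theta)$, so that its non-pluripolar Monge–Ampère is well defined and $\int_X MA(V^*_{K,\theta})=vol(\alpha)$, and moreover $MA(V^*_{K,\theta})$ is supported on $K$ (up to a pluripolar set) — these are the facts that make the capacity comparison arguments work in full generality.

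\textbf{Key steps.} Write $K_t:=\{\varphi-\vmin<-t\}$ and fix a competitor $u$ for the capacity, i.e. $u\in Psh(X,\theta)$ with $\vmin-1\le u\le\vmin$. On $K_t$ we have $\varphi-\vmin<-t$, hence $\varphi<\vmin-t\le u-t+1$, so that $K_t\subset\{\varphi<u-(t-1)\}$ for $t>1$. I would then use the standard comparison-principle / partial-integration trick in the spirit of \cite[Lemma 4.2]{BEGZ} and \cite[Lemma 5.1]{GZ1}: for $s>0$,
$$
\int_{\{\varphi-\vmin<-t\}} MA(u)\ \le\ \int_{\{\varphi-\vmin<-s\}} MA(u) ,
$$
and one interpolates the weight $\chi$ by splitting the level $t$ into a geometric ladder $t, 2t, 4t,\dots$ and summing the mass contributions against $|\chi(-t)|$. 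More precisely, the energy hypothesis $E_\chi(\varphi)<\infty$ gives, via the definition of $E_\chi$ in terms of the mixed masses $\langle T^j\wedge T_{min}^{k-j}\rangle$, a uniform bound
$$
\sum_{j=0}^{k}\int_X(-\chi)(\varphi-\vmin)\,\langle T^j\wedge T_{min}^{k-j}\rangle\ \le\ C_\varphi ;
$$
combining this with the comparison of $MA(u)$ against these mixed masses (again as in \cite[Lemma 4.2]{BEGZ}, using $\vmin-1\le u\le\vmin$ to control $\theta+dd^c u$ by $T_{min}$ plus a bounded term) yields
$$
|\chi(-t)|\cdot t\cdot Cap_\theta(K_t)\ \le\ C_\varphi .
$$
The factor $t$ comes from the elementary inequality $|\chi(-2t)|\ge$ a fixed multiple of $|\chi(-t)|$ together with telescoping the dyadic sum; the factor $|\chi(-t)|$ comes directly from Chebyshev applied to the finite energy integral.

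\textbf{Main obstacle.} The delicate point is making the comparison $\int_{K_t}MA(u)\lesssim$ (mixed masses of $\varphi$ against $T_{min}$) rigorous when $\varphi$ has poles and $u$ has minimal singularities but need not be bounded — one cannot integrate by parts naively. The remedy is the usual one: work first with the canonical approximants $\varphi_s=\max(\varphi,\vmin-s)$, which have the same singularities as $\vmin$ and for which all products are Bedford–Taylor products with bounded potentials, establish the estimate with constants independent of $s$ using plurifine locality (the products $MA(\varphi_s)$ and the mixed masses agree on $\{\varphi>\vmin-s\}$), and then let $s\to\infty$ invoking the monotone-convergence properties of non-pluripolar products from \cite{BEGZ} and the definition $E_\chi(\varphi)=\sup_s E_\chi(\varphi_s)$. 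Once that limiting procedure is controlled, the dyadic summation is routine and produces exactly the bound $Cap_\theta(\varphi-\vmin<-t)\le C_\varphi/(t\,|\chi(-t)|)$ claimed in the lemma.
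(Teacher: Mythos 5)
Your outline has the right general shape (comparison principle, mixed masses, Chebyshev against the weight), but the step that actually produces the estimate is missing, and the substitutes you propose do not work. The whole lemma rests on testing the comparison principle with the convex combination $t^{-1}\varphi+(1-t^{-1})\vmin$ against a competitor $\psi$ with $\vmin-1\le\psi\le\vmin$, via the sandwich of sublevel sets
$$\{\varphi<\vmin-2t\}\subset\{t^{-1}\varphi+(1-t^{-1})\vmin<\psi-1\}\subset\{\varphi<\vmin-t\},$$
which is legitimate because $\varphi\in\energyc$ has full Monge--Amp\`ere mass (\cite[Corollary 2.3]{BEGZ}); the multinomial expansion of $MA\bigl(t^{-1}\varphi+(1-t^{-1})\vmin\bigr)$ is then what places the factor $t^{-j}$, $j\ge1$, in front of each mixed mass $\langle(\theta+dd^c\varphi)^{j}\wedge T_{min}^{k-j}\rangle$, after which Chebyshev against $|\chi(-t)|$ and finiteness of the $\chi$-energy finish those terms. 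In your write-up this mechanism is replaced by two things that do not do the job: the displayed inequality $\int_{\{\varphi-\vmin<-t\}}MA(u)\le\int_{\{\varphi-\vmin<-s\}}MA(u)$, which is mere monotonicity of the sets and contains no comparison between $u$ and $\varphi$; and the claim that $\vmin-1\le u\le\vmin$ lets you ``control $\theta+dd^cu$ by $T_{min}$ plus a bounded term,'' which is false --- an $L^\infty$ bound on the difference of potentials gives no pointwise comparison of the curvature currents, and this is precisely why the estimate must be routed through the comparison principle. The factor $t$ likewise cannot be extracted from a dyadic ladder together with $|\chi(-2t)|\ge|\chi(-t)|$; it comes from the coefficients $t^{-j}$ in the expansion.

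Even granting the expansion, your scheme leaves the $j=0$ term untreated: $\int_{\{\varphi-\vmin<-t\}}MA(\vmin)$ carries no factor $t^{-1}$, and Chebyshev against the energy bounds it only by $C/|\chi(-t)|$, which is weaker than the claimed $C_{\varphi}/(t|\chi(-t)|)$. The paper disposes of it separately: $MA(\vmin)$ has $L^{\infty}$ density with respect to Lebesgue measure by \cite[Corollary 2.5]{BeDe}, and the uniform Skoda estimate gives $Vol(\varphi-\vmin<-t)\le A\exp(-Bt)$, so this term decays exponentially and is absorbed into the stated bound; your appeal to \cite{BeDe} concerns instead the regularity of $V^*_{K,\theta}$, which is the input for Lemma \ref{l}, not for this lemma. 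By contrast, the approximation and plurifine-locality discussion in your last paragraph is not where the difficulty lies: once $\varphi$ has full mass, \cite[Corollary 2.3]{BEGZ} applies directly, and the mixed masses on sublevel sets can be handled either via \cite[Proposition 2.8]{BEGZ} (as in the paper) or, as you suggest, by Chebyshev applied term by term to the energy --- that part of your proposal is a harmless variant.
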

\begin{proof}
Shifting $\varphi$ we may assume that $\varphi\leq \vmin$. Let $\psi \in \psh$ such that $\vmin-1\leq \psi \leq \vmin$ then for $t>1$
$$ \{\varphi<\vmin-2t\}\subset \{t^{-1}\varphi + (1-t^{-1})\vmin<\psi-1\}\subset \{\varphi<\vmin-t\}.$$
Since $\varphi$ has full Monge-Amp\`ere comparison principle \cite[Corollary 2.3]{BEGZ} yields 
\begin{eqnarray*}
\int_{\{\varphi-\vmin<-2t\}} MA(\psi) &\leq & \int_{\{\varphi-\vmin<-t\}} MA(t^{-1}\varphi + (1-t^{-1})\vmin) \\
& \leq & \int_{\{\varphi-\vmin<-2t\}}  MA(\vmin) + \sum_{j=1}^k \binom{k}{j} t^{-j}  \int_{\{\varphi-\vmin<-2t\}} \langle(\theta+dd^c\varphi)^{j}\wedge(\theta+dd^c\vmin)^{k-j}\rangle \\
& \leq & A\exp(-Bt) + \frac{C} {t} MA(\varphi)(\{\varphi-\vmin<-t\})
\end{eqnarray*}
where $A,B>0$ and the the last inequality follows from the fact $MA(\vmin)$ has $L^{\infty}$-density with respect to Lebesgue measure (\cite[Corollary 2.5]{BeDe}) and \cite[Proposition 2.8]{BEGZ}.  Then by Chebyshev's inequality we obtain
$$Cap(\varphi-\vmin<-t)\leq A\exp(-Bt)+ \frac{1}{t|\chi(-t)|}\int_X|\chi(\varphi-\vmin)|MA(\varphi)$$
since $\varphi \in \energyc$ the later integral is finite.
\end{proof}
%\begin{cor}
%Let $X$ be a compact K\"ahler surface and $f:X\to X$ be a dominant 1-regular meromorphic map. If $\lambda_1^2>\lambda_2$ then $\frac{1}{\lambda_1^n} (f^n)^*(\theta+ dd^c\varphi) \to T_{\alpha}$ for every $\varphi \in \mathcal{E}^{p}(X,\theta)$ with $p>1.$
%\end{cor}
%\begin{rem}
 %We remark that the condition $$\int_X|\chi(\varphi-\vmin)|MA(\varphi)<\infty$$
 %is not enough to conclude that $\theta+dd^c\varphi$ equidistributes towards the Green current. Indeed, let $f:\Bbb{P}^1\to \Bbb{P}^1$ be a holomorphic map of degree $\lambda\geq 2$ with a totally invariant point $p$. Then $\delta_p=\omega_{FS}+dd^c\varphi_p$ is a positive closed current with $\int_{\Bbb{P}^1}(\varphi_p)MA(\delta_p)=0$ but $\frac{1}{\lambda^n}(f^n)^*\delta_p\not\to \mu_f$.
 %\end{rem}

\subsection{Minimally Separating Maps}\label{min}
Let $f:X\to X$ be a bimeromorphic map. We denote the indeterminacy locus of $f^{-1}$ by $I^-$ and we set $\mathcal{I_1^-}=\bigcup_{n=1}^{\infty}f^{n}(I^-).$ %One can also define the analogues sets for the mapping $f$. 
Following \cite{Di}, we say that $f$ is \textit{minimally separating} if $\mathcal{I_1^+} \cap \mathcal{I_1^-}=\emptyset.$ 

\begin{thm}\label{SM}
Let $f:X\to X$ be a minimally separating map. We assume that $\lambda=\lambda_1(f)>1$ is simple and the corresponding normalized eigenvector $\alpha \in \bigc$. If $\mathcal{I_1^-} \subset Amp(\alpha)$ then condition $(\star)$ holds. Moreover, $\lim\frac{1}{\lambda^n}(f^n)^*\eta=cT_{\alpha}$ for every smooth closed $(1,1)$ form $\eta$ on $X$ where $c>0$ depends only on the class $\{\eta\}$.  
\end{thm}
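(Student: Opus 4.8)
The plan is to verify the dynamical condition $(\star)$, namely $\lambda^{-n}\vmin\circ f^n\to 0$ in $L^1(X)$, and then to invoke Theorem \ref{B} to obtain the convergence $\lambda^{-n}(f^n)^*\eta\to cT_\alpha$; the constant $c$ then arises because $\{\eta\}=c\,\alpha$ modulo the part of $H^{1,1}(X,\mathbb{R})$ on which the iterates of $f^*$ decay faster than $\lambda^n$, which follows from $\lambda$ being a simple eigenvalue (compare the argument in Theorem \ref{B}). So the heart of the matter is $(\star)$. First I would fix a smooth representative $\theta\in\alpha$ and, using Corollary \ref{UI} applied to the qpsh function $\vmin$, reduce $(\star)$ to showing that for every $t>0$
$$
\mathrm{Vol}\bigl(\{x\in X:\ \tfrac{1}{\lambda^n}\vmin\circ f^n(x)<-t\}\bigr)\longrightarrow 0\qquad(n\to\infty),
$$
exactly as in the proof of Proposition \ref{normal} and of Theorem \ref{A}. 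Write $\Omega_{n,t}$ for this sublevel set; the key containment is $f^n(\Omega_{n,t})\subset\{\vmin<-t\lambda^n\}$.

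Next I would exploit the hypothesis $\mathcal{I}_1^-\subset Amp(\alpha)$ together with minimal separation. The point is to control the Lelong numbers of $\vmin$ along the orbit region reached by $f^n(\Omega_{n,t})$. Since $f$ is minimally separating, $\mathcal{I}_1^+\cap\mathcal{I}_1^-=\emptyset$, and one should argue — as in Diller's work \cite{Di} — that the images $f^n(U)$ of a relatively compact $U\subset X\setminus\overline{\mathcal{I}_1^+}$ stay away from $\mathcal{I}_1^+$, so that on $X\setminus\overline{\mathcal{I}_1^+}$ the relevant potential behaves well; meanwhile $E_+(T_\theta)\subset\mathcal{I}_1^+$ by \cite[Theorem 3.3]{B}, so $\nu(T_\theta,x)=0$ off $\mathcal{I}_1^+$, and the condition $\mathcal{I}_1^-\subset Amp(\alpha)$ guarantees that the exceptional locus of $\alpha$ does not accumulate mass where the backward orbit lives. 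Combining Theorem \ref{vol} with the Kiselman–Skoda type estimate \cite[Theorem 3.1]{Ki} (Lelong number zero gives, for every $A>0$, a constant $B>0$ with $\mathrm{Vol}(\vmin<-s)\le B\exp(-As)$ on the good region), one gets
$$
\bigl(C_1\,\mathrm{Vol}(U\cap\Omega_{n,t})\bigr)^{C_2\lambda^n}\le B\exp(-At\lambda^n),
$$
and letting $A\to\infty$ forces $\mathrm{Vol}(U\cap\Omega_{n,t})\to 0$. A covering/exhaustion argument over $X\setminus\overline{\mathcal{I}_1^+}$, plus $\mathrm{Vol}(\overline{\mathcal{I}_1^+})=0$ (which holds since $\mathcal{I}_1^+$ is a countable union of analytic sets of codimension $\ge 1$), then yields $(\star)$.

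I expect the main obstacle to be the geometric bookkeeping in the second step: precisely matching the region $f^n(\Omega_{n,t})$ against the locus where $\vmin$ has positive Lelong numbers. Unlike the normal case (Proposition \ref{normal}), where a single neighborhood $V$ of $\overline{\mathcal{I}_1}$ is avoided by all forward images, here one must use minimal separation to show that forward orbits of points outside $\overline{\mathcal{I}_1^+}$ avoid a neighborhood of $\overline{\mathcal{I}_1^+}$ \emph{uniformly}, and simultaneously use $\mathcal{I}_1^-\subset Amp(\alpha)$ to ensure the target sublevel sets $\{\vmin<-t\lambda^n\}$ are, outside that neighborhood, genuinely small by Skoda. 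Care is also needed because $\vmin$ may still have a nonempty polar set even when $\alpha\in\bigc\cap\nef$ (as the remark after Theorem \ref{A} notes), so one cannot simply assert Lelong numbers vanish everywhere; the hypotheses are calibrated exactly so that the bad set is swept into $\overline{\mathcal{I}_1^+}$, which has measure zero. Once $(\star)$ is in place, the remaining assertions are immediate from Theorem \ref{B}.
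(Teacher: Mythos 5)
Your reduction of $(\star)$ to showing $\mathrm{Vol}(\Omega_{n,t})\to 0$ via Corollary \ref{UI} is fine, and once $(\star)$ is in hand the convergence $\lambda^{-n}(f^n)^*\eta\to cT_\alpha$ does follow (though the paper derives the constant $c$ from the extremality statement in Theorem \ref{B}(2) applied to K\"ahler forms dominated by $T_K$, rather than from a spectral decomposition). The genuine gap is in the middle step. You try to transplant the argument of Proposition \ref{normal} by asserting that, because $f$ is minimally separating, ``forward orbits of points outside $\overline{\mathcal{I}_1^+}$ avoid a neighborhood of $\overline{\mathcal{I}_1^+}$ uniformly.'' That is exactly the definition of normality, which minimal separation does \emph{not} imply: minimal separation only says $\mathcal{I}_1^+\cap\mathcal{I}_1^-=\emptyset$, and gives no control over forward orbits of generic points, which may perfectly well accumulate on $\mathcal{I}_1^+$. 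Without that avoidance claim there is no region on which your Skoda-type estimate applies, and the argument stalls. (Also, you work with $\vmin$, whose sublevel sets $\{\vmin<-s\}$ need not shrink to any analytic set, and you assert $\mathrm{Vol}(\overline{\mathcal{I}_1^+})=0$, which is not among the hypotheses.)

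The paper's proof follows a different route that does not require orbit avoidance. It works not with $\vmin$ but with a K\"ahler current $T_K=\theta+dd^c\varphi_K$ with analytic singularities realizing $Amp(\alpha)=X\setminus E_+(T_K)$, so that for large $s$ the sublevel set $\{\varphi_K<-s\}$ \emph{is} confined to a small neighborhood of $E_+(T_K)$. One then argues by contradiction: if $\mathrm{Vol}(\Omega_{n+m,t})$ stayed bounded below along a subsequence, then $f^{n+m}(\Omega_{n+m,t})\subset\{\varphi_K<-t\lambda^{n+m}\}$ is forced, for $m$ large, into a neighborhood of $E_+(T_K)$, which by minimal separation together with $\mathcal{I}_1^-\subset Amp(\alpha)$ is disjoint from a neighborhood of $I^-\cup f(I^-)\cup\dots\cup f^{n-1}(I^-)$. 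The crucial new ingredient you miss is Lemma \ref{jac}: if $|Jac_\omega(f)(p)|^2$ is small then $f(p)$ must be near $I^-$. Running its contrapositive along the last $n$ steps of the orbit gives a uniform Jacobian lower bound $\delta$ there, which upgrades Theorem \ref{vol} to $\mathrm{Vol}(f^{n+m}(\Omega_{n+m,t}))\geq \delta^n\,(C_1\mathrm{Vol}(\Omega_{n+m,t}))^{C_2\lambda^m}$; comparing this against the exponential decay of $\mathrm{Vol}(\varphi_K<-t\lambda^{n+m})$ and letting $m\to\infty$ yields the contradiction. In short, the hypotheses are used not to keep orbits away from $\mathcal{I}_1^+$, but to pin the tail of the orbit away from $I^-$, which is what controls the Jacobian and hence the volume.
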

\begin{proof}
It is easy to see that a minimally separating bimeromorphic map is always 1-regular. We fix a smooth representative $\theta\in \alpha$ and let $T_K=\theta +dd^c \varphi_K$ be a K\"ahler current such that $Amp(\alpha)= X \setminus E_+(T_K).$ We first show that $\frac{1}{\lambda^n}\varphi_K\circ f^n \to 0$ in $L^1(X)$ which clearly implies the condition $(\star).$ By Corollary \ref{UI} it is enough to show that $Vol(\Omega_{n,t})\to 0$ as $n \to \infty$ where 
$$\Omega_{n,t}:=\{x\in X: \frac{1}{\lambda^n}\varphi_K\circ f^n(x)<-t\}.$$ Assuming the contrary we will drive a contradiction: suppose that there exists $r>0$ and $n_k \to \infty$ such that $r\leq Vol(\Omega_{n_k,t})$. Renumbering the sequence we may assume that $n_k=n$. We fix $n$ large enough such that $\exp(-C\lambda^n)<r$ where $C>0$ will be defined later. Since $f$ is minimally separating and $E_+(T_K)$ is an analytic set by hypothesis there exists $\epsilon=\epsilon(n)>0$ small enough and open neighborhoods  $U_{\epsilon}(E_+(T_K))$ and $U_{\epsilon}((I^-) \cup f(I^-) \dots \cup f^{n-1}(I^-))$ of $E_+(T_K)$ and $(I^-) \cup f(I^-) \dots \cup f^{n-1}(I^-)$ respectively such that $$U_{\epsilon}(E_+(T_K))\ \cap\ U_{\epsilon}((I^-) \cup f(I^-) \dots \cup f^{n-1}(I^-))=\emptyset.$$
\begin{lem} \label{jac}
There exists $\delta=\delta(\epsilon)>0$ such that if $p\in X\setminus I_f$ with $$|Jac_{\omega}(f)(p)|^2<\delta$$ then $f(p) \in U_{\epsilon}(I^-).$ 
\end{lem}
\begin{proof}
Let $B(f(p),s)$ denote a small ball around $f(p).$ Assume that $f(p) \not\in U_{\epsilon}( I^-)$ then 
\begin{eqnarray*}
 |Jac_{\omega}(f)(p)|^2 &=& \lim_{s \to 0}\frac{Vol(B(f(p),s)}{Vol(f^{-1}(B(f(p),s)))}\\
 &\geq& \lim_{s\to 0}(\frac{s}{\max(\{dist(f^{-1}(x),p): x\in B(f(p),s)\})})^{2k}\\
 &\geq& \frac{1}{||Df^{-1}(p)||^{2k}} \\
 &\geq & dist(f(p), I^-)^q\\ 
 &\geq& \epsilon^q = \delta
 \end{eqnarray*}
 where the fourth inequality follows from \cite[Lemma 2.1]{DD} and $q>0$ independent of $p.$ 
 \end{proof}
 Now we choose $m$ large such that $$f^{n+m}(\Omega_{n+m,t})\subset \{\varphi_K<-t\lambda^{n+m}\} \subset U_{\epsilon}(E_+(T_K))$$ where $$\Omega_{n,t}:=\{\varphi_K \circ f^n<-t \lambda^n\}.$$ Since $U_{\epsilon}(E_+(T_K)) \cap U_{\epsilon}((I^-) \cup f(I^-) \dots \cup f^{n-1}(I^-))=\emptyset$ it follows from Theorem \ref{vol} and Lemma \ref{jac} that
 $$Vol(f^{n+m}(\Omega_{n+m,t})) \geq \delta^n(C_1Vol(\Omega_{n+m,t}))^{C_2\lambda^{m}}.$$
 On the other hand, since $f^{n+m}(\Omega_{n+m,t})\subset \{\varphi_K<-t\lambda^{n+m}\}$ there exists constants $A,B>0$ independent of $n$ and $m$ such that 
 $$A\exp(-B\lambda^{n+m}) \geq Vol(\varphi_K<-t\lambda^{n+m}).$$
 Combining these two inequalities and letting $m\to \infty$ we get
 $$r \leq \liminf_{m\to \infty} Vol(\Omega_{m,t})\leq \exp(-C\lambda^n) $$
where $C=\frac{B}{C_2}>0$ is the positive constant. This arrises a contradiction and we conclude that $\frac{1}{\lambda^n}\varphi_K\circ f^n \to 0$ in $L^1(X)$.\\ \indent
Next,  we fix a K\"ahler form $\beta$ such that $T_K\geq \beta$. Any limit point $S$ of the sequence $\{\lambda^{-n}(f^n)^*\beta\}$ is a positive closed current in $c\alpha$ for some $c>0$ depending only on $\{\beta\}.$ This implies that $S\leq cT_{\alpha}$. Hence, by Theorem \ref{B} (2) we get $S= cT_{\alpha}$.\\ \indent 
 Since $X$ is compact for any two K\"ahler forms $\omega_1, \omega_2$ on $X$ there exists a constant $C>0$ such that $\frac{1}{C}\omega_1\leq \omega_2\leq C\omega_1$ and this implies the weak convergence under pull-backs towards Green current for any K\"ahler form. Finally, any smooth form $\eta$ can be written as a difference of two K\"ahler forms hence the assertion follows.
 \end{proof}
The following result is a consequence of Crofton's formula applied as in Theorem \ref{finite}: 
\begin{cor}
Let $f$ be as in Theorem \ref{SM}. If X is projective then for Lebesgue almost every hyperplane section $H$ $$\frac{1}{\lambda^n}(f^n)^*[H]\to cT_{\alpha}$$
where $c>0$ depends only on the imbedding $X \hookrightarrow \Bbb{P}^N.$  
\end{cor}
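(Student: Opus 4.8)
The plan is to reduce the statement to the smooth case already settled in Theorem \ref{SM} by means of Crofton's formula, in exactly the spirit of the proof of Theorem \ref{finite}. Fix the projective embedding $X\hookrightarrow \Bbb{P}^N$, write $\omega:=\omega_{FS}|_X$ for the restriction of the Fubini--Study form (a smooth Kähler form on $X$ representing the hyperplane class $h\in H^{1,1}(X,\Bbb{R})$), and let $\mu$ be the unitary--invariant probability measure on the dual projective space $(\Bbb{P}^N)^{\ast}$ parametrizing hyperplanes. Crofton's formula gives $\int_{(\Bbb{P}^N)^{\ast}}[H\cap X]\,d\mu(H)=\omega$ in the sense of currents, so by the $dd^c$--lemma each hyperplane section can be written $[H\cap X]=\omega+dd^c\varphi_H$ with $\varphi_H\in L^1(X)$, unique up to an additive constant. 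Writing $H=\{\ell=0\}$ for a unit linear form $\ell$, a direct computation in homogeneous coordinates shows $\varphi_H(x)=\tfrac12\log\bigl(|\ell(x)|^2/\|x\|^2\bigr)\le 0$, so we normalize $\varphi_H\le 0$; moreover $H\mapsto\varphi_H(x)$ is psh in the natural affine charts of $(\Bbb{P}^N)^{\ast}$, and $-\varphi_H(\cdot)$ is $\mu$--integrable in $H$ with $\int_{(\Bbb{P}^N)^{\ast}}\varphi_H(x)\,d\mu(H)=c'$ a finite negative constant independent of $x$ (apply $dd^c$ and use that a pluriharmonic $L^1$ function on a compact Kähler manifold is constant; finiteness is the $\mu$--integrability of the logarithm of the distance to a hyperplane).

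Since the pull-back of a positive closed $(1,1)$ current is obtained by pulling back local potentials, one has $\tfrac{1}{\lambda^n}(f^n)^*[H\cap X]=\tfrac{1}{\lambda^n}(f^n)^*\omega+dd^c\bigl(\tfrac{1}{\lambda^n}\varphi_H\circ f^n\bigr)$. By Theorem \ref{SM} the first term converges to $c\,T_{\alpha}$, where $c=c(h)>0$ depends only on the class $h=\{\omega\}$, hence only on the embedding $X\hookrightarrow\Bbb{P}^N$. It therefore suffices to prove that $\tfrac{1}{\lambda^n}\varphi_H\circ f^n\to 0$ in $L^1(X)$ for $\mu$--almost every $H$; as $\mu$ lies in the Lebesgue measure class, this is precisely the assertion.

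To establish this $L^1$--convergence I would run the Fubini--Tonelli argument from the proof of Theorem \ref{finite}. All integrands being non-negative, Tonelli's theorem gives
$$\int_{(\Bbb{P}^N)^{\ast}}\Bigl(\sum_{n=0}^{\infty}\tfrac{1}{\lambda^n}\int_X(-\varphi_H\circ f^n)\,dV\Bigr)\,d\mu(H)=\sum_{n=0}^{\infty}\tfrac{1}{\lambda^n}\int_X\Bigl(\int_{(\Bbb{P}^N)^{\ast}}(-\varphi_H(f^n(x)))\,d\mu(H)\Bigr)dV(x)=\frac{-c'\lambda}{\lambda-1}<\infty,$$
where we used $\int_{(\Bbb{P}^N)^{\ast}}(-\varphi_H(y))\,d\mu(H)=-c'$ for every $y\in X$ together with $\int_X dV=1$. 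Consequently the inner sum is finite for $\mu$--a.e.\ $H$, which forces $\varphi_H\circ f^n\in L^1(X)$ for all $n$ and $\tfrac{1}{\lambda^n}\|\varphi_H\circ f^n\|_{L^1(X)}\to 0$ for those $H$. Applying $dd^c$ and combining with Theorem \ref{SM} yields $\tfrac{1}{\lambda^n}(f^n)^*[H\cap X]\to c\,T_{\alpha}$, as desired.

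The potentially delicate inputs — the local description of $\varphi_H$, the plurisubharmonicity of $H\mapsto\varphi_H(x)$, and the convergence $\tfrac{1}{\lambda^n}(f^n)^*\omega\to cT_\alpha$ — are all available: the first two mirror the treatment of the sections $\varphi_s$ in Theorem \ref{finite}, and the third is the second half of Theorem \ref{SM}. So I do not expect any genuine obstacle here; the only point requiring care is fixing the precise normalization of Crofton's formula over $(\Bbb{P}^N)^{\ast}$ and tracking the constant $c$ so that it is manifestly an invariant of the embedding. Note also that, unlike in Theorem \ref{finite}, no upgrade from ``almost everywhere'' to ``outside a pluripolar set'' is needed, which shortens the argument.
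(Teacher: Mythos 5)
Your argument is exactly what the paper intends: it applies Crofton's formula to decompose a hyperplane section as $[H\cap X]=\omega+dd^c\varphi_H$ with $\omega$ the restricted Fubini--Study form, invokes the second half of Theorem \ref{SM} to handle $\lambda^{-n}(f^n)^*\omega$, and runs the same Fubini--Tonelli estimate on $\sum_n\lambda^{-n}\|\varphi_H\circ f^n\|_{L^1(X)}$ as in Theorem \ref{finite} to kill the $dd^c$ term for $\mu$-a.e.\ $H$. The paper's own ``proof'' is just the one-line remark that the corollary follows from Crofton's formula as in Theorem \ref{finite}, and your write-up is a faithful, correct expansion of that.
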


\section{Proof of Theorem \ref{class}}
%We stress that if $X$ is a compact K\"ahler surface then a class $\alpha \in \nef$ has numerical dimension one if and only if $\alpha$ is not big.  

%\begin{thm} \label{class}
%Let $X$ be a compact K\"ahler surface and $f:X\to X$ be a dominant meromorphic map. If $\lambda_1(f)>\lambda_2(f)$ and $\alpha_f\in \bigc$ then $X$ is rational.
%\end{thm}
\begin{proof}[Proof of Theorem \ref{class}]
Since $f^*$ preserves the nef cone we may and we do assume that $\alpha \in \nef.$ By $\lambda_1:=\lambda_1(f)>d_{top}(f),$ it follows from \cite[Theorem 2.14]{G} that either $kod(X)=0\ \text{or}\ X$ is rational. \\ \indent
If $kod(X)=0$ and $X$ is minimal surface then by Kodaira-Enrique classification of compact complex surfaces \cite{BPV}, $K_X$ is a nef-divisor and $12K_X\equiv0$. Then by Riemann-Hurwitz formula \cite{GH} we have 
$$K_X=R_f+f^*K_X$$
where $R_f$ denotes the ramification divisor. Thus, we deduce that $R_f=0.$ This implies that the exceptional locus $\mathcal{E}_f=\emptyset.$ Then the push-pull formula \cite[Theorem 3.3]{DF} yields that $\alpha_f^2=0.$ Since $\alpha \in \nef$ by \cite{DP} we have $vol(\alpha_f)=\alpha_f^2.$ Therefore, $\alpha_f$ is not big.\\ \indent
If $X$ is not minimal then there exists a proper holomorphic map (induced by a finite sequence of blow-ups) $\pi: X \to X'$ where $X'$ is minimal. Moreover, $f$ induces a meromorphic map $g:X' \to X'$ where $g=\pi \circ f \circ \pi^{-1}$. Since the dynamical degrees are bimeromorphic invariants we obtain that $\lambda_1=\lambda_1(g)>d_{top}(g)=d_{top}(f)$. Thus, by \cite[Theorem 0.3]{DF}, $\lambda_1$ is a simple eigenvalue of $g^*.$ We denote the invariant class $\alpha_g\in \nef$ which is normalized by $\langle \alpha_g,\omega_{X'} \rangle=1$. Moreover, we claim that $\alpha_g=\pi_*\alpha_f$ up to a constant. Indeed, by push-pull formula \cite[Theorem 3.3]{DF} we have
\begin{eqnarray*}
\lambda_1^{-n}(g^n)^*(\pi_*\alpha_f) & = & \lambda_1^{-n} \pi_*(f^n)^*(\pi^*\pi_*\alpha_f)\\
& = & \lambda_1^{-n} \pi_* (f^n)^*(\alpha_f+E) \\
& = & \pi_*\alpha_f + \pi_*( \lambda_1^{-n} (f^n)^*E)
\end{eqnarray*}
where $E$ is the class of an exceptional effective divisor supported in $\mathcal{E}(\pi).$ Since $f$ is 1-regular and $E\in \psef$ we have $$\lim_{n\to \infty}\lambda_1^{-n}(f^n)^*E=c_1\alpha_f$$ for some $c_1\geq0.$ On the other hand, since $X'$ is minimal and $kod(X')=0,$ by above argument $\mathcal{E}_g=\emptyset$ hence, $g$ is 1-regular and from $\pi_*\alpha_f\in H^{1,1}_{psef}(X',\Bbb{R})$ we infer that $$\lim_{n\to \infty}\lambda^{-n}_1(g^n)^*(\pi_*\alpha_f)=c_2\alpha_g$$ for some $c_2>0.$ Hence $\alpha_g=C\pi_*\alpha_f$ for some $C>0.$\\ \indent
 Finally, \cite[Corollary 3.4]{DF} applied to $\pi^{-1}$ we get $$\langle\alpha_g,\alpha_g\rangle= \langle \pi_*\alpha_f, \pi_*\alpha_f\rangle\geq \langle\alpha_f,\alpha_f\rangle=vol(\alpha).$$ Thus, by above argument $\alpha_f^2=0$ and $\alpha_f$ is not big.  \\ \indent
%If $kod(X)=-\infty$ and $X$ is rational then we are done. If $X$ is not rational then it is a ruled surface with a nonrational base \cite{BPV}. Since $f$ preserves the fibration by \cite[Theorem 2.4]{G} we conclude that $\lambda_2\geq \lambda_1$ which contradicts our assumption.
\end{proof}

 \bibliographystyle{amsalpha}

\bibliography{biblio}

\end{document}